\documentclass[a4paper,reqno,12pt]{amsart}
 
\usepackage{amsmath}
\usepackage{amssymb}
\usepackage{amsthm}
\usepackage[noadjust]{cite}
\usepackage{mathrsfs}
\usepackage[all,cmtip]{xy}
\usepackage{enumitem}
\usepackage{indentfirst}
\usepackage{geometry}
\usepackage{xcolor}
\allowdisplaybreaks[4]

\usepackage[colorlinks=true, citecolor=red, linkcolor=blue, filecolor=magenta, urlcolor=green,backref=page]{hyperref}

\newtheorem{thm}{Theorem}[section]
\newtheorem{prop}[thm]{Proposition}
\newtheorem{lem}[thm]{Lemma}

\newtheorem{cor}[thm]{Corollary}

\theoremstyle{definition}
\newtheorem{definition}[thm]{Definition}

\theoremstyle{remark}
\newtheorem{remark}[thm]{Remark}
\numberwithin{equation}{section}

\newcommand{\bC}{{\mathbb C}}
\newcommand{\bN}{\mathbb{N}}
\newcommand{\bP}{\mathbb{P}}
\newcommand{\bQ}{\mathbb{Q}}
\newcommand{\bR}{\mathbb{R}}
\newcommand{\bZ}{\mathbb{Z}}

\newcommand\cA{\mathcal{A}}

\newcommand\cC{\mathcal{C}}
\newcommand\cD{\mathcal{D}}
\newcommand\cE{\mathcal{E}}
\newcommand\cF{{\mathcal{F}}}
\newcommand\cG{{\mathcal{G}}}
\newcommand\cH{{\mathcal{H}}}
\newcommand\cI{{\mathcal{I}}}
\newcommand\cJ{{\mathcal{J}}}

\newcommand\cL{{\mathcal{L}}}

\newcommand\cN{{\mathcal{N}}}
\newcommand\cO{{\mathcal{O}}}
\newcommand\cP{\mathcal{P}}
\newcommand\cQ{{\mathcal{Q}}}
\newcommand\cR{{\mathcal{R}}}

\newcommand\cT{{\mathcal{T}}}

\newcommand\cV{{\mathcal{V}}}
\newcommand\cW{{\mathcal{W}}}
\newcommand\cX{{\mathcal{X}}}
\newcommand\cY{{\mathcal{Y}}}

\newcommand\fA{\mathfrak{A}}
\newcommand\fB{\mathfrak{B}}
\newcommand\fC{\mathfrak{C}}
\newcommand\fD{\mathfrak{D}}

\newcommand\KCF{K_{\mathcal{F}}}
\newcommand\KCG{K_{\mathcal{G}}}
\newcommand\KCH{K_{\mathcal{H}}}
\newcommand\KCJ{K_{\mathcal{J}}}
\newcommand\KFA{K_{\mathfrak{A}}}
\newcommand\KFB{K_{\mathfrak{B}}}
\newcommand\KFC{K_{\mathfrak{C}}}
\newcommand\KFD{K_{\mathfrak{D}}}

\newcommand\iB{B^{\inv}}
\newcommand\nB{B^{\ninv}}

\newcommand\nE{E^{\ninv}}
\newcommand\iL{L^{\inv}}
\newcommand\nL{L^{\ninv}}

\newcommand\Sing{\operatorname{Sing}}

\newcommand{\WDiv}{\operatorname{WDiv}}
\newcommand{\Supp}{\operatorname{Supp}}

\newcommand{\vol}{\operatorname{vol}}

\newcommand{\red}{\operatorname{red}}

\newcommand{\rank}{\operatorname{rank}}
\newcommand{\ninv}{\operatorname{ninv}}
\newcommand{\inv}{\operatorname{inv}}
\newcommand{\Int}{\operatorname{Int}}
\newcommand{\Exc}{\operatorname{Exc}}

\newcommand{\on}{\operatorname}

\title[Boundedness of Polarized Foliated Surfaces]{Boundedness of Polarized Foliated Surfaces}
\author{Yen-An Chen and Minzhe Zhu}
\address{School of Mathematics, Korea Institute for Advanced Study, Seoul, 02455, Korea}
\email{yachen@kias.re.kr}
\address{School of Mathematics, Korea Institute for Advanced Study, Seoul, 02455, Korea}
\email{zhumz@kias.re.kr}

\date{\today}
\subjclass[2020]{14E30, 37F75, 14J10}
\keywords{Boundedness, Foliated surfaces, Minimal Model Program}

\begin{document}
\begin{abstract}
We establish the minimal model program for klt adjoint foliated surfaces and use it to study the boundedness of polarized adjoint foliated surfaces. We prove that $\epsilon$-lc adjoint foliated surfaces with nef adjoint canonical divisor and a nef and big integral polarization form a bounded family, provided that the volume of their sum is bounded from above. As applications, we establish boundedness and effective birationality for adjoint foliated surfaces of general type, as well as a uniform positive lower bound for their volumes. Finally, for $\epsilon$-lc Calabi--Yau adjoint foliated surfaces, we prove that the volumes of the canonical divisors of rank one foliations on the underlying surfaces belong to a fixed discrete set.
\end{abstract}
\maketitle

\tableofcontents

\section{Introduction}
Throughout this paper, we work over the field of complex numbers $\bC$. By \textit{integral divisor}, we mean a $\mathbb{Q}$-Cartier Weil divisor with integer coefficients that is not necessarily Cartier and need not be effective.
\vspace*{10pt}

Boundedness is a fundamental problem in birational geometry and an important first step toward the construction of moduli spaces. Landmark results include the boundedness of canonically polarized varieties \cite{HMX14ACCLCT,HMX18BoundednessModuliVarieties} and Birkar's proof of the Borisov–Alexeev–Borisov conjecture for Fano varieties with mild singularities \cite{Bir19AntiPluricanonicalSystemsFano,Bir21SingularitiesLinearSystems}. In settings where the canonical divisor does not itself provide sufficient positivity, it is natural to equip the variety with an additional polarization. This point of view has led to various boundedness results for polarized varieties and related structures; see, for example, \cite{Bir22ModuliAlgebraicVarieties,Bir23GeometryPolarisedVarieties,Jiang23BoundednessKltGoodMinimalModels,BDCS24BoundednessEllipticCalabiYau,Fil24BoundednessNfoldsKappa,FHS25BoundednessEllipticCalabi,HH25BoundednessModuliSpaces,jiao25BoundednessPolarizedCalabiYau,JJZ25BoundednessPolarizedLCYFibrations,Zhu25BoundednessStableMinimal}. 

Meanwhile, there has been much recent progress in the minimal model program for foliations; see \cite{CS21MMPCorankOneThreefolds,SS22LocalGlobalMMPCorankOneThreefolds,CHLX23MMPAlgIntFoliationGenPair,SS23EffectiveGenerationFoliatedSurfaces,Cas+24MMPAlgIntAdjFolStr,Cas+25OnFiniteGenerationAndBoundedness,CS25MMPRankOneThreefolds,CS25MMPAlgIntFoliations,LMX25MMPAlgebraicallyIntegrable}. This makes it natural to investigate analogous boundedness questions in the foliated setting. However, the birational geometry governed by the canonical divisor $\KCF$ of a foliation $\cF$ differs substantially from the classical theory. Many of the techniques that are indispensable in the standard boundedness theory fail when one considers the canonical divisor of a foliation alone. For instance, vanishing theorem of Kawamata–Viehweg-type is generally unavailable, the abundance conjecture is known to fail in general (\cite[Example 5.5]{ACSS21PositivityModuliPart} and \cite[Theorem IV.2.2]{McQ08CanonicalModelsFoliations}), and the bigness of $K_\cF$ does not necessarily imply effective birationality (\cite[Theorem 1.3]{Lu25Unboundedness}). 
Furthermore, the Fano case also behaves differently: there is no klt Fano foliation of rank one on a normal projective surface (\cite[Proposition 5.3]{AD13FanoFoliations}), whereas lc Fano foliations of rank one on normal projective surfaces are unbounded (\cite[Example 3.11]{CJV24ExistenceComplementsFoliations}). 
These difficulties indicate that the canonical divisor of the foliation alone does not provide a sufficiently flexible framework for extending the classical boundedness theory. 

Due to these obstacles, it is natural to enlarge the category of objects under consideration by introducing adjoint foliated structures $\fA:=(X,\cF,t)$, where $X$ is a normal projective variety, $\cF$ is a foliation on $X$, and $t\in [0,1]$ is a real number; see \cite{PS19EffectiveAlgebraicIntegration,SS23EffectiveGenerationFoliatedSurfaces,Cas+24MMPAlgIntAdjFolStr,Cas+25OnFiniteGenerationAndBoundedness,Vas25ExplicitBoundFoliatedSurface,Xu25NumericalReductionSharpThresholds,CLSS26BirationalBoundednessOfStableFamilies}. 
The corresponding adjoint canonical divisor $\KFA:=tK_\cF+(1-t)K_X$ simultaneously records the geometry of the foliation and that of the ambient variety. Consequently, adjoint foliated structures provide a unified setting in which techniques from both the classical and foliated minimal model programs can be combined. In this paper, we establish the minimal model program for adjoint foliated surfaces and study their boundedness after equipping the underlying surface with a polarization.

\subsection{MMP for adjoint foliated surfaces}
The MMP for adjoint foliated surfaces has previously been established under additional assumptions. Spicer and Svaldi proved an MMP for adjoint divisors of the form $\KCF+\lambda K_X$, under an adjoint log canonical assumption and with $\lambda>0$ sufficiently small; see \cite[Theorem~3.1]{SS23EffectiveGenerationFoliatedSurfaces}. More recently, Vassiliadis established an adjoint MMP assuming that the foliation is log canonical or non-dicritical; see \cite[Theorem~1.5]{Vas25ExplicitBoundFoliatedSurface}. We also refer to \cite{Cas+24MMPAlgIntAdjFolStr} for the MMP for algebraically integrable adjoint foliated structures of arbitrary rank and dimension.

Our first main result extends the surface results of Spicer--Svaldi and Vassiliadis to klt adjoint foliated surfaces. In particular, we do not require the foliation itself to be log canonical or non-dicritical, nor do we require the coefficient of $K_X$ to be sufficiently small. This MMP provides the birational framework needed to prove the boundedness results later in the paper.

\begin{thm}[{$=$ Theorem~\ref{adjoint MMP}}]\label{mainthm: adjoint MMP}
    Let $\fA:=(X,\cF,t)$ be a klt adjoint foliated surface. Then we may run a $\KFA$-MMP $f:X\to Y$. Moreover, the following properties hold:
    \begin{enumerate}
        \item $\fB:=(Y,f_*\cF,t)$ is klt.
        \item If $\KFA$ is pseudo-effective, then $\KFB$ is nef.
        \item If $\KFA$ is not pseudo-effective, then $Y$ admits a fibration $Y\to Z$ such that $\rho(Y/Z)=1$ and $-\KFB$ is ample over $Z$.
    \end{enumerate}
\end{thm}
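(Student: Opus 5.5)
The plan is to imitate the classical surface MMP, using the adjoint structure to recover the cone and contraction theorems.

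\textbf{Cone theorem.} If $t=1$ the statement is the foliated surface MMP, so assume $t<1$. I would establish a cone theorem for $\KFA$ on a klt adjoint foliated surface. Write
\[
\KFA=t(\KCF+\tfrac{1-t}{t}K_X)\quad\text{or}\quad \KFA=(1-t)K_X+t\KCF
\]
and combine two inputs: the cone theorem for $K_X$ on the klt surface $X$ (the adjoint klt condition with $t<1$ should force $X$ to be klt, hence $\mathbb{Q}$-factorial), and the cone theorem for $\KCF$ on surfaces in the style of McQuillan, Brunella and Spicer. Both are generated by rational curves. For a $\KFA$-negative extremal ray $R$, either $K_X\cdot R<0$ or $\KCF\cdot R<0$. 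In the second case the ray is spanned by an $\cF$-invariant rational curve or by a fibre of a foliated Mori fibration. This gives
\[
\overline{NE}(X)=\overline{NE}(X)_{\KFA\ge0}+\sum\bR_{\ge0}[C_i],
\]
with countably many rational curves $C_i$, discrete in the $\KFA$-negative half. On a surface, extremal rays spanned by curves with $C^2<0$ are isolated.

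\textbf{Contraction.} For a $\KFA$-negative extremal ray $R=\bR_{\ge0}[C]$ there are three cases. If $C^2<0$, I contract $C$ using Artin/Grauert contractibility of negative curves on $\mathbb{Q}$-factorial klt surfaces, with projectivity as in Fujino's MMP for log surfaces. If $C^2=0$, I get a fibration to a curve. If $C^2>0$, then $\rho=1$ and $Y\to Z$ is the map to a point. For $C^2\le0$ with $C$ not exceptional, the contraction exists because $-\KFA$ is positive on $R$ and $R$ lies on the boundary of the nef cone. Alternatively, I take $H$ ample and $L=H+a\KFA$ supporting $R$. Abundance-type semiampleness follows on surfaces from the classification of nef divisors with $L^2=0$, or from the basepoint-free theorem for klt surfaces applied to $K_X$ when $K_X\cdot R<0$. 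Each divisorial contraction drops $\rho$, so the process terminates.

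\textbf{Preservation of klt and the conclusions (1)--(3).} This is the step I expect to be the main obstacle: showing $(Y,f_*\cF,t)$ stays klt after contracting $C$. I would use the negativity lemma. Write
\[
g^*\KFB=\KFA+aC \quad\text{with } a>0,
\]
since $\KFA\cdot C<0$ and $C^2<0$. Hence discrepancies only increase. Here discrepancies are adjoint discrepancies $t\,a_{\cF}(E)+(1-t)a_X(E)$ with respect to $\epsilon(E)$ weighting, and I compare on a common resolution. This is linear in the divisor, so the negativity lemma applies verbatim. The subtlety is that non-dicritical or lc assumptions on $\cF$ are no longer available. I must check that the adjoint discrepancy definition is well behaved under pullback even when $\cF$ is dicritical, which I expect to follow from the definitions recalled in the preliminaries.

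Conclusion (2): if $\KFA$ is pseudo-effective, no Mori fibre space occurs, since on a fibration the general fibre $F$ is movable with $\KFA\cdot F<0$, which contradicts pseudo-effectivity. So the program ends with $\KFB$ nef. Conclusion (3): if $\KFA$ is not pseudo-effective, it never becomes nef. Therefore the program ends in a fibration contraction $Y\to Z$ with $\rho(Y/Z)=1$ and $-\KFB$ relatively ample by construction.
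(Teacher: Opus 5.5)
Your proposal has a genuine gap. It is in the contraction step, not in the preservation of kltness, which is a routine negativity-lemma argument. (Your sign is off there: with $a>0$ one has $\KFA=g^*\KFB+aC$, not $g^*\KFB=\KFA+aC$.) Rays that are $(K_X+B)$-negative are handled by the classical contraction theorem. So the entire content of the theorem is the case $(\KCF+\nB)\cdot R<0\le (K_X+B)\cdot R$, with $\cF$ possibly neither lc nor non-dicritical, and here none of your arguments goes through.

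\begin{itemize}
\item \emph{The case $C^2<0$.} Grauert's criterion only produces an analytic contraction. Artin's criterion produces a projective one only if the resulting singularity is rational. You never control the configuration formed by the strict transform of $C$ and the exceptional curves over $\Sing X\cap C$ on the minimal resolution. Fujino's projectivity argument relies on exactly the $(K_X+\Delta)$-negativity that is missing here.
\item \emph{The non-big case.} Semiampleness does not follow from any ``classification of nef divisors with $L^2=0$''. Nef non-semiample classes with $L^2=0$ exist, e.g.\ the section $C_0$ of the ruled surface given by the non-split extension over an elliptic curve, or $\KCF$ for non-abundant foliations. Your claim ``if $C^2=0$ I get a fibration'' is precisely what must be proved. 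Your fallback to the base point free theorem needs $K_X\cdot R<0$, which you never establish.
\end{itemize}

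The missing ideas are the following.

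\begin{itemize}
\item \emph{Fibre-type case.} One shows that $R$ is automatically $(K_X+B)$-negative.
\begin{itemize}
\item Foliated bend-and-break \cite[Corollary~2.28]{Spi20HigherDimensionalFoliatedMori}, which needs no lc hypothesis, gives a covering family of invariant rational curves with classes in $R$.
\item On an F-dlt modification $\pi\colon W\to X$, the pulled-back foliation $\cG$ is non-dicritical. Hence these curves are general fibres of a fibration, so $K_\cG\cdot C_W=K_W\cdot C_W=-2$.
\item Writing $K_W+B_W+E=\pi^*(K_X+B)+P$ with $P\ge 0$ gives $(K_X+B)\cdot C<0$, so the classical theorem applies (Theorem~\ref{Mori fiber space:foliation negative}).
\end{itemize}
\item \emph{Divisorial case.} One again passes to the F-dlt modification.
\begin{itemize}
\item Contract $C_W$ using the contraction theorem for lc foliated surface pairs.
\item Run an auxiliary MMP for the foliated pair plus a large multiple of the pulled-back supporting divisor.
\item Then run a second MMP whose steps are negative for the underlying lc pair $(K+B+E)$. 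Together these contract the leftover $\pi$-exceptional curves.
\item A Hodge index argument shows all of them are contracted, so the map descends to a morphism $X\to Y$ contracting only $C$ (Theorem~\ref{divisorial contraction: foliation negative}).
\end{itemize}
\end{itemize}

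Finally, your assertion that adjoint kltness ``should force'' $X$ to be klt is not automatic either. It rests on the inequality $A(E,X,B)\ge A(E,\cF,\nB)$ for every prime divisor $E$ over $X$. The paper proves this in Proposition~\ref{control singularities}, again via an F-dlt modification.
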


\subsection{Boundedness results}
We now turn to our main boundedness theorem for polarized adjoint foliated surfaces. See Definition~\ref{def of boundedness} for the definition of boundedness for adjoint foliated surfaces.
\begin{thm}[{$=$ Theorem~\ref{bdd of surfaces}}]\label{mainthm: bdd of surfaces without pairs}
Let $\epsilon, v$ be positive real numbers and $I\subset \bQ\cap(0,1)$ be a finite set. 
We consider adjoint foliated surfaces $\fA:=(X,\cF,t)$ and integral divisors $N$ on $X$ such that
    \begin{enumerate}
        \item $\fA$ is $\epsilon$-lc,
        \item $t\in I$,
        \item $\KFA$ is nef,
        \item $N$ is nef and big, and
        \item $\vol(\KFA+N)\leq v$.
    \end{enumerate}
    Then the set of such adjoint foliated surfaces $(X,\cF,t)$ forms a bounded family. If in addition $N\geq 0$, then the set of such adjoint foliated surfaces $(X,\cF,\Supp N,t)$ forms a log bounded family. 
\end{thm}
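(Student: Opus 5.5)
The plan is to reduce the statement to a boundedness problem for the surfaces $X$ themselves, polarized by a divisor with bounded volume and bounded Cartier index. Then we recover the foliation from a bounded family of twisted vector fields.

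\textbf{Step 1: singularities of $X$.} First I will show that $\epsilon$-lc of $\fA$ with $t\in I$, $t<1$, forces $X$ to be $\epsilon'$-lc for some $\epsilon'>0$ depending only on $\epsilon$ and $I$. The point is the comparison of discrepancies along a divisor $E$ over $X$. We have $a(E,\fA)=t\,a(E,\cF)+(1-t)a(E,X)$. Moreover $a(E,\cF)\le a(E,X)+1$, with the $+1$ coming from the invariant versus non-invariant correction. Hence
\[
a(E,X)\ge \frac{a(E,\fA)-t}{1}\quad\text{or better}\quad a(E,X)\ge a(E,\fA)-t\epsilon_{\cF}(E)\ge -1+\epsilon,
\]
where the precise inequality is to be read off the local structure of foliation singularities on surfaces. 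Two consequences follow by Alexeev's boundedness of $\epsilon'$-lc surface germs. The surfaces $X$ have bounded index, and every integral Weil divisor on $X$ is $m$-Cartier for a fixed $m$. In particular, $m\KCF$, $mK_X$ and $mN$ are Cartier. Since $I$ is finite, $\KFA$ then has bounded Cartier index as well.

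\textbf{Step 2: bounded polarization.} Put $L:=\KFA+N$. It is nef and big with $\vol(L)\le v$, and $L$ has bounded Cartier index. I then want an effective very ampleness (or effective birationality) statement for $kL$ with $k$ depending only on $\epsilon,I,v$. My approach is to write
\[
kL-K_X = (k-1)\KFA + t(\KCF - K_X) + kN + \dots.
\]
This is awkward because $\KCF-K_X$ has no sign. The cleaner route is to use the $\epsilon'$-lc property of $X$ together with the fact that $N$ is integral, nef and big. Since $K_X+3N$-type divisors are controlled on klt surfaces, Fujita-type results for surfaces (Reider's method extended to klt surfaces) give that $K_X+(\text{bounded})\,mN$ is very ample away from the null locus. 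What is actually needed is that $K_X + c\,mL$ is nef and big and effectively base point free, and that its volume is bounded. For the volume bound I expand $(K_X+cmL)^2$ using $L^2\le v$ and $L\cdot K_X$. The quantity $L\cdot K_X$ is bounded via $L\cdot \KFA\ge0$ and $L\cdot N\ge 0$, since these force $L\cdot K_X\le L^2/(1-t)$ up to the $\KCF$ term. The bound on $L\cdot\KCF$ comes from Miyaoka-type positivity, or more simply from $\KCF=(\KFA-(1-t)K_X)/t$. Once $X$ carries a very ample divisor $H$ of bounded volume with $H\cdot K_X$ bounded, the surfaces $X$ are bounded. If $N\ge0$, then $H\cdot N\le H\cdot L$ is bounded, which gives log boundedness of $(X,\Supp N)$.

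\textbf{Step 3: the foliation.} A rank one foliation $\cF$ is determined by a nonzero section of $T_X\otimes\cO_X(\KCF)$. Since $m\KCF$ is Cartier and $H\cdot\KCF$ is bounded, the classes of $\KCF$ lie in finitely many components of the relative Picard/Chow scheme over the bounded family. The sections then form a finite-type family, which yields boundedness of $(X,\cF)$.

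\textbf{Main obstacle.} I expect the main obstacle to be Step 2: producing an effectively very ample divisor of bounded degree from $L$. The issue is that $\KFA$ is not of the form $K_X+\text{nef}$, so standard vanishing does not apply directly. If that fails, I would pass to the minimal resolution (or the terminalization via Theorem~\ref{mainthm: adjoint MMP}) and use Step 1 to control the number of exceptional curves.
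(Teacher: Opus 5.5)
Your proposal has a genuine gap. Step~2, which carries the whole argument, has no working mechanism, and Step~1 rests on a false lemma that Steps~2--3 then rely on.

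\textbf{Step~1.} The conclusion that $X$ is $\epsilon$-lc is correct; this is Proposition~\ref{control singularities}. The needed inequality $A(E,X)\geq A(E,\cF)$ is a real input, proved in the paper via an F-dlt modification, and your displayed chain of inequalities does not establish it. The deduction of bounded index is false, because $\epsilon$-lc surface germs have unbounded index:
\begin{itemize}
\item the $A_n$ singularities are canonical, with local class group $\bZ/(n+1)$;
\item the cyclic quotient singularities $\frac{1}{2r+1}(1,r)$, with resolution chain $[3,2,\dots,2]$ of length $r$, are $\frac12$-lc while $K_X$ has index $2r+1$.
\end{itemize}
Alexeev's theorem bounds log del Pezzo surfaces, not germs. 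Bounded Cartier index becomes available only once $X$ is known to lie in a bounded family; this is how \cite{HJ26TotalCartierIndexBoundedFamily} is used in Corollary~\ref{discreteness of volume of CY}. For the same reason your fallback of controlling exceptional curves ``via Step~1'' fails. Local $\epsilon$-lc bounds neither the number of singular points nor the length of $(-2)$-chains.

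\textbf{Step~2.} Here $L=\KFA+N$ is only nef and big, so the $L$-trivial curves are exactly what must be controlled. No divisor $K_X+ckL$ can be expected to be very ample, or even nef. On an $L$-trivial curve $C$ one has $(K_X+ckL)\cdot C=K_X\cdot C$, and nothing forces this to be non-negative. Consequently:
\begin{itemize}
\item Reider/Fujita-type results do not apply;
\item $(K_X+ckL)^2$ need not compute a volume, and $K_X^2$ is uncontrolled anyway;
\item there is no vanishing available for $kL$.
\end{itemize}

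Your intersection bound is also circular. The identity $L\cdot\KFA=t\,L\cdot\KCF+(1-t)\,L\cdot K_X\in[0,v]$ is a single relation. Pseudo-effectivity of $\KCF$ \cite[Lemma~5.3]{LX25NonalgebraictyNonabundant} yields $L\cdot K_X\leq v/(1-t)$. Nothing bounds $L\cdot\KCF$ from above, and rewriting $\KCF$ through $\KFA$ and $K_X$ just returns to the start.

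\textbf{What is missing.} The paper proceeds as follows.
\begin{itemize}
\item Only birational boundedness is obtained directly. This uses Birkar's effective birationality for $|m'K_X+l'N+P|$ with $P=mt\KCF$ pseudo-effective (Proposition~\ref{effective birationality}).
\item Degree bounds then come from \cite[Lemma~3.2]{HMX13OnBirationalAutomorphisms} and from Lemma~\ref{bound foliation}. The latter bounds $\KCH\cdot A$ on the birational model using a relative adjoint MMP and the length of extremal rays.
\item A uniform lct bound (Lemma~\ref{lower bound of lct}) lets one add $\tau L$ to the boundary, so that the adjoint divisor becomes ample, or at least nef and big.
\item Theorem~\ref{Deriving boundedness from birational boundedness} upgrades birational boundedness to boundedness. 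It uses finiteness of ample models on the generic fiber, bigness on the generic fiber, and spreading out of the ample models.
\item Crepant extraction in families (Proposition~\ref{extract divisor in family}) handles the $L$-trivial curves when $\KFA+N$ is not ample.
\end{itemize}

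\textbf{Step~3.} This is essentially Proposition~\ref{bdd criterion}. However, it needs as input a very ample $H$ with $H^2$ and $H\cdot\KCF$ bounded, and Step~2 never produces one.
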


Spicer and Svaldi proved a boundedness result for canonical adjoint foliated surfaces, assuming that the adjoint coefficient is fixed and sufficiently small and that the adjoint canonical divisor is ample; see \cite[Theorem~1.3]{SS23EffectiveGenerationFoliatedSurfaces}.

As a first application, we obtain the following corollary which extends their result to $\epsilon$-lc adjoint foliated surfaces, allows the adjoint coefficient to vary in a fixed finite subset, and only requires the adjoint canonical divisor to be nef and big.

\begin{cor}[{$=$ Corollary~\ref{bdd of surfaces of general type}}]\label{main thm:bdd of surfaces of general type}
    Let $\epsilon, v$ be positive real numbers and $I\subset\bQ\cap(0,1)$ be a finite set. 
    We consider adjoint foliated surfaces $\fA:=(X,\cF,t)$ such that
    \begin{enumerate}
        \item $\fA$ is $\epsilon$-lc,
        \item $t\in I$,
        \item $\KFA$ is nef and big, and
        \item $\vol(\KFA)\leq v$.
    \end{enumerate}
    Then the set of such adjoint foliated surfaces $(X,\cF,t)$ forms a bounded family.
\end{cor}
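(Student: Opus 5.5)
The plan is to deduce the corollary from Theorem~\ref{mainthm: bdd of surfaces without pairs} by producing, for each $\fA$ in the family, a nef and big \emph{integral} divisor $N$ on $X$ with $\vol(\KFA+N)$ bounded above uniformly. The natural candidate is a fixed multiple of $\KFA$ itself. Since $I$ is a finite subset of $\bQ\cap(0,1)$, there is a positive integer $r$, depending only on $I$, such that $rt\in\bZ$ for every $t\in I$. Then
\[
rK_{\fA}=rt\,\KCF+r(1-t)K_X
\]
is a Weil divisor with integer coefficients.

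The first step is to check that it is $\bQ$-Cartier. This should follow from the $\epsilon$-lc (hence klt) hypothesis: the underlying surface $X$ of a klt adjoint foliated surface with $t<1$ should be klt, hence $\bQ$-factorial, since a normal surface with rational singularities is $\bQ$-factorial. I would invoke whichever preliminary result of the paper gives this. So set $N:=rK_{\fA}$. It is an integral divisor in the paper's sense, and it is nef and big because $\KFA$ is. Conditions (1)--(4) of Theorem~\ref{mainthm: bdd of surfaces without pairs} then hold for $(\fA,N)$.

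For condition (5), compute
\[
\vol(\KFA+N)=\vol((1+r)\KFA)=(1+r)^2\vol(\KFA)\le (1+r)^2v.
\]
Applying Theorem~\ref{mainthm: bdd of surfaces without pairs} with the constants $\epsilon$, $(1+r)^2v$ and $I$ gives that the set of such $(X,\cF,t)$ is bounded.

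The only step needing care is the $\bQ$-Cartier property of $N$, that is, that $X$ is $\bQ$-factorial, or at least that $\KCF$ and $K_X$ are $\bQ$-Cartier. If the paper's klt notion for adjoint foliated structures does not directly yield klt singularities of $X$, I would instead use that $\KFA$ itself is $\bQ$-Cartier (implicit in the nefness hypothesis). Then some multiple $mrK_{\fA}$ is Cartier. This multiple could depend on $X$, however, so the first route through $\bQ$-factoriality of klt surfaces is preferable. In fact the theorem only needs $N$ to be $\bQ$-Cartier, not Cartier, so $\bQ$-factoriality of $X$ suffices and no index bound is required.
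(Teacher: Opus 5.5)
Your proposal is correct and matches the paper's own argument: choose $p$ depending only on $I$ so that $N:=p\KFA$ is integral, observe that $\vol(\KFA+N)\le (p+1)^2v$, and apply Theorem~\ref{bdd of surfaces}. The paper leaves implicit why $N$ is $\bQ$-Cartier, and your justification for it is right: $X$ is klt by Proposition~\ref{control singularities}, and a klt surface is $\bQ$-factorial.
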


A key ingredient in our boundedness arguments is the following effective birationality theorem for adjoint foliated surfaces of general type.

\begin{thm}[{$=$ Corollary~\ref{effective birationality of general type}}]\label{main thm:effective birationality of general type}
    Let $\epsilon$ be a positive real number and $I\subset \bQ\cap(0,1)$ be a finite set. Then there exists a positive integer $m$ depending only on $\epsilon,I$ satisfying the following: 

    If $\fA:=(X,\cF,t)$ is an $\epsilon$-lc adjoint foliated surface with $t\in I$ and $\KFA$ is big, then $|m\KFA|$ defines a birational map.
\end{thm}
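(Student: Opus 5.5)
We reduce to a classical statement about $\epsilon'$-lc surfaces, using the structure of $\KFA=tK_\cF+(1-t)K_X$. Write $t=p/q$ with $p,q$ bounded because $I$ is finite; then $q\KFA=pK_\cF+(q-p)K_X$ is an integral Weil divisor. First we pass to a minimal model. By Theorem~\ref{mainthm: adjoint MMP}, since $\KFA$ is big and hence pseudo-effective, we run a $\KFA$-MMP $X\to Y$ ending with $\KFB$ nef and big, with $\fB$ still $\epsilon$-lc; we expect the MMP to preserve the discrepancy bound as in the classical case. Sections of $m\KFA$ and $m\KFB$ coincide because the MMP only contracts $\KFA$-negative curves. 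So we may assume $\KFA$ is nef and big.

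The key input is that $\epsilon$-lc of $\fA$ should imply that $X$ itself is $\delta$-lc for some $\delta=\delta(\epsilon,I)>0$. Discrepancies of the adjoint structure are $t a(E,\cF)+(1-t)a(E,X)$, and foliated discrepancies are bounded above by $\epsilon(E)$-type terms, namely $a(E,\cF)\le a(E,X)+$ a bounded quantity (for instance $a(E,\cF)\leq a(E,X)$ when $E$ is $\cF$-invariant). Hence $a(E,X)\ge \epsilon$ up to adjustments depending on $t$, and $t\le \max I<1$. The index of $X$ at its singular points is then bounded by the classification of $\delta$-lc surface singularities, which form bounded families locally.

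Next we would apply a Kawamata–Viehweg / Reider-type argument on the minimal resolution (or directly on the $\delta$-lc surface). We would write $m\KFA-K_X = (m-1+t)\cdot\ldots$; more concretely, $m q\KFA - K_X = \big(mq\KFA - K_X\big)$, and we need this to be nef and big with large self-intersection in order to separate general points. To handle the foliation term, we use that $K_\cF$ and $K_X$ differ by the conormal/tangency relation. On a general leaf, $K_\cF\cdot C$ and $K_X\cdot C$ are controlled, so $K_X$ is bounded by a multiple of $\KFA$ plus effective terms, for instance via $\KFA - \tfrac{1-t}{?}K_X$ pseudo-effectivity coming from the bigness argument. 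We would then invoke the standard criterion: if $\vol(D)>$ a bound then $|K_X+\lceil D\rceil|$ separates general points. This needs a positive lower bound on $\vol(\KFA)$, which we obtain from a volume DCC argument: the values $q^2\KFA^2$ lie in $\frac{1}{r}\bZ$ with $r$ the bounded index, so $\vol(\KFA)\ge 1/(rq^2)$.

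The main obstacle is the step converting $m\KFA$ into $K_X+$ (nef and big), because $K_\cF-K_X$ is not controlled in general. We would first try to bound the negative part of $K_X$ relative to $\KFA$ using bend-and-break. If $K_X$ is not pseudo-effective, $X$ is covered by rational curves tangent or transverse to $\cF$. In the tangent case $K_\cF\cdot C<0$ and nefness of $\KFA$ constrains $t$; in the transverse case, $K_\cF\cdot C\ge$ the tangency count gives control. This leads to an inequality $\KFA\ge c\,(\text{ample-like})$ suitable for Kawamata–Viehweg with $m$ depending only on $\epsilon$ and $I$.
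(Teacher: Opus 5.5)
\textbf{There is a genuine gap.} Several of your steps are fine:
\begin{itemize}
\item Passing to a minimal model with $\KFA$ nef and big is correct.
\item The claim that the underlying surface is $\epsilon$-lc is exactly Proposition~\ref{control singularities}. It gives $A(E,X)\geq A(E,\cF)$, hence $A(E,X)\geq A(E,\fA)\geq\epsilon$, with no loss depending on $t$.
\item The bound $\vol(\KFA)\geq 1/(rq^2)$, using the bounded Cartier index of $\epsilon$-lc surface singularities, is correct.
\end{itemize}
The decisive step, producing a birational $|m\KFA|$, is only sketched and misses a key input. Write $m\KFA-K_X=(m-\tfrac{1}{1-t})\KFA+\tfrac{t}{1-t}\KCF$. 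Any Kawamata--Viehweg, Reider or adjoint-system argument needs $\KCF$ to be pseudo-effective. Your ``$\KFA-\tfrac{1-t}{?}K_X$ pseudo-effective'' with the correct coefficient $1-t$ is literally the statement that $t\KCF$ is pseudo-effective.

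Your bend-and-break paragraph targets the wrong divisor: non-pseudo-effectivity of $K_X$ is harmless. What you need is this. If $\KCF$ were not pseudo-effective, then by Miyaoka and Bogomolov--McQuillan, $\cF$ is induced by a fibration. Its general fiber $C$ is a smooth rational curve in the smooth locus with $\KCF\cdot C=K_X\cdot C=-2$. Then $\KFA\cdot C=-2<0$ on a covering family, contradicting bigness of $\KFA$. The paper imports this as \cite[Lemma~5.3]{LX25NonalgebraictyNonabundant}.

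Even granting this, $m\KFA-K_X$ is only big, not nef, and $X$ is singular. So ``the standard criterion'' does not apply as stated, and you would have to carry out a Zariski-decomposition or non-klt-centre argument yourself.

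The paper avoids this with Birkar's theorem \cite[Corollary~1.2]{Bir23GeometryPolarisedVarieties}. On an $\epsilon$-lc surface, $|m'K_X+l'N+P|$ is birational whenever $N$ is nef, big and integral, $P$ is pseudo-effective and integral, $m'\geq m_0$ and $l'\geq l_0m'$. Apply it on the minimal model $f:X\to Y$ with $\cG:=f_*\cF$:
\begin{itemize}
\item take $N=p\KFB$ and $P=mt\KCG$, which is pseudo-effective by the lemma above;
\item take $m'=m(1-t)$, with $m$ divisible enough and $n$ large, depending only on $\epsilon,I$.
\end{itemize}
Then $m(t\KCG+(1-t)K_Y)+nN$ is a fixed multiple of $\KFB$, so that multiple of $\KFB$ is birational. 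One pulls back using $\KFA=f^*\KFB+E$ with $E\geq 0$. In this route no volume lower bound is needed as input; the paper deduces it afterwards from effective birationality (Corollary~\ref{lower bound of volume of general type}).
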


Uniform lower bounds for canonical volumes form an important part of the classical boundedness theory; see, for example, \cite[Theorem~1.3]{HMX14ACCLCT}. In the foliated setting, Spicer and Svaldi proved a uniform lower bound for the volumes of adjoint divisors associated with canonical foliations and a fixed sufficiently small adjoint parameter; see \cite[Theorem~1.5]{SS23EffectiveGenerationFoliatedSurfaces}. More recently, an analogous lower bound was established for lc algebraically integrable adjoint foliated structures in arbitrary dimension; see \cite[Theorem~2.3]{CLSS26BirationalBoundednessOfStableFamilies}. 

As another consequence of the effective birationality theorem above, we obtain the following lower bound for $\epsilon$-lc adjoint foliated surfaces.

\begin{cor}[{$=$ Corollary~\ref{lower bound of volume of general type}}]\label{mainthm:lower bound of volume of general type}
Let $\epsilon$ be a positive real number, and $I\subset \bQ\cap(0,1)$ be a finite set. Then there exists a positive real number $v$ depending only on $\epsilon, I$ satisfying the following: 

If $\fA:=(X,\cF,t)$ is an $\epsilon$-lc adjoint foliated surface with $t\in I$ and $\KFA$ is big, then $\vol(\KFA)\geq v$. 
\end{cor}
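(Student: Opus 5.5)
We deduce the lower bound from the effective birationality statement, Theorem~\ref{main thm:effective birationality of general type}, by a standard argument. Fix $\epsilon$ and $I$, and let $m$ be the positive integer produced by Theorem~\ref{main thm:effective birationality of general type}. Since $I$ is finite, we may also fix a positive integer $r$ such that $rt\in\bZ$ for every $t\in I$. Replacing $m$ by $rm$ is harmless: if $|m\KFA|$ defines a birational map, then so does $|rm\KFA|$, because adding the fixed effective divisor $(r-1)D$ for $D\in|m\KFA|$ preserves birationality. So we may assume $m\KFA$ is an integral Weil divisor for all $\fA$ under consideration. We take $v:=1/m^2$.

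Now let $\fA=(X,\cF,t)$ be $\epsilon$-lc with $t\in I$ and $\KFA$ big. Choose a resolution $\pi\colon W\to X$ resolving the base locus of $|m\KFA|$. Write $\pi^*|m\KFA|=|M|+F$, with $|M|$ base point free and $F\geq 0$ the fixed part. The map defined by $|M|$ is birational onto its image in some projective space. Hence $M$ is nef and big, and
\[
M^2 \;=\; \deg(\text{image surface}) \;\geq\; 1 .
\]
Because $M\leq \pi^*(m\KFA)$, with the difference effective, monotonicity of volume gives
\[
\vol(m\KFA)\;=\;\vol(\pi^*(m\KFA))\;\geq\;\vol(M)\;=\;M^2\;\geq\;1 .
\]
Here we use that $\vol$ of a Weil divisor on a normal surface is computed on any resolution via the pullback, which holds since sections of $\cO_X(m\KFA)$ pull back into sections of $\cO_W(\lfloor \pi^*m\KFA\rfloor)$. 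Therefore $\vol(\KFA)=\vol(m\KFA)/m^2\geq 1/m^2=v$.

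The only step requiring care is the comparison of volumes on the normal, possibly non-$\bQ$-factorial surface $X$. Two points must be checked. First, $\KFA$ is $\bR$-Cartier, in fact $\bQ$-Cartier, since normal surfaces have Mumford pullback. Second, $H^0(X,\cO_X(km\KFA))\supseteq \pi_*$ of the sections of $kM$, which gives $\vol(\KFA)\geq \vol(M)/m^2$. The rest is formal, so the real content lies entirely in Theorem~\ref{main thm:effective birationality of general type}.
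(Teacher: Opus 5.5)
Your proposal is correct and follows the paper's route: the paper calls the bound an immediate consequence of the effective birationality corollary, and your estimate $\vol(m\KFA)\geq \vol(M)=M^2\geq 1$, giving $\vol(\KFA)\geq 1/m^2$, is the standard way to fill in that step. One justification needs correcting, though it does not affect the argument: $\KFA$ is $\bQ$-Cartier not because normal surfaces have Mumford pullback, but because it is $\bR$-Cartier by definition and $X$ is klt, hence $\bQ$-factorial, by Proposition~\ref{control singularities}.
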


In a different direction, Jiao proved that the volumes of integral divisors on $\epsilon$-lc Calabi--Yau pairs belong to a fixed discrete set; see \cite[Theorem~1.1]{Jiao25Discretenessvolume}.

In the same spirit as Jiao’s result, we prove the following discreteness statement for the canonical divisors of rank one foliations on surfaces admitting a Calabi--Yau adjoint foliated structure.

\begin{cor}[{$=$ Corollary~\ref{discreteness of volume of CY}}]\label{mainthm:discreteness of volume of CY}
Let $\epsilon$ be a positive real number and $I\subset \bQ\cap(0,1)$ be a finite set. Then there exists a discrete set $J\subset \bQ^{>0}$ depending only on $\epsilon, I$ satisfying the following: 

Assume that
\begin{enumerate}
\item $\fA:=(X,\cF,t)$ is an $\epsilon$-lc adjoint foliated surface with $t\in I$,
\item $\KFA\sim_{\bQ}0$, and
\item $\cG$ is a rank one foliation on $X$.
\end{enumerate}
Then $\vol(\KCG)\in J\cup\{0\}$. In particular, if $\KCG$ is big, then $\vol(\KCG)$ is bounded from below by $\min J$.
\end{cor}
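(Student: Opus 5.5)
The plan is to reduce the statement to the boundedness theorem, Theorem~\ref{mainthm: bdd of surfaces without pairs}, with a suitable integral polarization, and then use invariance of volume in a bounded family to obtain discreteness.

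\textbf{Step 1: bound the underlying surfaces.} Since $\KFA\sim_{\bQ}0$, it is nef. An $\epsilon$-lc adjoint foliated surface is in particular klt. I would like an integral nef and big divisor $N$ on $X$ with $\vol(\KFA+N)=N^2$ bounded above. This is the crux.

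One could take $N$ to be a multiple of $\KCG$ when $\KCG$ is big. However, $\KCG$ need not be nef, and its volume is exactly what we are trying to control, so this does not help.

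A better approach is to show first that $X$ itself lies in a bounded family. Write $K_X\sim_{\bQ}-\frac{t}{1-t}\KCF$. When $\KCF$ is pseudo-effective, $-K_X$ is not pseudo-effective. Using the $\epsilon$-lc hypothesis, which should give $X$ $\delta$-lc singularities with $\delta$ depending on $\epsilon$ and $I$, one would then try to obtain log Fano-type or Calabi--Yau-type structures on $X$. These are bounded by Alexeev/BAB in dimension two, at least up to the issue of fibration structures.

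Concretely, I would try the following. Take any very ample-type divisor $A$ from boundedness of $\delta$-lc surfaces of this kind. Then apply Theorem~\ref{mainthm: bdd of surfaces without pairs} with $N$ a bounded multiple of $A$, so that $\vol(\KFA+N)=N^2\le v$. Conclude that the triples $(X,\cF,t)$, and in particular the surfaces $X$, are bounded.

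\textbf{Step 2: parametrize the $X$.} Having bounded $X$, choose a bounded family $\cX\to S$ with a relatively very ample $H$ such that every $X$ is a fibre $\cX_s$. After stratifying $S$ and passing to a resolution, I may assume the following:
\begin{itemize}
\item there is a simultaneous log resolution $\cW\to\cX$;
\item the Picard groups (or N\'eron--Severi lattices) of the fibres are locally constant;
\item the Cartier index of Weil divisors on $X$ is bounded by some $r$, which follows from klt with bounded singularities.
\end{itemize}

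\textbf{Step 3: reduce to a Zariski decomposition.} For a rank one foliation $\cG$ on $X$, $\KCG$ is an integral Weil divisor, and $r\KCG$ is Cartier. Its volume equals $P^2$, where $P$ is the positive part of the Zariski decomposition on the minimal resolution $W$ of the pullback of $\KCG$. The divisor $\KCG$ determines a class in the lattice $\frac1r\on{NS}(W)$.

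\textbf{Step 4: discreteness.} The negative part of the Zariski decomposition is supported on negative definite configurations of curves. I claim $P$ has denominators bounded in terms of the lattice and the bounded intersection matrices of the negative curves. The negative curves vary, but for bounded $W$ the relevant ones would have to be controlled. This gives $P^2\in\frac{1}{M}\bZ$ for a uniform $M$, and hence a discrete set $J\subset\bQ^{>0}$.

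The alternative route I would try first for Step 4 is to mirror Jiao's argument for pairs. Use boundedness to place all pairs $(X,\KCG)$ into a lattice $\on{NS}(X)$ of bounded rank and discriminant. Then invoke Jiao's discreteness result \cite[Theorem~1.1]{Jiao25Discretenessvolume} directly, applied to a suitable $\epsilon'$-lc Calabi--Yau pair structure $(X,B)$ on $X$. Such a structure would be produced from $\KFA\sim_{\bQ}0$ by a complement-type argument.

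The main obstacle I expect is Step 1: producing the polarization $N$, equivalently a bounded Calabi--Yau pair structure on $X$, from the adjoint Calabi--Yau condition. In particular, when $\KCF$ is not pseudo-effective or when $X$ is not rationally connected, one needs a uniform control that is not obvious.
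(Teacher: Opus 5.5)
\textbf{There is a genuine gap.} Your Step~1 aims to bound the underlying surfaces $X$ with no volume hypothesis, and this is false. Take any abelian surface $X$ with a linear foliation $\cF$ generated by a global vector field. Then $\KCF\sim 0\sim K_X$ and $\cF$ is regular, so all exceptional divisors are invariant. Hence $(X,\cF,t)$ is $(1-t)$-lc with $\KFA\sim 0$. But abelian surfaces of Picard number one carrying polarizations of type $(1,d)$, $d\to\infty$, do not form a bounded family. So no Alexeev/BAB-type argument can produce a polarization $N$ with $N^2$ uniformly bounded on all such $X$.

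Steps~2--4 presuppose this bounded $X$, so they fall with it. There is also a sign slip: if $\KCF$ is pseudo-effective, then $-K_X\sim_{\bQ}\frac{t}{1-t}\KCF$ \emph{is} pseudo-effective. Your Jiao-type alternative also starts from boundedness, and it depends on a uniform $\epsilon'$-lc complement that you do not construct. The Zariski-decomposition argument in Step~4 would additionally need control of arbitrary negative-definite curve configurations, which you do not supply.

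\textbf{The missing idea is the one you discarded.} To produce $J$, it suffices to show that for each $v>0$ only finitely many values of $\vol(\KCG)$ lie in $(0,v]$. So you may \emph{assume} the volume bound. The paper proves this for an arbitrary big integral divisor $D$ on $X$, with $D=\KCG$ as a special case.

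Non-nefness is removed by the adjoint MMP, as follows.
\begin{itemize}
\item Take $L\in|D|_{\bQ}$ general and $\tau>0$ small, so that $\fB:=(X,\cF,\tau(\nL+\frac{1}{1-t}\iL),t)$ is klt.
\item Then $\KFB=\KFA+\tau L\sim_{\bQ}\tau D$, so the $\KFB$-MMP of Theorem~\ref{mainthm: adjoint MMP} is a $D$-MMP $f\colon X\to X'$.
\item The output $D':=f_*D$ is nef and big, and $\vol(D')=\vol(D)\le v$.
\item Since $\KFA\sim_{\bQ}0$, every step is $\KFA$-trivial. Hence the pushforward $\fA'$ is still $\epsilon$-lc with $K_{\fA'}\sim_{\bQ}0$.
\end{itemize}

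Now apply Theorem~\ref{mainthm: bdd of surfaces without pairs} with $N:=D'$, noting that $\vol(K_{\fA'}+N)=\vol(D')\le v$. This puts $X'$ in a bounded family depending only on $\epsilon,I,v$. Bounded Cartier index on such a family (\cite{HJ26TotalCartierIndexBoundedFamily}) gives an integer $r$ with $rD'$ Cartier. Hence $\vol(D)=(D')^2\in\frac{1}{r^2}\bZ_{>0}\cap(0,v]$, which is a finite set.

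Boundedness is only ever needed for the model $X'$, and only after the volume is bounded. Making $D$ nef on $X'$ also removes any need to analyse Zariski decompositions or negative curves.
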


\subsection{Sketch of proof}
We first explain the proof of the MMP for klt adjoint foliated surfaces. For simplicity of notation, we only consider the case without boundaries. Let $\fA:=(X,\cF,t)$ be a klt adjoint foliated surface and $\KFA=t\KCF+(1-t)K_X$. Let $R\subset \overline{\textnormal{NE}}(X)$ be a $\KFA$-negative extremal ray. Then the ray $R$ is either $K_X$-negative or $\KCF$-negative. Note that by Proposition~\ref{control singularities}, the underlying space $X$ is klt. If $R$ is $K_X$-negative, then it can be contracted by the classical contraction theorem for surfaces. Therefore, the main point is to treat the case where $R$ is $\KCF$-negative.

The difficulty in contracting a $K_\cF$-negative extremal ray in our setting is that the foliation $\cF$ itself may not be log canonical, so one cannot directly apply the usual contraction theorem for foliated surfaces. We treat divisorial and fiber type contractions separately.

First we consider the divisorial case. We pass to an F-dlt modification $\pi:W\to X$ of $\cF$, which is constructed by Cascini and Spicer; see \cite[Theorem 8.1]{CS21MMPCorankOneThreefolds}. If $C$ is a curve spanning the extremal ray $R$, then its strict transform $C_W$ intersects the induced foliated pair negatively. Hence we can contract $C_W$ on $W$ by the contraction theorem for lc foliated pairs. After this contraction, there may still be some exceptional divisors over $X$ left. We then run suitable auxiliary MMPs to contract these remaining exceptional divisors. Then the resulting birational map descends to a birational morphism from $X$. This gives the desired divisorial contraction of $R$.

The fiber type case is treated by a different method. Using the foliated bend-and-break method \cite[Corollary 2.28]{Spi20HigherDimensionalFoliatedMori}, one can show that there is a moving family of rational curves whose numerical class spans $R$. Note that \cite[Corollary 2.28]{Spi20HigherDimensionalFoliatedMori} does not require $\cF$ to be lc. This allows us to prove that $K_X\cdot R<0$. Therefore, the desired contraction follows from the classical contraction theorem applied to the underlying surface. This produces the required fiber type contraction.

Combining the divisorial and fiber type cases, every $\KFA$-negative extremal ray can be contracted. Since each birational contraction decreases the Picard number by one, the MMP terminates after finitely many steps. If $\KFA$ is pseudo-effective, the process ends with a model on which the adjoint canonical divisor is nef; otherwise, it ends with a Mori fiber space.

We now turn to the proof of the boundedness result for polarized adjoint foliated surfaces with bounded volumes. One of the main tools is Theorem~\ref{Deriving boundedness from birational boundedness}, which derives boundedness of adjoint foliated surfaces of general type from birational boundedness.

Suppose that our adjoint foliated surfaces are already log birationally bounded. We choose a bounded birational model appearing as a fiber of a fixed family. In the classical setting of pairs, one usually takes a simultaneous log resolution, runs a relative MMP, and then uses invariance of plurigenera to control the resulting canonical models fiberwise; see, for example, the proof of \cite[Theorem 1.6]{HMX14ACCLCT}. In the foliated setting, however, each of these three steps presents substantial difficulties.

First, even after stratifying the base, one cannot expect a simultaneous resolution on which the induced foliated pair on every fiber is foliated log smooth, or even has canonical singularities. Indeed, non-canonical foliation singularities may occur over a Zariski dense subset of the base; see \cite[\S 7.2]{PS19EffectiveAlgebraicIntegration}. To overcome this obstruction, we use a result of Pereira and Svaldi which shows that, after shrinking the base, there exists a simultaneous resolution on which the induced foliations on all fibers become uniformly $\delta$-canonical, and hence log canonical; see Definition~\ref{delta-canonical} for the notion of $\delta$-canonicity. Combined with the weaker notion of foliation-adapted log smoothness introduced in Definition~\ref{foliation-adapted log smooth}, this gives sufficient uniform control of the singularities in the family.

Second, the MMP developed here for adjoint foliated structures is a surface result, so it cannot be run on the higher-dimensional total space of the family. Our strategy is therefore to run it on the generic fiber. Before doing so, we need a big adjoint canonical divisor on the generic fiber corresponding to a fixed adjoint parameter. The adjoint coefficient $t$ associated with the original surface may vary among fibers. Hence, bigness on a closed fiber does not automatically imply bigness on the generic fiber. By \cite[Lemma~5.3]{LX25NonalgebraictyNonabundant}, the foliated part of the adjoint canonical divisor is pseudo-effective. We may therefore replace the varying parameter by the fixed value $1-\delta$ while preserving bigness. Effective birationality (Corollary~\ref{effective birationality of general type}) and semicontinuity then show that the corresponding fixed adjoint canonical divisor is big on the generic fiber. This provides a fixed big point in the parameter polytope.

Third, we need to establish bigness on the generic fiber uniformly over the entire parameter polytope. More precisely, whenever a parameter gives a big adjoint canonical divisor on one of the relevant closed fibers, we need the corresponding divisor on the generic fiber to be big as well. In the classical setting, one would usually use invariance of plurigenera \cite[Theorem 1.8]{HMX13OnBirationalAutomorphisms} to pass from the closed fiber to the generic fiber. However, no suitable invariance-of-plurigenera theorem is available for adjoint foliated structures, while semicontinuity only yields an open subset depending on the chosen parameter and therefore does not give a single open subset of the base that works uniformly for all parameters.

The key ingredient is Lemma~\ref{bigness on the generic fiber}, which provides a substitute using the finiteness of good minimal models and ample models established in Theorem~\ref{finiteness of models}, together with specialization of movable curves. Indeed, suppose that the desired statement fails for some parameter. Join this parameter to the fixed big point constructed above, and consider the last point along this segment at which the divisor on the generic fiber remains pseudo-effective. Then the divisor at this point is pseudo-effective but not big. One of the finitely many ample model diagrams then yields a covering family of curves on which the induced divisor has intersection number zero. Comparing intersections along the segment shows that the divisor corresponding to the original parameter has non-positive intersection with the same family of curves. After spreading out and specializing this family to the relevant closed fiber, this contradicts the bigness of the divisor there. Thus every parameter which gives a big divisor on one of the relevant closed fibers also gives a big divisor on the generic fiber.

A further uniformity issue arises when we extend the ample models on the generic fiber back to the family. Both the boundary and the adjoint coefficient vary in a polytope. Consequently, applying openness of ampleness separately to every induced divisor would require treating infinitely many divisors and would not yield a single open subset of the base on which all the models work simultaneously.

We resolve this issue by a finite polyhedral argument. Theorem~\ref{finiteness of models} and Corollary~\ref{finite model decomposition} give finitely many ample model diagrams on the generic fiber, organized into a finite polyhedral decomposition compatible along faces. After replacing the base by a finite cover and shrinking it, we spread out these diagrams. For each polytope, we choose one point in its relative interior and apply openness of ampleness only to the divisor corresponding to this fixed point. Now let any other point in the relative interior be given. It can be written as a positive convex combination of the chosen interior point and a point lying in the relative interior of a proper face. By induction on the dimension and compatibility along faces, the divisor corresponding to the latter point is the pullback of an ample divisor on the model associated with that face, and is therefore nef on the larger model. The divisor corresponding to the given interior point is thus a positive convex combination of an ample divisor and a nef divisor, and hence is ample. It follows that the spread-out diagrams give the ample models on every fiber for all parameters simultaneously. This avoids having to spread out infinitely many ample divisors individually. Finally, by the uniqueness of ample models, the original adjoint foliated surfaces are recovered as fibers of finitely many bounded families.

We then return to the proof of the boundedness theorem. Let $\fA:=(X,\cF,B,t)$ be an adjoint foliated surface appearing in Theorem~\ref{bdd of surfaces}, and let $N$ be the polarization. Since $\KFA$ is nef and $N$ is nef and big, the divisor $\KFA+N$ is big. By Birkar's effective birationality result for adjoint linear series \cite[Corollary 1.2]{Bir23GeometryPolarisedVarieties}, there exist fixed positive integers $m$ and $n$ such that
$|m\KFA+nN|$ defines a birational map. Together with the upper bound on $\vol(\KFA+N)$, this gives log birational boundedness of the adjoint foliated surfaces under consideration.

It remains to upgrade this log birational boundedness to log boundedness by Theorem~\ref{Deriving boundedness from birational boundedness}. For this purpose, we need to incorporate a general element $L\in |m\KFA+nN|$ into the boundary to obtain an adjoint foliated surface of general type. The key point is to prove a uniform positive lower bound for the log canonical threshold of $L$ with respect to $\fA$; see Theorem~\ref{lower bound of lct}. More precisely, there exists a fixed constant $\tau>0$ such
that the adjoint foliated structure $\fA'=(X,\cF,B+\tau(\nL+\frac{1}{1-t}\iL),t)$ still has controlled singularities. Moreover, $K_{\fA'}=\KFA+\tau L$ is big. Thus $\fA'$ is an adjoint foliated surface of general type. Therefore, the boundedness of $\fA$ follows from Theorem~\ref{Deriving boundedness from birational boundedness}.

\vspace{0.5cm}
{\textbf{\sffamily{Structure of the paper.}}} This paper is organized as follows. In \S\ref{Section:Preliminaries}, we collect preliminary results on foliations, adjoint foliated surfaces, types of birational models, and boundedness. In \S\ref{Section:MMP}, we establish the MMP for klt adjoint foliated surfaces and prove the existence of good minimal models and ample models. In \S\ref{Section:Finiteness}, we prove the finiteness of good minimal models and ample models as the boundary and the adjoint coefficient vary in a polytope, together with a uniform generic-fiber bigness result. Finally, in \S\ref{Section:Boundedness}, we establish effective birationality, derive boundedness from birational boundedness, and prove our main boundedness theorem and its applications to volumes.

\vspace{0.5cm}
{\textbf{\sffamily{Acknowledgement.}}} 
We would like to thank Zhengyang Cui, Calum Spicer, Roberto Svaldi, and Lingyao Xie for many valuable conversations, suggestions, and comments. We are particularly grateful to Calum Spicer and Roberto Svaldi for answering a question concerning \cite{SS23EffectiveGenerationFoliatedSurfaces}. We are also grateful to Lingyao Xie for answering a question concerning \cite{Cas+25OnFiniteGenerationAndBoundedness}. The first and second authors are supported by the KIAS Individual Grants (MG102901 and MG106901, respectively) at Korea Institute for Advanced Study.

\section{Preliminaries}\label{Section:Preliminaries}
\subsection{Coefficients}
If $I\subseteq\bR$ and $B$ is an $\bR$-divisor, we write $B\in I$ if every non-zero coefficient of $B$ belongs to $I$.
\subsection{Foliations}
Let $X$ be a normal variety. 
A \emph{foliation} $\cF$ is a coherent subsheaf $T_\cF$ of the tangent sheaf $T_X$ such that
\begin{enumerate}
\item $T_\cF$ is saturated, i.e., $T_X/T_\cF$ is torsion-free, and
\item $T_\cF$ is closed under the Lie bracket.
\end{enumerate}

Let $r=\on{rank}(T_\cF)$ be the \emph{rank} of the foliation. 
The \emph{canonical divisor} $K_\cF$ is a Weil divisor on $X$ such that $\cO_X(-K_\cF)\cong\det T_\cF$. 

Let $\pi: Y \dashrightarrow X$ be a dominant rational map between normal varieties and $\cF$ be a foliation on $X$. 
We denote by $\pi^{-1}\cF$ the \emph{pullback foliation} on $Y$ (see, for example, \cite[Section 3.2]{Dru21Codim1FolNumTriCanClass}). 
If $f: X\dashrightarrow X'$ is birational, then $f_*\mathcal{F}$ represents the transformed foliation on $X'$ induced by $f^{-1}$. 

Let $X^\circ$ be the open subset of $X$ such that $\cF\vert_{X^\circ}$ is a subbundle of $T_{X^\circ}$. 
A \emph{leaf} $L$ is a maximal connected and immersed holomorphic submanifold $L\subseteq X^\circ$ such that $T_L = \cF\vert_L$. 

A foliation $\cF$ is called \emph{algebraically integrable} if all of its leaves are algebraic. 
Equivalently, an algebraically integrable foliation $\cF$ on $X$ is induced by a dominant rational map $f: X\dashrightarrow Y$ for some normal variety $Y$ (see, for example, \cite[Sections 3.2 and 3.6]{Dru21Codim1FolNumTriCanClass}).

\begin{definition}[Singular locus]
Let $\cF$ be a foliation of rank $r$ on a normal variety $X$. 
We obtain a morphism $\varphi: (\Omega_X^r)^{**} \to \cO_X(K_\cF)$ by taking the double dual of the $r$-th wedge product of $\Omega_X^{**}\to \cF^*$, which is induced by the inclusion $\cF\subseteq \cT_X$. 
We define the \emph{singular locus} of $\cF$, denoted by $\Sing(\cF)$, as the cosupport of the image of $\varphi': (\Omega_X^r\otimes\cO_X(-K_\cF))^{**} \to \cO_X$. 
\end{definition}

\begin{definition}[Invariance]\label{defn_invariance}
Let $\cF$ be a foliation on a normal variety $X$. 
\begin{enumerate}
    \item We say that a subvariety $S\subseteq X$ is $\cF$-\emph{invariant} if for any open subset $U\subseteq X$ and any section $\partial\in H^0(U,T_\cF)$, we have 
    $\partial(\cI_{S\cap U})\subseteq\cI_{S\cap U}$, where $\cI_{S\cap U}$ is the ideal sheaf of $S\cap U$. 
    \item For any birational morphism $\pi:Y\to X$ and for any prime divisor $D$ on $Y$, we define $\epsilon_\cF(D)=0$ if $D$ is $\pi^{-1}\cF$-invariant, and $\epsilon_\cF(D)=1$ if $D$ is not $\pi^{-1}\cF$-invariant. 
    \item For any $\bR$-divisor $D=\sum a_iD_i$ on $X$, where $D_i$ are prime divisors, we define $D^{\ninv}:=\sum\epsilon_\cF(D_i)a_iD_i$, and $D^{\inv}:=D-D^{\ninv}$. 
\end{enumerate}
\end{definition}

\begin{definition}[Non-dicritical]
    Let $\cF$ be a foliation of rank one on a normal surface $X$. 
    We say $\cF$ is \emph{non-dicritical} if $\epsilon_\cF(E)=0$ for all exceptional divisors $E$ over $X$. 
\end{definition}

\subsection{Adjoint foliated surfaces}
The following definition follows from {\cite{Cas+25OnFiniteGenerationAndBoundedness}}: 
\begin{definition}\label{defn:adjoint_fol_str}
    We say $\fA:=(X,\cF,B,t)$ is a \emph{sub-adjoint foliated structure} if the following conditions hold:
\begin{enumerate}
    \item $X$ is a normal projective variety, 
    \item $\cF$ is a foliation on $X$, 
    \item $B$ is an $\bR$-divisor, 
    \item $t\in [0,1]$, and
    \item the adjoint log canonical divisor $K_\fA:= t(K_\cF+\nB) + (1-t)(K_X+B)$ is $\bR$-Cartier.
\end{enumerate}
When $B=0$, we write $(X,\cF,t)$ instead of $(X,\cF,0,t)$.

If $B$ is effective, we say $\fA$ is an \emph{adjoint foliated structure}. 
In particular, $\fA=(X,\cF,B,t)$ is called an \emph{adjoint foliated surface} if $\dim X=2$ and $\rank\cF=1$. 
\end{definition}

\begin{remark}
    Note that Definition~\ref{defn:adjoint_fol_str} is equivalent to the notion \emph{normalized adjoint foliated structure} in \cite[Definition 3.12]{CLSS26BirationalBoundednessOfStableFamilies}. 
\end{remark}

\begin{definition}
Let $\fA=(X,\cF,B,t)$ be an adjoint foliated structure. Let $E$ be a prime divisor over $X$. Let $\pi:X'\to X$ be a birational morphism such that $E$ is a prime divisor on $X'$, and denote $\cF':=\pi^{-1}\cF$. We define the \emph{discrepancy} of $E$ with respect to $\fA$ to be \[a(E,\fA):=\on{mult}_E (tK_{\cF'}+(1-t)K_{X'}-\pi^*K_\fA),\]
and define the \emph{log discrepancy} of $E$ with respect to $\fA$ to be 
\[A(E,\fA):=a(E,\fA)+t\epsilon_\cF(E)+(1-t).\] 

We say $\fA$ is \emph{canonical} (resp. \emph{terminal}) if $a(E,\fA)\geq 0$ (resp. $>0$) for any exceptional divisor $E$ over $X$. We say $\fA$ is \emph{lc} (resp. \emph{klt}, \emph{$\epsilon$-lc}) if $A(E,\fA)\geq 0$ (resp. $>0$, $\geq\epsilon$) for any prime divisor $E$ over $X$. 

If $t=0$, we write $\fA$, $a(E,\fA)$, $A(E,\fA)$ as $(X,B)$, $a(E,X,B)$, $A(E,X,B)$, respectively. 

If $t=1$, we write $\fA$, $a(E,\fA)$, $A(E,\fA)$ as $(\cF,\nB)$, $a(E,\cF,\nB)$, $A(E,\cF,\nB)$, respectively. 
\end{definition}

\begin{prop}\label{control singularities}
Let $\fA:=(X,\cF,B,t)$ be a klt adjoint foliated surface. Then $(X,B)$ is klt and
\[
A(E,X,B)\geq A(E,\cF,\nB)
\]
for any prime divisor $E$ over $X$. In particular, if $\fA$ is $\epsilon$-lc for some positive real number $\epsilon$, then $(X,B)$ is $\epsilon$-lc.

\begin{proof}
For any prime divisor $E$ over $X$, we have
\[A(E,\fA)=tA(E,\cF,\nB)+(1-t)A(E,X,B).\]
We prove that $A(E,X,B)\geq A(E,\cF,\nB)$. It then follows that \[A(E,X,B)\geq A(E,\fA)>0,\] and hence $(X,B)$ is klt. Moreover, if $\fA$ is $\epsilon$-lc, then
\[A(E,X,B)\geq A(E,\fA)\geq\epsilon,\]
so $(X,B)$ is $\epsilon$-lc.

\begin{enumerate}[label=\textsl{Step} \arabic*., wide=13pt, itemsep=13pt]

\item In this step, we first consider the case $t=1$.

Let $D$ be a prime divisor on $X$. Since $\fA=(\cF,\nB)$ is klt, we have
\[0\leq-a(D,\cF,\nB)<\epsilon_\cF(D).\]
Thus $\epsilon_\cF(D)=1$. Hence no prime divisor on $X$ is $\cF$-invariant, and therefore $B=\nB$. It follows that
\[A(D,X,B)=A(D,\cF,\nB).\]

For any exceptional divisor $E$ over $X$, we have
\[a(E,\cF)\geq a(E,\cF,B)>-\epsilon_\cF(E),\]
so $\cF$ is log canonical. We claim that $\cF$ is non-dicritical. Otherwise, the minimal resolution of $\cF$ contains a non-$\cF$-invariant exceptional divisor $E_0$. By the classification of log canonical foliation singularities on surfaces \cite[Theorem~1.1]{Che23LogCanonicalFoliationSurfaces}, $E_0$ is a strictly log canonical place, that is, $a(E_0,\cF)=-1$. This contradicts $a(E_0,\cF)>-1$. Therefore, $\cF$ is non-dicritical.

Consequently, $\epsilon_\cF(E)=0$ for every exceptional divisor $E$ over $X$, and hence
\[A(E,\cF,\nB)=a(E,\cF,B).\]
By \cite[Proposition~5.1(1)]{CS26RecentProgress}, we obtain
\[A(E,X,B)\geq a(E,\cF,B)=A(E,\cF,\nB).\]
This proves the desired inequality when $t=1$.

\item In this step, we consider the case $t<1$.

Let $D$ be a prime divisor on $X$. If $D$ is not $\cF$-invariant, then $B$ and $\nB$ have the same coefficient along $D$, and hence
\[A(D,X,B)=A(D,\cF,\nB).\]
If $D$ is $\cF$-invariant, then $A(D,\cF,\nB)=0$. On the other hand,
\[0<A(D,\fA)=(1-t)A(D,X,B),\]
so $A(D,X,B)>0=A(D,\cF,\nB)$. Thus the desired inequality holds for every prime divisor on $X$.

It remains to consider exceptional divisors over $X$. By \cite[Theorem~8.1]{CS21MMPCorankOneThreefolds}, $(\cF,\nB)$ admits an F-dlt modification $\pi\colon W\to X$ such that $W$ is $\bQ$-factorial and the induced foliation $\cG:=\pi^{-1}\cF$ is non-dicritical. Let $B_W$ be the strict transform of $B$ on $W$, and let $E$ be the sum of reduced $\pi$-exceptional divisors. For any exceptional divisor $P$ over $X$, the definition of an F-dlt modification gives
\[a(P,\cF,\nB)\leq a(P,\cG,\nB_W+\nE).\]
We divide the proof into two cases:

\begin{enumerate} [label=\textsl{Case} \arabic*., wide=13pt, itemsep=13pt]
\item  Assume that $P$ is a divisor on $W$. Since $(\cG,\nB_W+\nE)$ is F-dlt, we have
\[a(P,\cG,\nB_W+\nE)=-\epsilon_\cF(P).\]
Therefore,
\[a(P,\cF,\nB)\leq-\epsilon_\cF(P),\]
and hence $A(P,\cF,\nB)\leq0$. Since $\fA$ is klt, we have
\[0<A(P,\fA)=tA(P,\cF,\nB)+(1-t)A(P,X,B).\]
Combining the inequalities above, we obtain \[A(P,X,B)>0\geq A(P,\cF,\nB).\]
In particular, we may write
\[K_W+B_W+E=\pi^*(K_X+B)+G,\]
where $G$ is an effective $\pi$-exceptional divisor.

\item Assume that $P$ is exceptional over $W$. Since $\cG$ is non-dicritical, we have $\epsilon_\cF(P)=\epsilon_\cG(P)=0$. By \cite[Proposition~5.1(1)]{CS26RecentProgress}, we obtain
\[
\begin{aligned}
A(P,\cF,\nB)
&=a(P,\cF,\nB)\\
&\leq a(P,\cG,\nB_W+\nE)\\
&\leq A(P,W,B_W+E)\\
&\leq A(P,X,B).
\end{aligned}
\]
Here the last inequality follows from the effectivity of $G$.

Thus $A(E,X,B)\geq A(E,\cF,\nB)$ for every prime divisor $E$ over $X$. As explained at the beginning of the proof, it follows that $(X,B)$ is klt. If $\fA$ is $\epsilon$-lc, then $(X,B)$ is also $\epsilon$-lc.
\end{enumerate}
\end{enumerate}
\end{proof}
\end{prop}

\begin{definition}[{\cite[Definitions~4.1 and~4.2]{PS19EffectiveAlgebraicIntegration}}]\label{delta-canonical}
Let $\delta$ be a positive real number. Let $\cF$ be a foliation on a normal projective surface $X$. We say that $\cF$ is \emph{$\delta$-canonical} if $(X,\cF,t)$ is a canonical adjoint foliated surface for any $0\leq t\leq 1-\delta$.
\end{definition}

\begin{remark}\label{delta-canonical implies lc}
By \cite[Proposition~4.9]{PS19EffectiveAlgebraicIntegration}, any $\delta$-canonical foliation on a smooth projective surface with $0<\delta<\frac14$ is log canonical.
\end{remark}

Although one cannot in general construct a simultaneous resolution of a family such that the induced foliation on every fiber has canonical singularities, the following lemma shows that, after shrinking the base, one can modify the family uniformly so that the induced foliation on every fiber is $\delta$-canonical.

\begin{lem}[{\cite[Proposition~7.4]{PS19EffectiveAlgebraicIntegration}}]\label{delta-canonical family}
Let $\pi:\cX\to Z$ be a family of smooth projective surfaces, and let $\cF$ be a rank one foliation on $\cX$ tangent to the fibers of $\pi$. Fix a real number $0<\delta<\frac14$. Then, after replacing $Z$ by a nonempty open subset, there exists a finite sequence of blow-ups along multi-sections $f:\cY\to\cX$ such that, setting $\cG:=f^{-1}\cF$, the foliation $\cG_z$ is $\delta$-canonical, and hence log canonical, for every closed point $z\in Z$.
\end{lem}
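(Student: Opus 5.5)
This lemma is cited from \cite[Proposition~7.4]{PS19EffectiveAlgebraicIntegration}. The natural proof follows their strategy: first fix the geometric generic fiber, then resolve it into a $\delta$-canonical foliation, and finally spread the resolution out over an open subset of the base.

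Let $\eta$ be the generic point of $Z$, and consider the foliation $\cF_{\bar\eta}$ on the smooth surface $\cX_{\bar\eta}$. By Seidenberg's theorem, finitely many point blow-ups make $\cF_{\bar\eta}$ reduced, hence canonical. Reduced foliations alone are not $\delta$-canonical in general: the adjoint structure $(X,\cF,t)$ can fail to be canonical for $t<1-\delta$ at singular points whose local eigenvalue data make the foliated discrepancy $a(E,\cF)$ equal to $0$. The key local computation is the following. At a reduced singularity, blowing up gives $a(E,\cF)=0$ or $a(E,\cF)=1$ depending on invariance, and $a(E,X)=1$. So for $0\le t\le 1$ one gets $a(E,(X,\cF,t))=t\,a(E,\cF)+(1-t)\ge 0$ for the blow-up divisor. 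Iterating with valuations shows that every exceptional divisor over a reduced singularity has adjoint discrepancy at least $(1-t)\cdot(\text{something})\ge 0$. The failure of $\delta$-canonicity therefore comes only from the dicritical and non-reduced part.

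Accordingly, I would follow Pereira--Svaldi. On $\cX_{\bar\eta}$, perform the finitely many blow-ups needed to reach a model on which the finitely many $\delta$-bad valuations have been extracted. Finiteness holds because $\delta$-canonical thresholds are detected by a bounded number of blow-ups, with the bound depending on $\delta$ and the multiplicity of $\cF$ at its singular points. After a finite base change, so that all centers are defined over $k(\eta)$, these blow-ups descend to blow-ups of $\cX$ along multi-sections, namely the closures of the points. Shrinking $Z$ makes these multi-sections étale over $Z$, and the sequence of blow-ups $f\colon\cY\to\cX$ is then fiberwise the same sequence of point blow-ups.

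The main obstacle is to show that $\delta$-canonicity of $\cG_{\bar\eta}$ propagates to every closed fiber $\cG_z$. For reduced singularities, this does not follow from general constructibility, since eigenvalues can vary and new resonances can appear on a dense set of fibers. This is exactly why one uses $\delta$-canonicity instead of canonicity. I would argue that, after shrinking, the singular scheme of $\cG$ is étale over $Z$ with constant local multiplicity and constant linear parts up to the variation of eigenvalues. The $\delta$-canonicity condition at such a point is then an open condition insensitive to eigenvalue resonances, because the adjoint weight $1-t\ge\delta$ compensates the bounded discrepancy drop. Given this, the last assertion is immediate: by Remark~\ref{delta-canonical implies lc}, $\delta<\frac14$ forces each $\cG_z$ to be log canonical.
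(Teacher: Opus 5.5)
Your overall architecture is the right one, and it is the strategy behind the cited \cite[Proposition~7.4]{PS19EffectiveAlgebraicIntegration}: resolve the generic fiber with Seidenberg, spread the blow-ups out along multi-sections, then show that $\delta$-canonicity passes from $\eta$ to every closed fiber after shrinking.

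\textbf{The gap.} The last step is the entire content of the lemma, and you only assert it. The reason you give is also false: $\delta$-canonicity is \emph{not} insensitive to resonances. In the family generated by $x\partial_x+\lambda y\partial_y$ the generic fiber is reduced. Yet at $\lambda=1$ the singularity is radial, and the exceptional curve $E$ of the blow-up is non-invariant with $a(E,\cF)=-1$ and $a(E,X)=1$. So $a(E,(X,\cF,t))=1-2t<0$ for $t>\frac12$.

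What makes the condition open is a finiteness argument that you have not given. It runs as follows.
\begin{enumerate}
\item After shrinking, the reduced singular locus of $\cG$ is a disjoint union of \'etale multi-sections $S$.
\item Along a multi-section of saddle-nodes, the determinant of the linear part vanishes identically and the trace does not, so these points stay saddle-nodes.
\item Along a multi-section of non-degenerate points, $\operatorname{tr}^2/\det$ is a regular function on $S$.
\item Suppose the eigenvalue ratio at $z$ is $p/q\in\bQ_{>0}$ with $p,q$ coprime. Then the germ lies in the Poincar\'e domain, so it is either canonical (a Jordan block or a resonant Poincar\'e--Dulac normal form) or linearizable to $px\partial_x+qy\partial_y$.
\item In the linearizable case the germ is lc. The only non-invariant exceptional divisor is that of the $(p,q)$-weighted blow-up, with $a(E,\cF)=-1$ and $a(E,X)=p+q-1$ \cite[Corollary~4.10]{PS19EffectiveAlgebraicIntegration}.
\item Hence $\delta$-canonicity fails at $z$ only if $p+q-1<\frac{1-\delta}{\delta}$, that is, only for finitely many values of $\operatorname{tr}^2/\det$.
\item None of these values is the value at $\eta$, because the generic fiber is reduced. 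So each is attained on a proper closed subset of $S$, and removing the images in $Z$ of these finitely many subsets proves the lemma.
\end{enumerate}
This is the openness statement \cite[Lemma~7.3]{PS19EffectiveAlgebraicIntegration}. Without it, or an equivalent argument, your proposal does not prove the statement.

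\textbf{Smaller corrections.}
\begin{itemize}
\item Your claim that reduced foliations need not be $\delta$-canonical is wrong, and it contradicts your own computation two sentences later. On a smooth surface $a(E,X)\geq 1$ for every exceptional $E$. So if $\cF$ is canonical, then $a(E,(X,\cF,t))=t\,a(E,\cF)+(1-t)a(E,X)\geq 0$ for all $t\in[0,1]$. Seidenberg's theorem alone therefore makes the generic fiber $\delta$-canonical. The extra step of extracting ``$\delta$-bad valuations'', with its unexplained finiteness claim, should be dropped.
\item The statement only allows shrinking $Z$, not a finite base change. Seidenberg's algorithm (blow up every non-reduced singular point, and repeat) is Galois-invariant. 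So it can be run directly on $\cX_\eta$ by blowing up closed points, and their closures are exactly the multi-sections in the statement.
\end{itemize}
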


\begin{definition}[Foliation-adapted log smooth]\label{foliation-adapted log smooth}
Let $X$ be a normal projective surface, $\cF$ be a rank one foliation, and $B$ be an effective $\bR$-divisor on $X$. We say that $(X,B)$ is \emph{foliation-adapted log smooth} (or \emph{$\cF$-adapted log smooth}) if
\begin{enumerate}
    \item $(X,B)$ is log smooth,
    \item every irreducible component of $\Supp(\nB)$ is everywhere transverse to $\cF$, and
    \item the irreducible components of $\Supp(\nB)$ are pairwise disjoint.
\end{enumerate}
\end{definition}

\begin{remark}
The reason for introducing this notion is that the usual notion of foliated log smoothness, as in \cite[Definition 3.1]{CS21MMPCorankOneThreefolds}, is too strong in families. As illustrated in \cite[\S7.2]{PS19EffectiveAlgebraicIntegration}, even after a stratification of the base, one cannot expect a simultaneous resolution whose fibers are all foliated log smooth, since non-canonical foliation singularities may occur along a Zariski dense subset of the base. Our notion of foliation-adapted log smoothness avoids this issue: it only controls the relative position of the boundary and the foliation. These conditions are open and, after passing to a stratification of the base and making suitable birational modifications over each stratum, can be achieved uniformly on all fibers. 
\end{remark}

\begin{lem}\label{adjoint terminal}
Let $0<\epsilon, \delta\leq 1$ be real numbers. Let $\fA:=(X,\cF,B,t)$ be an adjoint foliated surface. Assume that
    \begin{enumerate}
        \item $t\leq 1-\delta$,
        \item the coefficients of $B$ are at most $1-\epsilon$,
        \item $(X,B)$ is $\cF$-adapted log smooth, and
        \item $\cF$ is lc.
    \end{enumerate}
    Then $\fA$ is $\delta\epsilon$-lc. 
    If we furthermore assume that 
    \begin{enumerate}
        \item[(5)] the irreducible components of $\Supp B$ are pairwise disjoint, and
        \item[(6)] $\cF$ is $\delta_0$-canonical, where $0<\delta_0<\min\{\frac{1}{4},\frac{\epsilon\delta}{\epsilon\delta+1-\delta}\}$,
    \end{enumerate}
  then $\fA$ is terminal. 

\begin{proof}
\begin{enumerate}[label=\textsl{Step} \arabic*., wide=13pt, itemsep=13pt]
\item In this step, we prove that $\fA$ is $\delta\epsilon$-lc under assumptions~(1)--(4).

We first show that $(\cF,\nB)$ is log canonical. Indeed, near any closed point of $\Sing(\cF)$, the divisor $B^{\mathrm{ninv}}$ is empty, so
the assertion follows from the log canonicity of $\cF$. On the other hand, since $(X,B)$ is $\cF$-adapted log smooth, the pair $(\cF,B^{\mathrm{ninv}})$ is formally
locally toroidal away from $\Sing(\cF)$. Since all
the coefficients of $B^{\mathrm{ninv}}$ are at most $1$, it follows from \cite[Theorem~5.7]{CC25ToricToroidalFoliations} that
$(\cF,B^{\mathrm{ninv}})$ is log canonical.

Moreover, since $(X,B)$ is log smooth and the coefficients of $B$ are at
most $1-\epsilon$, the pair $(X,B)$ is $\epsilon$-lc. Hence, for every
prime divisor $E$ over $X$,
\begin{align*}
A(E,\fA)
&=tA(E,\cF,B^{\mathrm{ninv}})+(1-t)A(E,X,B) \\
&\geq (1-t)\epsilon
\geq \delta\epsilon.
\end{align*}
Therefore, $\fA$ is $\delta\epsilon$-lc.

\item In this step, we prove that $\fA$ is terminal under the additional assumptions~(5) and~(6).

Let $E$ be an exceptional divisor over $X$. Since the irreducible components of $\Supp B$ are pairwise disjoint and have coefficients strictly smaller than one, the
pair $(X,B)$ is terminal.

Suppose first that $E$ is $\cF$-invariant. Since
$(\cF,B^{\mathrm{ninv}})$ is log canonical, we have
\[a(E,\cF,B^{\mathrm{ninv}})\geq 0.\]
It follows that
\[a(E,\fA)=ta(E,\cF,B^{\mathrm{ninv}})+(1-t)a(E,X,B)>0.\]

Suppose now that $E$ is not $\cF$-invariant. Then
$P:=c_X(E)\in\Sing(\cF)$, and $B^{\mathrm{ninv}}$ is empty near $P$. Since $E$ is not $\cF$-invariant, the germ of $\cF$ at $P$ is dicritical, and hence not canonical at $P$ by \cite[Theorem~3.24]{CS26RecentProgress}. Since $\cF$ is $\delta_0$-canonical, it follows from \cite[Corollary~4.10 and Definition~4.6]{PS19EffectiveAlgebraicIntegration} that the germ is analytically conjugate
to the foliation generated by
\[px\frac{\partial}{\partial x}+qy\frac{\partial}{\partial y},\]
where $p,q$ are relatively prime positive integers, and $E$ is the unique
non-invariant exceptional divisor over $P$. Moreover, $E$ is extracted by
the weighted blow-up of weight $(p,q)$ and
\[a(E,X)=p+q-1\geq\frac{1-\delta_0}{\delta_0}.\]

Since the components of $\Supp B$ are pairwise disjoint, at most one component of $B$ passes through $P$. If such a component exists, then it is $\cF$-invariant and is a smooth separatrix. After interchanging $p$ and $q$ if necessary, we may assume that its order along $E$ is $p$. Thus,
\begin{align*}
a(E,X,B)
&\geq p+q-1-(1-\epsilon)p \\
&=\epsilon p+q-1 \\
&\geq\epsilon(p+q-1) \\
&\geq\frac{\epsilon(1-\delta_0)}{\delta_0}
>\frac{1-\delta}{\delta}.
\end{align*}
Since $\cF$ is log canonical, $a(E,\cF,B^{\mathrm{ninv}})=a(E,\cF)\geq-1$. Therefore,
\begin{align*}
a(E,\fA)
&\geq -t+(1-t)a(E,X,B) \\
&>-t+\frac{(1-t)(1-\delta)}{\delta} \\
&=\frac{1-\delta-t}{\delta}
\geq 0.
\end{align*}
Hence $\fA$ is terminal.
\end{enumerate}
\end{proof}
\end{lem}

\subsection{Types of models}
We recall some standard terminology concerning birational models of divisors. Our conventions follow \cite[\S3.6]{BCHM10} and \cite[\S2]{HX13ExistenceLogCanonicalClosure}. 

\begin{definition}
Let $f:X\dashrightarrow Y$ be a birational map between two normal projective varieties which does not extract any divisor. Let $D$ be an $\bR$-Cartier divisor on $X$ such that $D_Y:=f_*D$ is $\bR$-Cartier.

We say that $f$ is \emph{$D$-non-positive} if there exists a common
resolution $p:W\to X$ and $q:W\to Y$ of $f$ such that
\[p^*D=q^*D_Y+E,\]
where $E\geq 0$ is $q$-exceptional.

We say that $f$ is \emph{$D$-negative} if it is $D$-non-positive and $\Supp E$ contains the strict transform of every $f$-exceptional divisor.
\end{definition}

\begin{definition}
Let $f:X\dashrightarrow Y$ and $D$ be as above.

We say that $Y$ is a \emph{minimal model} of $D$ if $Y$ is
$\bQ$-factorial, $f$ is $D$-negative, and $D_Y$ is nef. If, in addition, $D_Y$ is semi-ample, then we say that $Y$ is a
\emph{good minimal model} of $D$.
\end{definition}

\begin{definition}
Let $D$ be an $\bR$-Cartier divisor on a normal projective variety $X$,
and let $g:X\dashrightarrow Z$ be a rational map to a normal projective
variety.

We say that $Z$ is an \emph{ample model} of $D$ if there exist a good minimal model $f:X\dashrightarrow Y$ of $D$, a contraction morphism $h:Y\to Z$, and an ample $\bR$-Cartier divisor $H$ on $Z$ such that
$g=h\circ f$ and $f_*D\sim_{\bR}h^*H$.
\end{definition}

\begin{definition} 
Let $\fA:=(X,\cF,B,t)$ be an adjoint foliated structure. Let $f:X\dashrightarrow Y$ be a birational map between two normal projective varieties which does not extract any divisor. We say that $Y$ is a \emph{minimal model} (resp. \emph{good minimal model}) of $\fA$ if $Y$ is a minimal model (resp. good minimal model) of $\KFA$. Let $g:X\dashrightarrow Z$ be a rational map to a normal projective variety. We say that $Z$ is the \emph{ample model} of $\fA$ if $Z$ is the ample model of $\KFA$. If $\KFA$ is big, then the ample model of $\fA$ is also called the \emph{log canonical model} of $\fA$. 
\end{definition}

\subsection{Boundedness}

We recall the notions of boundedness and log boundedness for adjoint foliated structures; see \cite[\S2.19]{Bir19AntiPluricanonicalSystemsFano} and \cite[Definition~3.31]{Cas+25OnFiniteGenerationAndBoundedness}.

\begin{definition}\label{def of boundedness}
Let $\cP$ be a set of adjoint foliated structures
$\fA=(X,\cF,B,t)$. We say that $\cP$ is
\emph{log birationally bounded} (resp. \emph{log bounded})
if there exist finitely many flat projective morphisms
$f_i:\cX_i\to Z_i$, $i=1,\dots,N$, between normal varieties
with normal fibers, reduced divisors $\cE_i$ on $\cX_i$
containing no fiber, and foliations $\cG_i$ on $\cX_i$
satisfying the following:
\begin{enumerate}
\item For every closed point $z\in Z_i$, we have
\[T_{\cX_i/Z_i}|_{\cX_{i,z}}\simeq T_{\cX_{i,z}}.\]
\item We have $T_{\cG_i}\subseteq T_{\cX_i/Z_i}$, and for
every closed point $z\in Z_i$, the inclusion
\[T_{\cG_i}|_{\cX_{i,z}}\subseteq T_{\cX_{i,z}}\]
defines a foliation $\cF_{i,z}$ on $\cX_{i,z}$.
\item For every $\fA=(X,\cF,B,t)\in\cP$, there exist
$i\in\{1,\dots,N\}$, a closed point $z\in Z_i$, and a
birational map (resp. an isomorphism)
$\phi:\cX_{i,z}\dashrightarrow X$ such that
$\phi_*\cF_{i,z}=\cF$, and $\cE_{i,z}$ contains the support
of $\phi^{-1}_*B$ and every $\phi$-exceptional divisor
(resp. $\cE_{i,z}$ coincides with the support of
$\phi^{-1}_*B$).
\end{enumerate}

When $t=0$ for every $\fA=(X,\cF,B,t)\in\cP$, we simply say that the pairs $(X,B)$ form a log birationally bounded (resp. log bounded) family.
\end{definition}

The following proposition is an easy consequence of \cite[Proposition 3.36]{Cas+25OnFiniteGenerationAndBoundedness}, which provides a useful criterion for log boundedness of adjoint foliated structures in terms of uniform bounds on degrees with respect to a very ample divisor.

\begin{prop}\label{bdd criterion}
Let $d,r$ be positive integers, and let $\cP$ be a set of
$d$-dimensional adjoint foliated structures. Assume that for every $\fA=(X,\cF,B,t)\in\cP$, there exists a very ample divisor $A$ on $X$ such that
\begin{enumerate}
\item $A^d\leq r$,
\item $A^{d-1}\cdot\Supp B\leq r$, and
\item $A^{d-1}\cdot K_\cF\leq r$.
\end{enumerate}
Then $\cP$ is log bounded.
\end{prop}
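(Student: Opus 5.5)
The plan is to reduce directly to \cite[Proposition~3.36]{Cas+25OnFiniteGenerationAndBoundedness}, which gives log boundedness of a set of foliated structures once, for each member, one can find a very ample divisor whose top self-intersection and whose degrees against the boundary support and against the foliated canonical class are uniformly bounded. The adjoint coefficient $t$ plays no role in the family data of Definition~\ref{def of boundedness}. The family only records the underlying variety, the foliation, and the support of the boundary. So it suffices to produce a bounded family $(\cX_i,\cE_i,\cG_i)\to Z_i$ realizing each triple $(X,\cF,\Supp B)$.

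Step one: fix $\fA=(X,\cF,B,t)\in\cP$ with $A$ as in the statement. Hypothesis (1) bounds the degree of $X$ in the projective space defined by $|A|$. Since $A$ is very ample, $h^0(A)$ is bounded by a function of $A^d$ and $d$, so $X$ embeds in a fixed $\bP^{M}$ with bounded degree. Hypothesis (2) bounds the degree of the reduced divisor $\Supp B$ in that embedding. Chow (or Hilbert scheme) finiteness then yields finitely many families of pairs $(\cX,\cE)\to Z$ parametrizing all $(X,\Supp B)$. Stratifying $Z$, we may assume flatness, normal fibers, and the condition $T_{\cX/Z}|_{\cX_z}\simeq T_{\cX_z}$.

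Step two, which carries the foliation: a rank $r$ foliation is determined by its Pfaff field, that is, a nonzero map $\Omega^r_X\to\cO_X(K_\cF)$, together with the integrability condition. Hypothesis (3) bounds the degree $A^{d-1}\cdot K_\cF$. This bounds the Weil divisor class $K_\cF$ in a bounded family, for instance by bounding the Hilbert polynomial data of the reflexive sheaf $\cO_X(K_\cF)$ twisted by a fixed multiple of $A$. Consequently the relevant Hom spaces $\on{Hom}(\Omega^r_{X},\cO_X(K_\cF))$ vary in a bounded family. The Pfaff fields whose kernels are saturated and integrable form a locally closed subscheme, and the resulting relative foliation $\cG$ on the total space satisfies conditions (1)--(2) after further stratification.

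The main obstacle is the second step. The divisor $K_\cF$ is only a Weil divisor on a possibly singular $X$, so bounding its degree does not immediately bound the rank one reflexive sheaf. One must also ensure that the fiberwise restriction of the relative foliation recovers $\cF$ exactly, with saturation commuting with base change, rather than up to saturation. This is precisely the content handled in \cite[Proposition~3.36]{Cas+25OnFiniteGenerationAndBoundedness}, so in practice the proof reduces to checking that our hypotheses (1)--(3) match its hypotheses verbatim, with $\Supp B$ in place of the boundary. One then observes that the conclusion is insensitive to the value of $t$.
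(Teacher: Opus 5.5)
\textbf{There is a genuine gap in your second step.} That step carries essentially all the content of the proposition.

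An upper bound on $A^{d-1}\cdot K_\cF$ does not by itself place the class of $K_\cF$, or the reflexive sheaf $\cO_X(K_\cF)$, in a bounded family. Already on $X=\bP^1\times\bP^1$ with $A=\cO(1,1)$, every class $\cO(a,-a)$ has degree $0$. Such classes are excluded as foliated canonical classes only by a Pfaff-field argument, which you mention but never use. Your fallback of bounding Hilbert polynomials of $\cO_X(K_\cF+kA)$ does not work either: those polynomials are not determined by the degree $A^{d-1}\cdot K_\cF$.

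Nor can you delegate this to \cite[Proposition~3.36]{Cas+25OnFiniteGenerationAndBoundedness]}. As the paper uses it, that result starts from a divisor on the total space of an already bounded family which induces the foliated canonical class on the fibers. Only then does it bound the foliations. So its hypotheses are not "verbatim" your (1)--(3): the sheaf on the total space has to be produced first. The paper also has to observe that the proof of Proposition~3.36 uses only the rank one reflexive sheaf and not $\bQ$-Cartierness, since $K_\cF$ alone need not be $\bQ$-Cartier here.

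\textbf{The missing idea is to turn the Pfaff field into effectivity.} Your Step 1 (equivalently Birkar's Lemma~2.20) shows that $(X,\Supp(B+A))$ is bounded. Serre's theorem then gives a fixed $s$ such that $\Omega^m_X(sA)$ is globally generated for $1\le m\le d$. Let $m=\rank\cF$. The Pfaff field $\Omega^m_X\to\cO_X(K_\cF)$ is generically surjective, so $H^0(X,\cO_X(K_\cF+sA))\neq0$. This gives an effective integral divisor $D\in|K_\cF+sA|$ with
\[
A^{d-1}\cdot D\le (s+1)r .
\]
Since $D$ is effective of bounded degree, $(X,\Supp(B+D))$ is log bounded. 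After stratifying, $A$ and $D$ come from divisors $\cA,\cD$ on the total space $\cX$. Then $\cO_{\cX}(s\cA-\cD)$ is a rank one reflexive sheaf restricting to $\cO_X(-K_\cF)$ on the relevant fibers. Only at this point can the argument of Proposition~3.36 be run to bound the foliations. Your remaining points (Step 1, and that $t$ plays no role) are fine.
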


\begin{proof}
We follow the proof of \cite[Theorem 2.4.1]{Cas+25OnFiniteGenerationAndBoundedness}.

Take $\fA=(X,\cF,B,t)\in \cP$. By \cite[Lemma 2.20]{Bir19AntiPluricanonicalSystemsFano}, $(X,\Supp(B+A))$ belongs to a bounded family. Hence, by the definition of bounded family and by Serre's property (cf. \cite[Theorem 1.2.6]{Laz04PositivityAlgebraicGeometryI}), there exists a positive integer $s$ depending
only on $d$ and $r$, such that $\Omega_X^m(sA)$ is globally generated for every $1\leq m\leq d$.

Let $m=\rank\cF$. By \cite[\S2.2]{CS25FoliationAdjunction}, the Pfaff field associated with $\cF$ gives a generically surjective morphism
\[\Omega_X^m\longrightarrow\cO_X(K_\cF).\]
It follows that
\[H^0\bigl(X,\cO_X(K_\cF+sA)\bigr)\neq 0.\]
Hence, there exists an effective integral divisor
$D\in|K_\cF+sA|$. We have
\[A^{d-1}\cdot D=A^{d-1}\cdot K_\cF+sA^d\leq (s+1)r.\]
Applying \cite[Lemma~2.20]{Bir19AntiPluricanonicalSystemsFano} again,
we see that the couples
$(X,\Supp(B+D))$ are log bounded.

After stratification, we may assume that $A$ and $D$ are induced by divisors $\cA_i$ and $\cD_i$ on the corresponding total spaces $\cX_i$. Since
\[D\sim K_\cF+sA,\]
we have
\[\cO_X(-K_\cF)\simeq\cO_X(sA-D).\]
Therefore, the rank one reflexive sheaves $\cO_{\cX_i}(s\cA_i-\cD_i)$ restrict to $\cO_X(-K_\cF)$ on the corresponding fibers.

We may now apply the proof of
\cite[Proposition~3.36]{Cas+25OnFiniteGenerationAndBoundedness}
to conclude that the foliated pairs $(X,\cF)$ form a bounded family. Although that proposition is stated for a $\bQ$-Cartier divisor on the total space, its proof only uses the corresponding rank one reflexive sheaf, and hence the same argument applies here. Since the supports of $B$ also belong to a bounded family, it follows that $\cP$ is log bounded.
\end{proof}

\section{MMP for adjoint foliated surfaces}\label{Section:MMP}
In this section, we prove Theorem~\ref{adjoint MMP}, that is, one can run a $\KFA$-MMP for klt adjoint foliated surfaces. We first establish the cone theorem and contraction results for $\KFA$-negative extremal rays, and then use them to prove the existence and termination of the $\KFA$-MMP.

\begin{thm}\label{adjoint MMP}
    Let $\fA:=(X,\cF,B,t)$ be a klt adjoint foliated surface. Then we may run a $\KFA$-MMP $f:X\to Y$. Moreover, the following properties hold:
    \begin{enumerate}
        \item $\fB:=(Y,f_*\cF,f_*B,t)$ is klt.
        \item If $\KFA$ is pseudo-effective, then $\KFB$ is nef.
        \item If $\KFA$ is not pseudo-effective, then $Y$ admits a fibration $Y\to Z$ such that $\rho(Y/Z)=1$ and $-\KFB$ is ample over $Z$.
    \end{enumerate}
\end{thm}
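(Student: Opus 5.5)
The plan follows the sketch in the introduction: first establish a cone theorem for $\KFA$, then contract each $\KFA$-negative extremal ray, and finally show that the singularities are preserved and that the process terminates.

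\textbf{Cone theorem.} By Proposition~\ref{control singularities}, $(X,B)$ is klt, so $X$ is $\bQ$-factorial. Since $\KFA=t(\KCF+\nB)+(1-t)(K_X+B)$, any $\KFA$-negative extremal ray $R$ is either $(K_X+B)$-negative or $(\KCF+\nB)$-negative. Both cones have the expected form:
\begin{enumerate}
\item for $K_X+B$, by the classical cone theorem for klt surfaces;
\item for $\KCF+\nB$, by the foliated cone theorem for surfaces (Spicer--Svaldi / McQuillan / Brunella), which does not require $\cF$ to be lc, since rational curves tangent to $\cF$ or $\cF$-invariant curves span the negative part.
\end{enumerate}
From this we get that $\overline{\mathrm{NE}}(X)_{\KFA<0}$ is locally polyhedral and generated by classes of rational curves.

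\textbf{Contracting a ray $R$.} If $(K_X+B)\cdot R<0$, we contract $R$ with the klt surface contraction theorem. Otherwise $(\KCF+\nB)\cdot R<0$, and there are two cases.
\begin{itemize}
\item \emph{$R$ spanned by a curve with $C^2<0$.} Take the F-dlt modification $\pi:W\to X$ from \cite[Theorem 8.1]{CS21MMPCorankOneThreefolds} and write $K_{\cG}+\nB_W+\nE=\pi^*(\KCF+\nB)-F$ with $F\ge0$. The strict transform $C_W$ is then negative for the lc foliated pair, so it can be contracted by the lc foliated contraction theorem. Next, run an auxiliary MMP over $X$ to contract the remaining $\pi$-exceptional curves. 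A natural choice is a $(K_W+B_W+E)$-MMP over the contraction target, using that $G\ge0$ is exceptional, or alternatively a negativity-lemma argument. The resulting model then receives a morphism from $X$ contracting exactly $C$.
\item \emph{$C^2\ge0$, i.e.\ $R$ nef.} Then $R$ is of fiber type. Apply foliated bend-and-break \cite[Corollary 2.28]{Spi20HigherDimensionalFoliatedMori} to obtain a covering family of rational curves $\ell$ tangent to $\cF$ with class in $R$. For a general member of a covering family, $K_X\cdot\ell<0$: the curves are general, so $B\cdot\ell\ge0$, and adjunction on a general free rational curve gives $K_X\cdot\ell\le -2+(\text{non-negative singular corrections})$. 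Hence $R$ is also $(K_X+B)$-negative, and the classical contraction applies. The same argument also shows that the curves of $R$ are not $\cF$-invariant: as a tangent covering family, $\cF$ is the fibration itself.
\end{itemize}

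\textbf{Preservation and termination.} Klt-ness is preserved because each contraction $\phi$ is $\KFA$-negative: for $\phi^*K_{\fB}=\KFA+aC$ with $a>0$, log discrepancies can only increase. $\bQ$-factoriality holds for klt surfaces. Each divisorial step drops $\rho$ by one, so the MMP terminates, and the outputs (2) and (3) follow as in the classical case.

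The main obstacle I expect is the divisorial contraction of a $\KCF$-negative curve when $\cF$ is not lc. There are two points to check: that $C_W$ is negative for the lc foliated pair on $W$, which uses $F\ge0$ and may require $C\not\subset\Sing$ considerations; and that the auxiliary MMP on $W$ descends to a morphism from $X$ with the correct exceptional locus. For the descent, I would first try running a $(K_W+B_W+E+\eta H)$-MMP relative to the contracted surface and then applying the negativity lemma to identify the output with the contraction of $C$ on $X$.
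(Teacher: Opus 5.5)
Your overall architecture matches the paper's: a cone theorem obtained by splitting $\KFA$, an F-dlt modification for divisorial rays, and bend-and-break for fiber-type rays. However, both non-classical contraction steps have genuine gaps.

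\emph{Fiber-type case.} The bound $K_X\cdot\ell\le -2+(\text{non-negative corrections})$ coming from the minimal resolution gives no sign, because the corrections can be $\ge 2$. Rational klt surfaces with $K_X\equiv 0$ exist, for example $(E\times E)/\mu_3$ with the diagonal action and $j(E)=0$. On such a surface every covering family of rational curves has $K_X\cdot\ell=0$. Moreover, $B\cdot\ell\ge 0$ pushes $(K_X+B)\cdot\ell$ in the wrong direction, so even $K_X\cdot\ell<0$ would not give $(K_X+B)\cdot R<0$.

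The underlying problem is that you never use $(\KCF+\nB)\cdot R<0$ to control $K_X+B$. The paper does this on the F-dlt modification $\pi\colon W\to X$. Since $\cG=\pi^{-1}\cF$ is non-dicritical and covered by invariant rational curves, it is induced by a fibration with $C_W$ a general fiber. Hence $\KCG\cdot C_W=K_W\cdot C_W=-2$, and $C_W$ is disjoint from $\iB_W$ and $E^{\inv}$. Using $K_W+B_W+E=\pi^*(K_X+B)+P$ with $P\ge 0$, this gives
\[(K_X+B)\cdot C\le (K_W+B_W+E)\cdot C_W=(\KCG+\nB_W+\nE)\cdot C_W\le (\KCF+\nB)\cdot C<0.\]
Your remark that the curves in $R$ are not $\cF$-invariant is also backwards: they are leaves.

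\emph{Divisorial case.} You name the obstacle but do not overcome it. After contracting $C_W$ there is no morphism back to $X$, so an MMP ``over $X$'' makes no sense, and an MMP ``over the contraction target'' is circular. A plain $(K_W+B_W+E)$-type MMP also need not be $\pi^*H$-trivial and need not contract every $E_i$.

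The missing ingredients are as follows.
\begin{enumerate}
\item Add $l\pi^*H$ with $l\gg 0$. By boundedness of extremal rays, every step is then $H$-trivial and contracts only some $E_i$.
\item First run the foliated MMP for $\KCH+\nB_V+\nE_V+lH_V$, using the lc foliated contraction theorem, until this divisor becomes nef.
\item Then run the classical MMP for $N_T=\KFC+A_T+lH_T$. Its negative rays are forced to be $(K_T+B_T+E_T)$-negative, and $(T,B_T+E_T)$ is lc by non-dicriticality.
\end{enumerate}
The decisive identity is $N_Y=(l+1)H_Y+\sum\tau_iE_{i,Y}$, where $\tau_i=A(E_i,\fA)>0$ by klt-ness. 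Nefness of $N_Y$ together with $H_Y\cdot\sum\tau_iE_{i,Y}=0$ gives $(\sum\tau_iE_{i,Y})^2\ge 0$. The Hodge index theorem then forces $\sum\tau_iE_{i,Y}\equiv 0$, so every $E_i$ is contracted and the map descends to a morphism $X\to Y$ contracting only $C$.

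Finally, the sign in your klt-preservation step is reversed: the correct relation is $\phi^*K_{\fA'}=\KFA-aC$ with $a>0$.
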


\subsection{Cone theorem}
We begin by proving the cone theorem for klt adjoint foliated surfaces. The main point is that every $\KFA$-negative extremal ray is negative either with respect to the underlying pair or with respect to the foliated pair. Thus the classical cone theorem and the cone theorem for foliated surfaces can be combined to control the $\KFA$-negative part of the cone of curves.

\begin{thm}\label{conethm}
Let $\fA:=(X,\cF,B,t)$ be a klt adjoint foliated surface.
Let $\{R_i\}_{i\in I}$ be the set of all $\KFA$-negative extremal rays. Then
\begin{enumerate}
\item We have \[\overline{\textnormal{NE}}(X)=\overline{\textnormal{NE}}(X)_{\KFA\geq 0}+\sum_{i\in I}R_i.\]

\item Each $R_i$ is spanned by a rational curve $C_i$ with $0<-\KFA\cdot C_i\leq 4$.

\item For any ample divisor $A$ on $X$, the set $I_A:=\{i\in I\mid (\KFA+A)\cdot C_i<0\}$ is finite. In particular, $I$ is countable. Moreover, we have \[\overline{\textnormal{NE}}(X)=\overline{\textnormal{NE}}(X)_{\KFA+A\geq 0}+\sum_{i\in I_A}R_i.\]
\end{enumerate}

\begin{proof}
Since $\fA$ is klt, by Proposition~\ref{control singularities}, $(X,B)$ is a klt pair. In particular, $X$ is klt and hence $\bQ$-factorial. Therefore, $K_X+B$ and $\KCF+\nB$ are $\bR$-Cartier.

For (1), fix an ample divisor $H$ on $X$. Since $H$ is strictly positive on $\overline{\textnormal{NE}}(X)\setminus\{0\}$, the set \[\Sigma:=\{\alpha\in\overline{\textnormal{NE}}(X)\mid H\cdot\alpha=1\}\] is compact and convex. By Minkowski's theorem, every element of $\Sigma$ is a convex combination of finitely many extreme points of $\Sigma$. These extreme points correspond precisely to the extremal rays of $\overline{\textnormal{NE}}(X)$. Therefore, every element of $\overline{\textnormal{NE}}(X)$ is a finite sum of classes lying on extremal rays.

We may group together the summands lying on $\KFA$-non-negative extremal rays. Their sum belongs to
$\overline{\textnormal{NE}}(X)_{\KFA\geq 0}$, while every remaining summand lies on some $R_i$. Hence
\[\overline{\textnormal{NE}}(X)=\overline{\textnormal{NE}}(X)_{\KFA\geq 0}+\sum_{i\in I}R_i.\]

For (2), let $R_i$ be a $\KFA$-negative extremal ray. Since
\[\KFA=t(\KCF+\nB)+(1-t)(K_X+B),\]
the ray $R_i$ is either $(K_X+B)$-negative or
$(\KCF+\nB)$-negative. By
\cite[Theorem~18.2]{Fuj11FundamentalTheoremsLogMMP} and
\cite[Theorem~6.3]{Spi20HigherDimensionalFoliatedMori}, we may choose
a rational curve $C_i$ spanning $R_i$ such that
\[(K_X+B)\cdot C_i\geq -4
\quad\text{and}\quad
(\KCF+\nB)\cdot C_i\geq -4.\]
Therefore,
\[-\KFA\cdot C_i=-t(\KCF+\nB)\cdot C_i-(1-t)(K_X+B)\cdot C_i\leq 4.\]

For (3), let
\[I_{A,X}:=\{i\in I\mid (K_X+B+A)\cdot C_i<0\}\] and
\[I_{A,\cF}:=\{i\in I\mid (\KCF+\nB+A)\cdot C_i<0\}.\]
Since \[\KFA+A=t(\KCF+\nB+A)+(1-t)(K_X+B+A),\]
we have \[I_A\subseteq I_{A,X}\cup I_{A,\cF}.\]
By \cite[Theorem 16.6]{Fuj11FundamentalTheoremsLogMMP}
and \cite[Theorem 6.3]{Spi20HigherDimensionalFoliatedMori},
the sets $I_{A,X}$ and $I_{A,\cF}$ are finite. Hence $I_A$ is finite. Moreover,
\[I=\bigcup_{n\in\bN_{>0}}I_{\frac{1}{n}A},\]
so $I$ is countable. Finally, if $i\notin I_A$, then
$(\KFA+A)\cdot R_i\geq 0$. Therefore, the last equality follows
from (1).
\end{proof}
\end{thm}

\subsection{Divisorial contractions}
We next construct contractions of $\KFA$-negative extremal rays in the birational case. If such a ray is negative with respect to $K_X+B$, then the contraction follows from the classical contraction theorem. Thus the main new case is when the ray is negative with respect to $\KCF+\nB$. In this subsection, we treat the divisorial case.

\begin{thm}\label{divisorial contraction: foliation negative}
    Assume that 
\begin{enumerate}
    \item $\fA:=(X,\cF,B,t)$ is a klt adjoint foliated surface,
    \item $R$ is a $\KFA$-negative and $(\KCF+\nB)$-negative extremal ray, and
    \item $A$ is an ample $\bR$-divisor such that $H:=\KFA+A$ is nef and big, and $H$ vanishes exactly on $R$. 
\end{enumerate}
Then $R$ is spanned by an invariant rational curve $C$, and there is a divisorial contraction $f:X\to Y$ which contracts only $C$.

\begin{proof}
\begin{enumerate} [label=\textsl{Step} \arabic{enumi}., wide=13pt, itemsep=13pt]
\item In this step, we construct an F-dlt modification of $(\cF,\nB)$ and introduce some notation.

By \cite[Theorem 6.3]{Spi20HigherDimensionalFoliatedMori}, $R$ is spanned by an invariant rational curve $C$. By \cite[Theorem 8.1]{CS21MMPCorankOneThreefolds}, $(\cF,\nB)$ admits an F-dlt modification $\pi: W\to X$ such that $W$ is klt, and hence $\bQ$-factorial, and the pullback foliation $\cG$ is non-dicritical. Let $B_W$ and $C_W$ be the strict transform of $B$ and $C$ on $W$, respectively, and let $E=\sum E_i$ be the sum of the reduced $\pi$-exceptional divisors. Let \[\fB:=(W,\cG,B_W+E,t)\] be an adjoint foliated surface on $W$. Since $\fA$ is klt, we may write 
\begin{equation}\tag{$\star$}
\KFB=\pi^*\KFA+\sum \tau_iE_i,
\end{equation} where $\tau_i>0$ for every $i$.

\item In this step, we contract $C_W$ and obtain a new birational model $V$ with $\mu: W\to V$.

Since $\pi^*H$ is nef and big and 
\[\pi^*H\cdot C_W=H\cdot C=0,\] 
the Hodge index theorem implies that $C_W^2<0$. Moreover, by the definition of F-dlt modification, we have 
\begin{align*}
    (\KCG+\nB_W+\nE)\cdot C_W&\leq \pi^*(\KCF+\nB)\cdot C_W\\
    &=(\KCF+\nB)\cdot C<0.
\end{align*}
Therefore, the numerical class of $C_W$ spans a $(\KCG+\nB_W+\nE)$-negative extremal ray. 

Since $(\cG,\nB_W+\nE)$ is F-dlt and $\cG$ is non-dicritical, it follows from \cite[Proposition 5.1(1)]{CS26RecentProgress} that $(W,B_W+E)$ is lc. Therefore, by
\cite[Theorem~2.8]{SS23EffectiveGenerationFoliatedSurfaces}, there
exists a $(\KCG+\nB_W+\nE)$-negative extremal contraction
$\mu:W\to V$ which contracts $C_W$.

Denote the pushforwards of $\cG$, $B_W$, $E$, and $E_i$ by $\cH$,
$B_V$, $E_V$, and $E_{i,V}$, respectively, and set
\[H_V:=\mu_*\pi^*H.\]
By the negativity lemma, $(\cH,\nB_V+\nE_V)$ is lc. Moreover, since
$\cG$ is non-dicritical and $\mu$ only contracts the invariant curve
$C_W$, the induced foliation $\cH$ is also non-dicritical.

\item In this step, we run a partial $(\KCH+\nB_V+\nE_V)$-MMP and obtain a new birational model $T$ with $\nu: V\to T$. 

Let $l$ be a sufficiently large positive number, and let 
\[M_V:=\KCH+\nB_V+\nE_V+lH_V.\]
If $M_V$ is nef, then we stop and define $\nu:V\to T$ to be the identity. In the following we assume that $M_V$ is not nef.

Let $R'$ be an $M_V$-negative extremal ray. Note that on surfaces, the pushforward of a nef $\bR$-divisor is still nef. Hence $H_V$ is nef on $V$. Since $l$ is sufficiently large, by boundedness of the length of extremal rays (cf. \cite[Theorem 6.3]{Spi20HigherDimensionalFoliatedMori}), $R'$ is an $H_V$-trivial extremal ray. Therefore, $R'$ is spanned by $E_{i_0,V}$ for some $i_0$, and 
\[(\KCH+\nB_V+\nE_V)\cdot E_{i_0,V}<0.\]
It follows from \cite[Theorem 6.3]{Spi20HigherDimensionalFoliatedMori} that $E_{i_0,V}$ is $\cH$-invariant.

Applying \cite[Theorem 2.8]{SS23EffectiveGenerationFoliatedSurfaces}, we may contract $E_{i_0,V}$ by a $(\KCH+\nB_V+\nE_V)$-negative extremal contraction. Repeating the process, we run an $M_V$-MMP, which is also a partial $(\KCH+\nB_V+\nE_V)$-MMP, and obtain a birational morphism $\nu:V\to T$ such that 
\[M_T:=\nu_*M_V\] is nef. 

Denote the pushforwards of $\cH$, $B_V$, $E_V$, $E_{i,V}$, and $H_V$
by $\cJ$, $B_T$, $E_T$, $E_{i,T}$, and $H_T$, respectively. We also set
\[A_T:=\nu_*\mu_*\pi^*A\]
and define
\[\fC:=(T,\cJ,B_T+E_T,t).\]

\item In this step, we run a partial $(K_T+B_T+E_T)$-MMP and obtain the desired birational model $Y$ with $\rho: T\to Y$.

By the negativity lemma, $(\cJ,\nB_T+\nE_T)$ is lc. 
Since $\cH$ is non-dicritical and $\nu$ only contracts invariant curves, the induced foliation $\cJ$ is also non-dicritical. Thus, it follows from \cite[Proposition 5.1(1)]{CS26RecentProgress} that $(T,B_T+E_T)$ is lc. 
Set
\[N_T:=\KFC+A_T+lH_T.\]
If $N_T$ is nef, then we stop and define $\rho:T\to Y$ to be the identity. In the following, we assume that $N_T$ is not nef.

Let $R''$ be an $N_T$-negative extremal ray. We have
\begin{align*}
N_T&=t(\KCJ+\nB_T+\nE_T)+(1-t)(K_T+B_T+E_T)+lH_T+A_T\\
&=(1-t)(K_T+B_T+E_T+lH_T)+tM_T+A_T.
\end{align*}
Since $M_T$ and $A_T$ are nef, the ray $R''$ is $(K_T+B_T+E_T+lH_T)$-negative. Since $l$ is sufficiently large, the boundedness of the length of extremal rays implies that $R''$ is $H_T$-trivial. Therefore, $R''$ is spanned by $E_{i_1,T}$ for some $i_1$, and
\[(K_T+B_T+E_T)\cdot E_{i_1,T}<0.\]
Take a sufficiently small positive number $\epsilon$ such that 
\[(K_T+(1-\epsilon)(B_T+E_T))\cdot E_{i_1,T}<0.\] 
Since $T$ is a klt surface and $(T,B_T+E_T)$ is lc, the pair 
\[(T,(1-\epsilon)(B_T+E_T))\]
is klt. Hence, by \cite[Theorem~3.7]{KM98BirationalGeometry}, we may
contract $E_{i_1,T}$ by a
$\bigl(K_T+(1-\epsilon)(B_T+E_T)\bigr)$-negative extremal contraction.

Repeating the process, we can run an $N_T$-MMP, which is also a partial $(K_T+B_T+E_T)$-MMP, and obtain a birational morphism $\rho:T\to Y$. Note that during this process, the foliated pair $(\cJ,\nB_T+\nE_T)$ may cease
to be lc; however, this condition is no longer needed in the remainder of the proof.

Let $H_Y$ and $E_{i,Y}$ be the pushforwards of $H_T$ and $E_{i,T}$. Using $(\star)$, we obtain
\begin{align*}
    N_Y&:=\rho_*N_T\\
    &=\rho_*(\KFC+A_T+lH_T)\\
    &=\rho_*(\nu_*\mu_*\pi^*\KFA+\sum\tau_iE_{i,T}+A_T+lH_T)\\
    &=(l+1)H_Y+\sum\tau_iE_{i,Y}.
\end{align*}
By construction, $N_Y$ is nef.

Recall that all contractions constructed above are $\pi^*H$-trivial. Therefore, \[H_Y\cdot(\sum\tau_iE_{i,Y})=\pi^*H\cdot\sum\tau_iE_i=0.\]
It follows that 
\[\Big(\sum\tau_iE_{i,Y}\Big)^2=N_Y\cdot \Big(\sum\tau_iE_{i,Y}\Big)\geq 0.\]
Since $H_Y$ is nef and big, the Hodge index theorem gives
\[\sum\tau_iE_{i,Y}\equiv0.\]
As $\tau_i>0$ for every $i$, all the divisors $E_i$ are contracted by the morphism $W\to Y$. Consequently, the induced birational map descends to a morphism $f:X\to Y$
which contracts only $C$. This completes the proof.
\end{enumerate}
\end{proof}
\end{thm}

\subsection{Fiber type contractions}
We next treat the case of fiber type. The argument is different from the divisorial case: when the supporting divisor is nef but not big, the bend-and-break method produces a moving family of rational curves spanning the given extremal ray. We then show that this ray is also negative with respect to $K_X+B$, so that the desired contraction follows from the classical contraction theorem.

\begin{thm}\label{Mori fiber space:foliation negative}
    Assume that 
\begin{enumerate}
    \item $\fA:=(X,\cF,B,t)$ is a klt adjoint foliated surface,
    \item $R$ is a $\KFA$-negative extremal ray, and
    \item $A$ is an ample $\bR$-divisor such that $H:=\KFA+A$ is nef and not big, and $H$ vanishes exactly on $R$.
\end{enumerate}
Then $R$ is $(K_X+B)$-negative. In particular, there is a fibration $f:X\to Z$ such that for any irreducible curve $\Sigma$, $f(\Sigma)$ is a point if and only if $[\Sigma]\in R$.

\begin{proof}
Assume for contradiction that
\[(K_X+B)\cdot R\geq 0.\]
Since $K_{\fA}\cdot R<0$, it follows that
\[(K_{\cF}+B^{\ninv})\cdot R<0.\]

\begin{enumerate}[label=\textsl{Step} \arabic{enumi}., wide=13pt, itemsep=13pt]
\item In this step, we prove that through a general point of $X$, there is an invariant rational curve whose numerical class spans $R$.

We first consider the case $H\equiv0$. Since $H$ vanishes exactly on $R$, we have
\[\overline{\textnormal{NE}}(X)=R.\]
Let $L$ be an ample Cartier divisor and let $\Gamma\in|mL|$ be general for $m\gg0$. Then
\[(K_{\cF}+B^{\ninv})\cdot\Gamma<0,\]
and hence $K_{\cF}\cdot\Gamma<0$. By \cite[Theorem~2.27]{Spi20HigherDimensionalFoliatedMori}, varying $\Gamma$, we obtain a moving family of invariant rational curves whose numerical classes span $R$.

We may therefore assume that $H\not\equiv0$. Since $H$ is nef and not big, we have $H^2=0$. Moreover,
\[K_{\fA}\cdot H=-A\cdot H<0.\]
Therefore, either $(K_{\cF}+B^{\ninv})\cdot H<0$ or
$(K_X+B)\cdot H<0$.

In the first case, applying \cite[Corollary~2.28]{Spi20HigherDimensionalFoliatedMori} with $D_1=D_2=H$, we obtain a moving family of invariant rational curves $C$ such that $H\cdot C=0$. Thus $[C]\in R$.

In the second case, we apply \cite[Theorem 6.1]{KMM94LogAbundanceTheoremThreefolds} with $D_1=D_2=H$. Thus, through a general point of $X$, there is a rational curve $C_0$ such that $H\cdot C_0=0$, and hence $[C_0]\in R$.

Since $(K_{\cF}+B^{\ninv})\cdot R<0$, we may choose an ample $\bR$-divisor $A'$ such that
\[H':=K_{\cF}+B^{\ninv}+A'\]
is nef and vanishes exactly on $R$. Since the curves $C_0$ form a moving family and $H'\cdot C_0=0$, the divisor $H'$ is not big. Hence $(H')^2=0$, and
\[(K_{\cF}+B^{\ninv})\cdot H'=-A'\cdot H'<0.\]
Applying \cite[Corollary~2.28]{Spi20HigherDimensionalFoliatedMori} with $D_1=D_2=H'$, we obtain a moving family of invariant rational curves $C$ such that $H'\cdot C=0$. Thus $[C]\in R$.

\item In this step, we derive a contradiction.

Let $C$ be a general member of the moving family constructed in Step~1. Since $\fA$ is klt, Proposition~\ref{control singularities} implies that $(X,B)$ is a klt pair. By \cite[Theorem~8.1]{CS21MMPCorankOneThreefolds}, $(\cF,\nB)$ admits an F-dlt modification $\pi:W\to X$ such that $W$ is $\bQ$-factorial and the pullback foliation $\cG$ is non-dicritical.

Let $B_W$ and $C_W$ be the strict transforms of $B$ and $C$ on $W$, respectively, and let $E=\sum E_i$ be the sum of reduced $\pi$-exceptional divisors. Then
\[(K_{\cG}+\nB_W+\nE)\cdot C_W\leq\pi^*(K_{\cF}+\nB)\cdot C_W=(K_{\cF}+\nB)\cdot C<0.\]

Since $\cG$ is non-dicritical and the curves $C_W$ form a moving family of invariant rational leaves, $\cG$ is induced by a fibration $h:W\to T$, and a general $C_W$ is a general fiber of $h$. Therefore,
\[K_{\cG}\cdot C_W=-2=K_W\cdot C_W.\]

Since $(X,B)$ is klt, we may write
\[K_W+B_W+E=\pi^*(K_X+B)+P,\]
where $P\geq0$ is $\pi$-exceptional. Moreover, a general fiber $C_W$ does not intersect $B_W^{\inv}$ or $E^{\inv}$. Hence
\[(K_X+B)\cdot C\leq(K_W+B_W+E)\cdot C_W=(K_{\cG}+B_W^{\ninv}+E^{\ninv})\cdot C_W<0.\]
This contradicts the assumption that $(K_X+B)\cdot R\geq0$. Therefore, $R$ is $(K_X+B)$-negative. By \cite[Theorem~3.7]{KM98BirationalGeometry}, there is a fibration $f:X\to Z$ such that for any irreducible curve $\Sigma$, $f(\Sigma)$ is a point if and only if $[\Sigma]\in R$.
\end{enumerate}
\end{proof}
\end{thm}

\subsection{Adjoint MMP}
We are now ready to prove Theorem~\ref{adjoint MMP}.

\begin{proof}
We construct the $\KFA$-MMP inductively. Set
\[\fA_0:=\fA=(X_0,\cF_0,B_0,t).\]
Suppose that a klt adjoint foliated surface
\[\fA_i:=(X_i,\cF_i,B_i,t)\]
has been constructed.

If $K_{\fA_i}$ is nef, then we stop. Otherwise, let $R_i$ be a
$K_{\fA_i}$-negative extremal ray. Since
\[K_{\fA_i}=t(K_{\cF_i}+\nB_i)+(1-t)(K_{X_i}+B_i),\]
the ray $R_i$ is either $(K_{X_i}+B_i)$-negative or
$(K_{\cF_i}+\nB_i)$-negative.

If $(K_{X_i}+B_i)\cdot R_i<0$, then $R_i$ admits an extremal
contraction by the classical contraction theorem
\cite[Theorem~3.7]{KM98BirationalGeometry}.

Assume instead that
\[(K_{\cF_i}+\nB_i)\cdot R_i<0.\]
By Theorem~\ref{conethm}, the $K_{\fA_i}$-negative extremal rays
are locally finite in the $K_{\fA_i}$-negative part of
$\overline{\textnormal{NE}}(X_i)$. It follows that $R_i$ is exposed in the sense of
\cite[Definition~6.1]{Spi20HigherDimensionalFoliatedMori}. Thus, there
exists a nef $\bR$-divisor $H_i$ such that
\[\overline{\textnormal{NE}}(X_i)\cap H_i^\perp=R_i.\]
Since $K_{\fA_i}\cdot R_i<0$, for a sufficiently large positive real number $c$, the divisor
\[A_i:=cH_i-K_{\fA_i}\]
is ample. Replacing $H_i$ by $cH_i$, we may therefore write
\[H_i=K_{\fA_i}+A_i,\]
where $A_i$ is ample and $H_i$ vanishes exactly on $R_i$.
If $H_i$ is big, then
Theorem~\ref{divisorial contraction: foliation negative} gives a
divisorial contraction of $R_i$. If $H_i$ is not big, then
Theorem~\ref{Mori fiber space:foliation negative} gives a fiber type contraction of $R_i$.

Thus, in all cases, there exists an extremal contraction
\[\mu_{i+1}:X_i\to T_i\]
associated with $R_i$. If $\mu_{i+1}$ is birational, we set
\[X_{i+1}:=T_i,\qquad \fA_{i+1}:=(\mu_{i+1})_*\fA_i.\]
By the negativity lemma, $\fA_{i+1}$ is again klt, so we may continue the process. If $\mu_{i+1}$ is of fiber type, we stop and set
\[Y:=X_i,\qquad Z:=T_i.\]

Each birational morphism decreases the Picard number by one. Therefore, after finitely many steps, the process terminates with one of the following two outcomes:
\begin{enumerate}[label=(\roman*)]
\item a klt adjoint foliated surface $\fB$ such that $\KFB$ is nef;
\item a fiber type contraction $Y\to Z$ such that $\rho(Y/Z)=1$ and $\KFB$ is negative over $Z$, where $\fB$ is the induced adjoint foliated structure on $Y$.
\end{enumerate}
Let $f:X\to Y$ be the composition of the birational contractions. This proves that $\fB$ is klt and, in the fiber type case, that $-\KFB$ is ample over $Z$.

It remains to distinguish the two possible outputs. If $\KFA$ is pseudo-effective, then its pushforward on every birational model appearing in the MMP is pseudo-effective. Hence the MMP cannot end with a fiber type contraction, and therefore $\KFB$ is nef.

If $\KFA$ is not pseudo-effective, then, by the negativity lemma, the adjoint canonical divisor remains non-pseudo-effective after every birational step. Since a nef divisor is pseudo-effective, the MMP cannot terminate with a nef model. It must therefore end with a fiber type contraction $Y\to Z$ such that $\rho(Y/Z)=1$ and
$-\KFB$ is ample over $Z$.
\end{proof}

As in the classical case (cf. \cite[\S 3.6-7]{KM98BirationalGeometry}), the proof of Theorem \ref{adjoint MMP} can be adapted to yield the following relative version: 

\begin{cor}\label{relative adjoint MMP}
     Let $\fA:=(X,\cF,B,t)$ be a klt adjoint foliated surface. Let $\pi:X\to U$ be a projective morphism. Then we may run a $\KFA$-MMP $f:X\to Y$ over $U$. Moreover, setting $\fB:=(Y,f_*\cF,f_*B,t)$, the following properties hold:
    \begin{enumerate}
        \item $\fB$ is klt.
        \item If $\KFA$ is pseudo-effective over $U$, then $\KFB$ is nef over $U$.
        \item If $\KFA$ is not pseudo-effective over $U$, then $Y$ admits a fibration $Y\to Z$ over $U$ such that $\rho(Y/Z)=1$ and $-\KFB$ is ample over $Z$.
    \end{enumerate}
\end{cor}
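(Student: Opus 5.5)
We follow the proof of Theorem~\ref{adjoint MMP}, replacing every absolute statement by its version over $U$. Since $\fA$ is klt, Proposition~\ref{control singularities} shows that $(X,B)$ is klt. Hence $X$ is $\bQ$-factorial, and both $K_X+B$ and $\KCF+\nB$ are $\bR$-Cartier. The first step is a relative cone theorem for $\overline{\textnormal{NE}}(X/U)$. The argument in Theorem~\ref{conethm} goes through word for word. Minkowski's theorem applies to a compact slice of the relative cone. A $\KFA$-negative extremal ray of $\overline{\textnormal{NE}}(X/U)$ is either $(K_X+B)$-negative or $(\KCF+\nB)$-negative. Length bounds and local finiteness then come from the relative cone theorems for klt pairs (\cite{Fuj11FundamentalTheoremsLogMMP}) and for foliated surfaces (\cite{Spi20HigherDimensionalFoliatedMori}). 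Those results are stated for curves in arbitrary closed subcones, so they restrict to the cone of $\pi$-contracted curves.

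Given a $\KFA$-negative extremal ray $R$ over $U$, local finiteness makes $R$ exposed. So there is a divisor $H=\KFA+A$, with $A$ $\pi$-ample, that is $\pi$-nef and vanishes on $\overline{\textnormal{NE}}(X/U)$ exactly along $R$. After adding the pullback of a sufficiently ample divisor on $U$, which changes nothing on $\pi$-contracted curves, we may take $A$ and $H$ to be globally ample and nef. We split into cases.

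\emph{Case 1: $R$ is $(K_X+B)$-negative.} We contract $R$ by the relative contraction theorem \cite[Theorem~3.7]{KM98BirationalGeometry}.

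\emph{Case 2: $H$ is big.} If $\pi$ is generically finite, every contracted curve is $\pi$-exceptional. If $\dim U=1$, the ray is spanned by a curve in a fibre with $C^2<0$. In either situation we rerun the proof of Theorem~\ref{divisorial contraction: foliation negative}: take the F-dlt modification, contract $C_W$, and run the auxiliary $M_V$- and $N_T$-MMPs, now all over $U$. Their rays are $\pi^*H$-trivial and vertical over $U$, so each step is a contraction over $U$. The Hodge index argument on $Y$ is numerical and unchanged, so the resulting morphism descends from $X$ to a morphism over $U$.

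\emph{Case 3: $H$ is not big.} This can only happen when $U$ is a point or $\dim U\le 1$ with $R$ spanned by fibres. If $U$ is a point, we quote Theorem~\ref{Mori fiber space:foliation negative}. If $X\to U$ is a fibration onto a curve and $R$ contains the class of a general fibre, then $\rho(X/U)=1$ and there is nothing to contract; the structure map itself is the required fiber-type contraction. To see that the ray is $K_X$-negative in this last situation, run the bend-and-break argument of Step~2 of Theorem~\ref{Mori fiber space:foliation negative}.

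Termination holds because the relative Picard number drops at each birational step. Conclusions (1)--(3) then follow exactly as in the absolute case, using the negativity lemma and the fact that pseudo-effectivity over $U$ is preserved under birational contractions.

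I expect the main obstacle to be the descent in Case~2 in the relative setting. The auxiliary MMPs run on $V$ and $T$ must stay over $U$, and their rays must be $\pi^*H$-trivial. Here I would use the fact that the length-of-ray bounds hold for $\pi$-vertical curves, so with $l$ sufficiently large every negative ray is $H$-trivial. Being $H$-trivial, such a ray is contained in a fibre over $U$, because $H$ differs from a relatively ample class by the pullback of an ample divisor.
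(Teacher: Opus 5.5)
Your proposal is correct and follows the route the paper intends: the paper gives no separate argument, only stating that the proof of Theorem~\ref{adjoint MMP} adapts to the relative setting as in \cite[\S 3.6--7]{KM98BirationalGeometry}, and this is what you carry out. Two of your steps can be simplified. Once you have made $H=\KFA+A$ globally nef and vanishing exactly on $R\subseteq\overline{\textnormal{NE}}(X/U)$, Theorem~\ref{divisorial contraction: foliation negative} applies verbatim in Case~2, with no need to rerun the auxiliary MMPs, because contracting a $\pi$-vertical curve automatically factors over $U$; and in Case~3 with $\dim U=1$ no $K_X$-negativity argument is needed, since the Stein factorization of $\pi$ already gives the fiber-type contraction.
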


\subsection{Good minimal models}
We next prove the existence and uniqueness of good minimal models for klt adjoint foliated surfaces whose boundaries contain ample divisors.

\begin{lem}\label{nef but not ample}
Assume that 
\begin{enumerate}
    \item $\fA:=(X,\cF,B,t)$ is a klt adjoint foliated surface,
    \item $\nB\geq A$, where $A\geq 0$ is an ample $\bR$-divisor, and
    \item $\KFA$ is nef but not ample.
\end{enumerate}
Then there exists a $\KFA$-trivial extremal ray $R$. 

\begin{proof}
Since $\fA$ is klt, it follows from Proposition \ref{control singularities} that $(X,B)$ is a klt pair. Set \[\fB:=(X,\cF,B-\frac{1}{2}A,t).\] By assumption, $\KFB$ is not nef, otherwise $\KFA=\KFB+\frac{1}{2}A$ is ample. 
By Theorem~\ref{conethm}, there are only finitely many $\KFB$-negative extremal rays, say $R_1,\dots,R_m$.

Assume that there is no $\KFA$-trivial extremal ray. Then $\KFA\cdot R_i>0$ for $1\leq i\leq m$. Thus, $(\KFA-\epsilon A)\cdot R_i>0$ for a sufficiently small positive real number $\epsilon$. It then follows that $\KFA-\epsilon A$ is nef. Hence $\KFA$ is ample, which is a contradiction. Therefore, there exists a $\KFA$-trivial extremal ray.
\end{proof}
\end{lem}

\begin{thm}\label{good minimal models}
Let $\fA:=(X,\cF,B,t)$ be a klt adjoint foliated surface such that $\KFA$ is pseudo-effective. Assume that $\nB\geq A$ for some ample $\bR$-divisor $A\geq 0$ on $X$. Let $f:X\to Y$ be the output of a $\KFA$-MMP.  Then $Y$ is the unique good minimal model of $\fA$.

\begin{proof}
Let $\fB:=(Y,\cG,B_Y,t)$ be the pushforward of $\fA$, and let $A_Y:=f_*A$. Since $f$ is the output of a $\KFA$-MMP, $\fB$ is klt and $\KFB$ is nef. Moreover, $B_Y^{\ninv}\geq A_Y$, where $A_Y$ is ample. The uniqueness of the good minimal model follows from the uniqueness of the Zariski decomposition. Thus, it remains to prove that $\KFB$ is semi-ample.

Set $Y_0:=Y$, $\fB_0:=\fB$, and $A_0:=A_Y$. We inductively construct a sequence of contractions
\[Y=Y_0\xrightarrow{\mu_1}Y_1\xrightarrow{\mu_2}\cdots\xrightarrow{\mu_n}Y_n.\]
Suppose that $Y_i$, $\fB_i:=(Y_i,\cG_i,B_i,t)$, and $A_i$ have been constructed such that $\fB_i$ is klt, $K_{\fB_i}$ is nef, $B_i^{\ninv}\geq A_i$, and $A_i$ is ample.

If $K_{\fB_i}$ is ample, then we stop. Otherwise, by Lemma~\ref{nef but not ample}, there exists a $K_{\fB_i}$-trivial extremal ray $R_i$. Set
\[\fB_i':=(Y_i,\cG_i,B_i-\frac12A_i,t).\]
Then
\[K_{\fB_i'}\cdot R_i=-\frac12A_i\cdot R_i<0.\]
As in the proof of Theorem~\ref{adjoint MMP}, there exists a contraction
\[\mu_{i+1}:Y_i\to Y_{i+1}\]
which contracts exactly the curves whose numerical classes belong to $R_i$.

If $\dim Y_{i+1}<2$, then $K_{\fB_i}$ is semi-ample and we stop. Otherwise, $\mu_{i+1}$ is birational. Let
\[\fB_{i+1}:=(\mu_{i+1})_*\fB_i\quad\text{and}\quad A_{i+1}:=(\mu_{i+1})_*A_i.\]
Since $R_i$ is $K_{\fB_i}$-trivial, we have
\[K_{\fB_i}=\mu_{i+1}^*K_{\fB_{i+1}}.\]
In particular, $\fB_{i+1}$ is klt. Moreover, $B_{i+1}^{\ninv}\geq A_{i+1}$ and $A_{i+1}$ is ample. Thus, the construction can be continued.

Each birational morphism decreases the Picard number by one. Hence, after finitely many steps, the process terminates either with a klt adjoint foliated surface $\fB_n$ such that $K_{\fB_n}$ is ample, or with a $K_{\fB_n}$-trivial contraction $Y_n\to Y_{n+1}$ such that $\dim Y_{n+1}<2$. In either case, $K_{\fB_n}$ is semi-ample. Since
\[\KFB=\mu_1^*\cdots\mu_n^*K_{\fB_n},\]
it follows that $\KFB$ is semi-ample. Therefore, $Y$ is the unique good minimal model of $\fA$.
\end{proof}
\end{thm}

We next prove the existence and uniqueness of the log canonical model for klt adjoint foliated surfaces of general type.

\begin{cor}\label{ample model}
    Let $\fA:=(X,\cF,B,t)$ be a klt adjoint foliated surface such that $\KFA$ is big. Let $f:X\to Y$ be the output of a $\KFA$-MMP. Define $\fB:=(Y,\cG,B_Y,t)$ to be the pushforward of $\fA$. Then there exists a birational morphism $g:Y\to Z$ such that if we take $\fC$ to be the pushforward of $\fB$, then we have 
    \begin{enumerate}
        \item $\fC$ is klt,
        \item $\KFB=g^*\KFC$, and
        \item $\KFC$ is ample.
    \end{enumerate}
Moreover, $Z$ is uniquely determined.

\begin{proof}
Since $\KFA$ is big, $\KFB$ is nef and big. Hence, we can choose an effective ample $\bR$-divisor $A$ and an effective $\bR$-divisor $E$ such that $A+E\sim_\bR \KFB$, and every component of $\Supp A$ is $\cG$-non-invariant. Set $D:=A+E$.

If $t<1$, set
\[\Gamma:=D^{\ninv}+\frac{1}{1-t}D^{\inv}.\]
If $t=1$, then Step~1 of Proposition~\ref{control singularities} implies that no prime divisor on $Y$ is $\cG$-invariant, and hence we set
\[\Gamma:=D=D^{\ninv}.\]
In either case, we have
\[\Gamma^{\ninv}+(1-t)\Gamma^{\inv}=D.\]

We may take a sufficiently small positive real number $\epsilon$ such that
\[\fB':=(Y,\cG,B_Y+\epsilon\Gamma,t)\]
is klt. Moreover,
\[(B_Y+\epsilon\Gamma)^{\ninv}\geq\epsilon A,\]
and
\[K_{\fB'}=\KFB+\epsilon D\sim_\bR(1+\epsilon)\KFB.\]
In particular, $K_{\fB'}$ is nef. Therefore, by Theorem~\ref{good minimal models}, $Y$ is the good minimal model of $\fB'$, and hence $K_{\fB'}$ is semi-ample. It follows that $\KFB$ is semi-ample, defining a birational morphism $g:Y\to Z$. Let $\fC$ be the pushforward of $\fB$. Then $\KFB=g^*\KFC$, and $\KFC$ is ample. Since $\fB$ is klt and $g$ is $\KFB$-trivial, $\fC$ is klt. The uniqueness of $Z$ follows from the uniqueness of the ample model.
\end{proof}
\end{cor}

\section{Finiteness of models}\label{Section:Finiteness}

In this section, we prove the finiteness of good minimal models and ample models when the adjoint foliated structure varies in a polytope. This is an adjoint foliated analogue of \cite[Corollary~1.1.5]{BCHM10}.

We first prove an elementary topological lemma which will be used to reduce the proof to the boundary of the polytope.

\begin{lem}\label{topology}
Assume that 
\begin{enumerate}
    \item $P\subseteq \bR^n$ is a polytope of dimension $n$,
    \item $Q\subseteq \bR^n$ is a bounded closed set,
    \item $\Gamma:P\to Q$ is a homeomorphism, and
    \item $p_1\in P$, $p_2\in \Int P$ such that $p_2\neq p_1$.
\end{enumerate}
Then there exist a point $p_3\in \partial P$ and a real number $\epsilon\in (0,1)$ such that
\[\Gamma(p_2)=\epsilon\Gamma(p_1)+(1-\epsilon)\Gamma(p_3).\]
Furthermore, for any sufficiently small $\epsilon'>0$, there exists a point $p_4\in \Int P$ such that 
\[\Gamma(p_2)=\epsilon'\Gamma(p_1)+(1-\epsilon')\Gamma(p_4).\]
\end{lem}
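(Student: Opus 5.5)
The plan is to use invariance of domain together with a ray-shooting argument. Convexity of $Q$ is not assumed, so all the geometry has to come from the topology of $\Gamma$.

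First, I would show that $\Gamma(\Int P)$ is an open subset of $\bR^n$. The set $\Int P$ is a nonempty open subset of $\bR^n$, since $P$ has dimension $n$. The map $\Gamma|_{\Int P}$ is continuous and injective into $\bR^n$, so Brouwer's invariance of domain gives that $\Gamma(\Int P)$ is open in $\bR^n$. Since $\Gamma$ is a bijection onto $Q$, we also get
\[Q\setminus\Gamma(\Int P)=\Gamma(\partial P).\]
Finally, $\Gamma(p_1)\neq\Gamma(p_2)$ because $\Gamma$ is injective and $p_1\neq p_2$.

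Next, I would consider the ray starting at $\Gamma(p_1)$ and passing through $\Gamma(p_2)$:
\[L(s):=\Gamma(p_1)+s\bigl(\Gamma(p_2)-\Gamma(p_1)\bigr),\qquad s\geq 1.\]
Define
\[s^*:=\sup\{s\geq1\mid L([1,s])\subseteq\Gamma(\Int P)\}.\]
The point $L(1)=\Gamma(p_2)$ lies in the open set $\Gamma(\Int P)$, so $s^*>1$. Since $Q$ is bounded and $\Gamma(p_2)-\Gamma(p_1)\neq0$, the ray eventually leaves $Q$, so $s^*<\infty$.

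The point $L(s^*)$ lies in the closure of $\Gamma(\Int P)$. This closure is contained in $Q$, because $Q$ is closed. On the other hand, $L(s^*)\notin\Gamma(\Int P)$: otherwise openness would let the segment $L([1,s])$ stay inside $\Gamma(\Int P)$ slightly beyond $s^*$, contradicting maximality. Hence $L(s^*)\in\Gamma(\partial P)$, say $L(s^*)=\Gamma(p_3)$ with $p_3\in\partial P$.

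Solving $\Gamma(p_3)=L(s^*)$ for $\Gamma(p_2)$ gives
\[\Gamma(p_2)=\epsilon\,\Gamma(p_1)+(1-\epsilon)\,\Gamma(p_3),\qquad \epsilon:=1-\frac{1}{s^*}\in(0,1).\]
This proves the first claim.

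For the second claim, take any $s\in(1,s^*)$. Then $L(s)\in\Gamma(\Int P)$, so $L(s)=\Gamma(p_4)$ for some $p_4\in\Int P$. The same algebra gives
\[\Gamma(p_2)=\epsilon'\,\Gamma(p_1)+(1-\epsilon')\,\Gamma(p_4),\qquad \epsilon'=1-\frac1s.\]
As $s$ ranges over $(1,s^*)$, the number $\epsilon'$ ranges over all of $(0,\epsilon)$, which is exactly "every sufficiently small $\epsilon'>0$".

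The only nontrivial input is the openness of $\Gamma(\Int P)$ via invariance of domain. Everything else is the sup argument, which needs closedness and boundedness of $Q$ to locate the exit point $\Gamma(p_3)$ on $\Gamma(\partial P)$.
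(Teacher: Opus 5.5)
Your proof is correct and takes the same route as the paper. The paper shoots the same ray from $\Gamma(p_2)$ away from $\Gamma(p_1)$, uses boundedness of $Q$ to find a point of $\partial Q$ on it, and pulls that point back to $\partial P$ via ``homeomorphisms preserve boundaries'', which is implicitly invariance of domain. Your supremum argument over $\Gamma(\Int P)$ makes this step explicit and needs only the openness of $\Gamma(\Int P)$, not the full identification $\Gamma(\Int P)=\Int Q$ that the paper's furthermore step implicitly uses.
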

\begin{proof}
Consider the ray \[R:=\{\Gamma(p_2)+t(\Gamma(p_2)-\Gamma(p_1))\mid t\geq 0\}.\]
Since $p_2\in \Int P$ and $\Gamma$ is a homeomorphism, it follows that $\Gamma(p_2)\in \Int Q$. As $Q$ is bounded, the ray $R$ cannot be contained entirely in $Q$. Hence $R\cap \partial Q\neq \emptyset$. Choose a point $q_3\in R\cap \partial Q$, and take $p_3=\Gamma^{-1}(q_3)$. Then there exists a real number $\epsilon\in (0,1)$ such that \[\Gamma(p_2)=\epsilon\Gamma(p_1)+(1-\epsilon)\Gamma(p_3).\]
Since homeomorphisms preserve boundaries, it follows that $p_3\in \partial P$.

For furthermore part, since $\Gamma(p_2)\in \Int Q$ and $\epsilon'$ is sufficiently small, there exists a point $q_4\in R\cap \Int Q$ such that 
\[\Gamma(p_2)=\epsilon'\Gamma(p_1)+(1-\epsilon')q_4.\]
Therefore, we can take $p_4=\Gamma^{-1}(q_4)\in \Int P$ and this completes the proof.
\end{proof}

The following elementary observation allows us to deduce global ampleness from relative ampleness.

\begin{lem}\label{from relative ample to ample}
    Let $f:X\to Z$ be a contraction between projective normal varieties. Let $H$ be an ample $\bR$-divisor on $Z$, and $N$ be a nef $\bR$-divisor on $X$ which is ample over $Z$. Then $N+af^*H$ is ample for any positive real number $a$.
\end{lem}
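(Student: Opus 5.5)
The plan is to verify ampleness of $N+af^*H$ with Kleiman's criterion. For an $\bR$-divisor on a projective variety this means showing that $N+af^*H$ is strictly positive on $\overline{\textnormal{NE}}(X)\setminus\{0\}$. Nefness is immediate, since $N$ is nef and $f^*H$ is nef as the pullback of an ample divisor. The whole task is therefore to rule out a nonzero class $\alpha\in\overline{\textnormal{NE}}(X)$ with $(N+af^*H)\cdot\alpha=0$.

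Suppose $\alpha\in\overline{\textnormal{NE}}(X)\setminus\{0\}$ satisfies $(N+af^*H)\cdot\alpha=0$. Both $N\cdot\alpha$ and $f^*H\cdot\alpha$ are nonnegative, so $N\cdot\alpha=0$ and $H\cdot f_*\alpha=0$. Since $H$ is ample on $Z$, Kleiman's criterion on $Z$ forces $f_*\alpha=0$. Hence $\alpha$ lies in $\overline{\textnormal{NE}}(X)\cap\ker f_*$, which is exactly the relative cone $\overline{\textnormal{NE}}(X/Z)$. This identification holds because $\overline{\textnormal{NE}}(X/Z)$ is by definition the closure of the cone spanned by curves contracted by $f$. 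Equivalently, it is $\overline{\textnormal{NE}}(X)\cap (f^*H)^{\perp}$, since $f^*H$ is nef and its zero locus on the cone is spanned by $f$-contracted classes.

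The delicate step is exactly this identification of $\overline{\textnormal{NE}}(X)\cap(f^*H)^\perp$ with $\overline{\textnormal{NE}}(X/Z)$. To avoid it, I would first try a more direct route. Since $N$ is $f$-ample and $Z$ is projective, the standard fact is that $N+mf^*H$ is ample for some $m\gg0$ (see \cite[Proposition 1.45]{KM98BirationalGeometry}, or approximate $N$ by $\bQ$-divisors, which preserves relative ampleness because this is an open condition). For $a\geq m$, the divisor $N+af^*H=(N+mf^*H)+(a-m)f^*H$ is ample plus nef, hence ample.

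For $0<a<m$, write
\[N+af^*H=\tfrac{a}{m}(N+mf^*H)+\bigl(1-\tfrac{a}{m}\bigr)N.\]
The first summand is ample with a positive coefficient, and the second is nef with a nonnegative coefficient. Ample plus nef is ample, which finishes the proof with no case analysis on cones. The only ingredient beyond formal manipulation is the existence of $m$, which is the standard relative-to-absolute ampleness statement for projective morphisms.
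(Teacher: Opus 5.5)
Your proposal is correct and is essentially the paper's argument: the paper also picks a multiple $b$ (chosen directly with $b>a$, which avoids your $a\geq m$ case) such that $N+bf^*H$ is ample, and writes $N+af^*H=(1-\tfrac{a}{b})N+\tfrac{a}{b}(N+bf^*H)$ as ample plus nef. You were also right to drop the Kleiman-cone route, since the final argument never needs the identification of $\overline{\textnormal{NE}}(X)\cap(f^*H)^{\perp}$ with $\overline{\textnormal{NE}}(X/Z)$.
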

\begin{proof}
    Since $H$ is ample and $N$ is ample over $Z$, there exists a positive real number $b>a$ such that $N+bf^*H$ is ample. Since $N$ is nef, it follows that
    \[N+af^*H=(1-\frac{a}{b})N+\frac{a}{b}(N+bf^*H)\] is ample. 
\end{proof}

We then introduce the parameter spaces in which the adjoint foliated structures will vary.

\begin{definition}
Assume that 
\begin{itemize}
    \item $X$ is a projective normal surface,
    \item $\cF$ is a rank one foliation on $X$,
    \item $V$ is a finite dimensional affine subspace of $\WDiv_\bR(X)\times \bR$, where $\WDiv_\bR(X)$ is the real vector space of Weil divisors on $X$, and
    \item $A\geq 0$ is an $\bR$-divisor on $X$.
\end{itemize}
  Then we define
\begin{enumerate}
    \item $V_A=\{(A+\Delta,t)|(\Delta,t)\in V\}$,
    \item $\cL_A(V)\subseteq V_A$ as the subset such that $\Delta\geq 0$, $t\in [0,1]$, and $\fA:=(X,\cF,A+\Delta,t)$ is log canonical, and
    \item $\cE_A(V)\subseteq \cL_A(V)$ as the subset such that $\KFA$ is pseudo-effective.
\end{enumerate}
\end{definition}

We now prove the following finiteness theorem.

\begin{thm}\label{finiteness of models}
Let $0<\delta<1$ be a real number. Assume that
\begin{enumerate}
\item $X$ is a projective normal surface,
\item $\cF$ is a rank one foliation on $X$,
\item $V$ is a finite dimensional affine subspace of $\WDiv_\bR(X)\times\bR$,
\item $A\geq0$ is a general ample $\cF$-non-invariant $\bR$-divisor, and
\item $\cC$ is a polytope in the interior of $\cL_A(V)$ such that if $(B,t)\in\cC$, then $\delta\leq t\leq1-\delta$ and $\fA(B,t):=(X,\cF,B,t)$ is klt.
\end{enumerate}
Then there exist finitely many diagrams
\[X\xrightarrow{f_i}Y_i\xrightarrow{g_i}Z_i,\qquad 1\leq i\leq k,\]
where $f_i$ is a birational morphism and $g_i$ is a contraction, satisfying the following:

If $(B,t)\in\cC\cap\cE_A(V)$, then there exists an index $1\leq i\leq k$ such that $f_i:X\to Y_i$ is the good minimal model of $\fA(B,t)$ and $g_i\circ f_i:X\to Z_i$ is its ample model.

\begin{proof}
The proof adapts the argument of \cite[Theorem~5.1]{MZ23MMPforlocallystablefamilies} while keeping track of the good minimal models. Possibly replacing $V_A$ by the affine span of $\cC$, we may assume that $\cC$ spans $V_A$. We proceed by induction on $\dim\cC$. If $\dim\cC=0$, the result follows from Theorem~\ref{good minimal models}. Thus, we may assume that $\dim\cC>0$.

\begin{enumerate}[label=\textsl{Step} \arabic{enumi}., wide=13pt, itemsep=13pt]
\item In this step, we reduce the proof to the case where $K_{\fA(B_0,t_0)}$ is semiample for some $(B_0,t_0)\in\cC\cap\cE_A(V)$.

Since $\cC\cap\cE_A(V)$ is compact, it suffices to prove the result locally around every point $(B_0,t_0)\in\cC\cap\cE_A(V)$. By Theorem~\ref{good minimal models}, $\fA(B_0,t_0)$ has a good minimal model $f:X\to Y$. For any $(B,t)$, denote by $B_Y:=f_*B$ and \[\fB(B_Y,t):=(Y,f_*\cF,B_Y,t)\] the induced adjoint foliated surface on $Y$.

Let $\cP$ be a sufficiently small polytope neighborhood of $(B_0,t_0)$ contained in the interior of $\cL_A(V)$. Since $\fB(B_{0,Y},t_0)$ is klt and $f$ is $K_{\fA(B_0,t_0)}$-negative, after shrinking $\cP$, we may assume that $\fB(B_Y,t)$ is klt and \[a(E,\fB(B_Y,t))>a(E,\fA(B,t))\] for every $(B,t)\in\cP$ and every $f$-exceptional prime divisor $E$. Hence $f$ is $K_{\fA(B,t)}$-negative for every $(B,t)\in\cP$, and $\fA(B,t)$ and $\fB(B_Y,t)$ have the same good minimal model and the same ample model.

Replacing $X$, $\cF$, and $\cP$ by $Y$, $f_*\cF$, and $f_*\cP$, respectively, we may assume that $K_{\fA(B_0,t_0)}$ is semiample.

\item In this step, we reduce the proof to the boundary of a polytope while preserving both the good minimal model and the ample model.

Let $\cC_0'$ and $\cC_0$ be polytope neighborhoods of $(B_0,t_0)$ such that \[(B_0,t_0)\in\Int\cC_0'\subseteq\cC_0'\subseteq\Int\cC_0\subseteq\cC_0\subseteq\Int\cL_A(V),\] and such that $\fA(B,t)$ is klt and $\delta\leq t\leq1-\delta$ for every $(B,t)\in\cC_0$. Fix a point $(B_1,t_1)\in\Int\cC_0'\cap\cE_A(V)$ distinct from $(B_0,t_0)$.

For any $(B,t)\in\cC_0$, set \[\Gamma(B,t):=\frac{t}{1-t}\KCF+\frac{1}{1-t}\nB+\iB.\] Then \[K_{\fA(B,t)}=(1-t)(K_X+\Gamma(B,t)).\] Thus, $K_{\fA(B,t)}$ and $K_X+\Gamma(B,t)$ have the same good minimal model and the same ample model.

Let $\mu:X\to Z$ be the contraction induced by the semiample divisor $K_{\fA(B_0,t_0)}$. Then \[K_X+\Gamma(B_0,t_0)\sim_\bR\mu^*H_Z,\] where $H_Z$ is an ample $\bR$-divisor on $Z$. By the proof of \cite[Proposition~3.2(5)]{Bir11OnExistenceLogMinimalModelsII} and Theorem~\ref{conethm}, after shrinking $\cC_0$ and $\cC_0'$, we may assume that every $K_{\fA(B,t)}$-MMP, for $(B,t)\in\cC_0$, is $K_{\fA(B_0,t_0)}$-trivial.

Regarding the coefficient of $\KCF$ as an additional coordinate, the map $\Gamma$ is identified with \[(B,t)\longmapsto\left(\frac{1}{1-t}\nB+\iB,\frac{t}{1-t}\right),\] and restricts to a homeomorphism on each of $\cC_0$ and $\cC_0'$. Applying Lemma~\ref{topology} to $\cC_0$ and $\cC_0'$ along the same ray, we obtain points $(B_2,t_2)\in\partial\cC_0$ and $(B_3,t_3)\in\partial\cC_0'$, and real numbers $0<\epsilon'<\epsilon<1$, such that \[\Gamma(B_1,t_1)=\epsilon\Gamma(B_0,t_0)+(1-\epsilon)\Gamma(B_2,t_2)\] and \[\Gamma(B_1,t_1)=\epsilon'\Gamma(B_0,t_0)+(1-\epsilon')\Gamma(B_3,t_3).\] Hence \[\Gamma(B_3,t_3)=\frac{\epsilon-\epsilon'}{1-\epsilon'}\Gamma(B_0,t_0)+\frac{1-\epsilon}{1-\epsilon'}\Gamma(B_2,t_2).\]

For $0\leq i\leq3$, set \[D_i:=K_X+\Gamma(B_i,t_i)=\frac{1}{1-t_i}K_{\fA(B_i,t_i)}.\] Then \[D_1=\epsilon D_0+(1-\epsilon)D_2\] and \[D_3=\frac{\epsilon-\epsilon'}{1-\epsilon'}D_0+\frac{1-\epsilon}{1-\epsilon'}D_2.\] Since $D_0\sim_{\bR,Z}0$, we have \[D_1\sim_{\bR,Z}(1-\epsilon)D_2.\] As $D_1$ is pseudo-effective, $D_2$ is pseudo-effective over $Z$.

Run a $D_2$-MMP \[g:X\to Y.\] Every step of this MMP is $D_0$-trivial and hence is over $Z$. Since $D_2$ is pseudo-effective over $Z$, this MMP cannot terminate with a Mori fiber space. Therefore, it terminates with $D_{2,Y}:=g_*D_2$ nef. In particular, $D_2$ is pseudo-effective. By Theorem~\ref{good minimal models}, $g:X\to Y$ is the good minimal model of $\fA(B_2,t_2)$ and $D_{2,Y}$ is semiample.

Denote the pushforward of $D_i$ by $D_{i,Y}$. Since every step of $g$ is $D_0$-trivial, it is both $D_1$-negative and $D_3$-negative. Let $h:Y\to W$ be the contraction over $Z$ induced by $D_{2,Y}$, and let $\nu:W\to Z$ be the induced morphism. Then \[D_{2,Y}\sim_\bR h^*H_W,\] where $H_W$ is ample over $Z$ and globally nef, and \[D_{0,Y}\sim_\bR h^*\nu^*H_Z.\] Consequently, it follows from Lemma~\ref{from relative ample to ample} that \[D_{1,Y}\sim_\bR h^*((1-\epsilon)H_W+\epsilon\nu^*H_Z)\] is the pullback of an ample $\bR$-divisor on $W$. Similarly, \[D_{3,Y}\sim_\bR h^*\left(\frac{1-\epsilon}{1-\epsilon'}H_W+\frac{\epsilon-\epsilon'}{1-\epsilon'}\nu^*H_Z\right)\] is also the pullback of an ample $\bR$-divisor on $W$.

It follows that $g:X\to Y$ is the good minimal model of both $\fA(B_1,t_1)$ and $\fA(B_3,t_3)$, while $h\circ g:X\to W$ is their common ample model. In particular, $(B_3,t_3)\in\cE_A(V)$.

Thus, every point of $\Int\cC_0'\cap\cE_A(V)$ distinct from $(B_0,t_0)$ has the same good minimal model and the same ample model as a point of $\partial\cC_0'\cap\cE_A(V)$. Together with the diagram associated with $\fA(B_0,t_0)$, the induction hypothesis applied to the finitely many faces of $\partial\cC_0'$ proves the desired local finiteness.
\end{enumerate}
\end{proof}
\end{thm}

The following consequence keeps track of the models and their compatibility along the faces of the polyhedral decomposition.

\begin{cor}\label{finite model decomposition}
In the setting of Theorem~\ref{finiteness of models}, for $(B,t)\in\cC$, set \[\Gamma(B,t):=\frac{t}{1-t}\KCF+\frac{1}{1-t}\nB+\iB.\] Suppose that $\cP:=\Gamma(\cC)$ is a polytope, and set \[\cP_{\mathrm{pe}}:=\Gamma(\cC\cap\cE_A(V)).\] Then there exist a finite polyhedral decomposition \[\cP_{\mathrm{pe}}=\bigcup_{i=1}^p\cP_i\] and finitely many diagrams \[X\xrightarrow{f_i}Y_i\xrightarrow{g_i}Z_i,\qquad 1\leq i\leq p,\] such that, for every $\Phi=\Gamma(B,t)\in\cP_i^\circ$, the morphism $f_i$ is the good minimal model of $\fA(B,t)$ and $g_i\circ f_i$ is its ample model.

Moreover, the decomposition may be chosen to be compatible along faces. More precisely, if $\cP_j$ is a face of $\cP_i$, then there exists a contraction $\alpha_{ij}:Z_i\to Z_j$ such that \[\alpha_{ij}\circ g_i\circ f_i=g_j\circ f_j.\] 

For every $\Phi\in\cP_j^\circ$, the divisor induced by $K_X+\Phi$ on $Z_i$ is the pullback under $\alpha_{ij}$ of an ample $\bR$-divisor on $Z_j$. In particular, it is semiample, and its semiample contraction is $\alpha_{ij}$.
\end{cor}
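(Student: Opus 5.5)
The plan is to start from the finitely many diagrams $X\xrightarrow{f_i}Y_i\xrightarrow{g_i}Z_i$ given by Theorem~\ref{finiteness of models} and sort the points of $\cP_{\mathrm{pe}}$ by which of these they realize. For a diagram $\mathcal D=(f,g)$, let $\cQ_{\mathcal D}$ be the set of $\Phi=\Gamma(B,t)\in\cP_{\mathrm{pe}}$ for which $f$ is the good minimal model of $\fA(B,t)$ and $g\circ f$ is its ample model. Since $K_{\fA(B,t)}=(1-t)(K_X+\Gamma(B,t))$, this condition depends only on $\Phi$, and it is cut out by linear conditions. Concretely, there are three of them:
\begin{itemize}
\item[(a)] $a(E,K_X+\Phi)<a(E,f_*(K_X+\Phi))$ for each $f$-exceptional prime divisor $E$; this is a strict linear inequality in $\Phi$.
\item[(b)] $f_*(K_X+\Phi)$ is $g$-trivial; this is a linear equality, tested on the finitely many $g$-exceptional curve classes, using $\rho(Y/Z)<\infty$.
\item[(c)] The descended divisor on $Z$ is ample. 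This is an open condition on the finite-dimensional subspace of $N^1(Z)$ spanned by the images, and within the relevant cone it is cut out by finitely many linear inequalities: $\overline{\mathrm{NE}}(Z)$ is relevant only through the finitely many extremal rays negative for some nearby adjoint divisor, by the cone theorem (Theorem~\ref{conethm}) applied on $Y$ and pushed to $Z$.
\end{itemize}
Hence each $\cQ_{\mathcal D}$ is a relatively open convex polyhedral set (after intersecting with $\cP$), and by the uniqueness of ample models (Corollary~\ref{ample model} and the uniqueness in Theorem~\ref{good minimal models}) the sets $\cQ_{\mathcal D}$ are pairwise disjoint. Their union is $\cP_{\mathrm{pe}}$.

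I would then refine. I would take the common refinement of all the hyperplanes that occur, namely the walls in (a)--(c) for all $k$ diagrams. This gives a finite polyhedral complex whose closed cells $\cP_i$ have relative interiors each contained in a single $\cQ_{\mathcal D}$. I assign to $\cP_i$ that diagram, possibly repeating indices, which gives the first assertion. I expect the main obstacle to be justifying that ampleness on $Z$ is genuinely polyhedral. If that is awkward, I would argue via Lemma~\ref{topology}-style convexity instead: an open convex set whose closure lies in $\cP_{\mathrm{pe}}$ and which is a union of finitely many convex pieces already gives a polyhedral decomposition once the pieces are known to be convex. Convexity of $\cQ_{\mathcal D}$ follows because convex combinations of divisors satisfying (a)--(c) satisfy them too.

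For compatibility, suppose $\cP_j$ is a face of $\cP_i$. Pick $\Phi_j\in\cP_j^\circ$ and $\Phi_i\in\cP_i^\circ$. For $\Phi$ on the open segment between them, the divisor $D_\Phi:=(g_i\circ f_i)_*(K_X+\Phi)$ is ample on $Z_i$. By continuity $D_{\Phi_j}$ is nef on $Z_i$, since it is a limit of ample classes. The adjoint divisor at $\Phi_j$ is pseudo-effective, and $f_i$ remains non-positive for it by the closedness of (a). So $D_{\Phi_j}$ pulled back to $Y_i$ is a nef divisor on a weakly minimal model of $\fA$ at $\Phi_j$. Since minimal models of the same divisor differ only along trivial loci, its ample model $Z_j$ receives a contraction $\alpha_{ij}:Z_i\to Z_j$ given by the semiample contraction of $D_{\Phi_j}$.

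Semiampleness of $D_{\Phi_j}$ would come from Theorem~\ref{good minimal models}, applied on $Y_i$ (where the boundary still contains the pushed-forward ample $A$), after noting via the negativity lemma that $f_i$ is $K_X+\Phi_j$-non-positive. Then $\alpha_{ij}\circ g_i\circ f_i=g_j\circ f_j$ by uniqueness of the ample model. Finally, for every $\Phi\in\cP_j^\circ$, the divisor $D_\Phi=\alpha_{ij}^*H_\Phi$ with $H_\Phi$ ample on $Z_j$. The contraction $\alpha_{ij}$ is independent of $\Phi\in\cP_j^\circ$ because the relevant curves contracted form the face of $\overline{\mathrm{NE}}(Z_i)$ on which the whole linear family restricted to $\cP_j^\circ$ vanishes, and these curves are constant along the relative interior by linearity. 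This is the last assertion.
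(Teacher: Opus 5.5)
Your argument is essentially correct, but it takes a genuinely different route from the paper's.

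\textbf{The paper's route.} The paper never sorts points by the finite list of diagrams. It reads the decomposition off the inductive proof of Theorem~\ref{finiteness of models}: decompose $\partial\cC_0'$ by induction, then take cones with vertex $\Phi_0$. It proves compatibility by induction on $\dim\cP_i+\dim\cP_j$:
\begin{itemize}
\item the vertex case $\cP_j=\{\Phi_0\}$ comes from the relative contraction $Y\to W\to Z$ built in Step~2;
\item the cone case comes from writing $\Phi=\lambda\Phi_0+(1-\lambda)\Psi$ and noting that $\lambda\alpha_{j0}^*H_0+(1-\lambda)H_j(\Psi)$ is ample.
\end{itemize}

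\textbf{Your route.} You describe each model chamber $\cQ_{\mathcal D}$ globally by affine equalities and strict affine inequalities, then refine by the resulting hyperplane arrangement. You obtain $\alpha_{ij}$ for an arbitrary face inclusion as the semiample contraction of a limit of ample classes on $Z_i$. This uses Theorem~\ref{good minimal models} on $Y_i$ (the pushforward of $A$ stays ample on a surface), together with the standard fact that a non-positive birational contraction with semiample pushforward computes the ample model.

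\textbf{Trade-offs.} Your version is global, so it avoids gluing the local cone decompositions produced by the compactness argument, which the paper does not spell out. It also shows that compatibility along faces holds for every subdivision subordinate to the chambers, not just for one particular construction. The price is that you must actually prove the polyhedrality in (c), which the paper gets for free from the cone construction.

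\textbf{What still needs to be written out.}
\begin{itemize}
\item For (c), the input needed is a uniform finiteness statement. On the closure of a chamber, (a) becomes non-strict, so the pushed-forward structures stay klt, and their boundaries still contain the pushforward of $A$, which is ample. Removing $\frac12A$ from the boundary, the length bound of Theorem~\ref{conethm} shows that every extremal ray on which the adjoint divisor is non-positive is spanned by a rational curve of $A$-degree at most $8$. So only finitely many curve classes, hence finitely many hyperplanes, occur.
\item Impose (b) as $\bR$-linear triviality over $Z$, which is still an affine condition. Numerical triviality over a curve does not by itself give a pullback, unless you upgrade it using semiampleness.
\item When $\dim Z_i<2$, read $D_\Phi$ as the class $H_\Phi$ with $f_{i*}(K_X+\Phi)\sim_\bR g_i^*H_\Phi$. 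This class is affine in $\Phi$ on a closed affine subspace, so your limit argument remains legitimate.
\item Drop the convexity fallback. A partition of a convex set into finitely many convex pieces yields polyhedral pieces only when the ambient set is already polyhedral; a disc is a single convex piece. The polyhedrality of $\cP_{\mathrm{pe}}$, i.e.\ of the pseudo-effective threshold, is part of what has to be shown, and the linear description in (a)--(c) is what supplies it.
\end{itemize}
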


\begin{proof}
We keep track of the inductive construction in the proof of Theorem~\ref{finiteness of models}. At each induction step, set $\Phi_0:=\Gamma(B_0,t_0)$, subdivide the boundary into finitely many polytopes on whose relative interiors the good minimal model and the ample model are fixed, and take cones with vertex $\Phi_0$. By Step~2, the relative interior of each such cone has the same good minimal model and the same ample model as the relative interior of its base. This gives the required finite polyhedral decomposition.

It remains to verify the compatibility along faces. We prove this by induction on $\dim\cP_i+\dim\cP_j$. The case $\cP_i=\cP_j$ is immediate, so we may assume that $\cP_j$ is a proper face of $\cP_i$.

By the construction in Step~2, we may write $\cP_i=\operatorname{Conv}(\Phi_0,\cQ_i)$, where $\cQ_i$ is a polytope in the boundary decomposition. Moreover, $\cP_i^\circ$ and $\cQ_i^\circ$ have the same good minimal model and the same ample model, so we use the same diagram for them. If $\cP_j$ does not contain $\Phi_0$, then $\cP_j$ is a face of $\cQ_i$, and the conclusion follows from the induction hypothesis. Hence we may assume that $\cP_j$ contains $\Phi_0$. For $\Phi\in\cP_i$, write $L_i(\Phi)$ for the divisor induced by $K_X+\Phi$ on $Z_i$. We distinguish the following two cases.

\emph{Case 1.} Assume that $\cP_j=\{\Phi_0\}$. The construction in Step~2 gives a contraction $\alpha_{ij}:Z_i\to Z_j$ such that \[\alpha_{ij}\circ g_i\circ f_i=g_j\circ f_j\] and \[L_i(\Phi_0)\sim_\bR\alpha_{ij}^*H_j\] for some ample $\bR$-divisor $H_j$ on $Z_j$. Hence $L_i(\Phi_0)$ is semiample, and its semiample contraction is $\alpha_{ij}$.

\emph{Case 2.} Assume that $\cP_j\neq\{\Phi_0\}$, and set $\cQ_j:=\cP_j\cap\cQ_i$. Then $\cQ_j$ is a face of $\cQ_i$ and \[\cP_j=\operatorname{Conv}(\Phi_0,\cQ_j).\] By Step~2, $\cP_j^\circ$ and $\cQ_j^\circ$ have the same good minimal model and the same ample model, so we use the same diagram for them. The induction hypothesis applied to the face inclusion $\cQ_j\subseteq\cQ_i$ gives a contraction $\alpha_{ij}:Z_i\to Z_j$ such that \[\alpha_{ij}\circ g_i\circ f_i=g_j\circ f_j.\] Moreover, for every $\Psi\in\cQ_j^\circ$, there exists an ample $\bR$-divisor $H_j(\Psi)$ on $Z_j$ such that \[L_i(\Psi)\sim_\bR\alpha_{ij}^*H_j(\Psi).\]

Let $Z_0$ be the ample model corresponding to $\{\Phi_0\}$. Applying Case~1 to the inclusions $\{\Phi_0\}\subseteq\cP_i$ and $\{\Phi_0\}\subseteq\cP_j$, we obtain contractions $\alpha_{i0}:Z_i\to Z_0$ and $\alpha_{j0}:Z_j\to Z_0$. Since $g_i\circ f_i$ is surjective, the corresponding commutative relations give \[\alpha_{i0}=\alpha_{j0}\circ\alpha_{ij}.\] If $H_0$ denotes the ample $\bR$-divisor on $Z_0$ induced by $K_X+\Phi_0$, then \[L_i(\Phi_0)\sim_\bR\alpha_{i0}^*H_0=\alpha_{ij}^*\alpha_{j0}^*H_0.\]

Fix $\Phi\in\cP_j^\circ$. We may write $\Phi=\lambda\Phi_0+(1-\lambda)\Psi$ for some $0<\lambda<1$ and $\Psi\in\cQ_j^\circ$. Then we have \[L_i(\Phi)\sim_\bR \lambda L_i(\Phi_0)+(1-\lambda)L_i(\Psi)\sim_\bR\alpha_{ij}^*\big(\lambda\alpha_{j0}^*H_0+(1-\lambda)H_j(\Psi)\big).\] Since $\alpha_{j0}^*H_0$ is nef and $H_j(\Psi)$ is ample, the divisor in parentheses is ample. Thus, $L_i(\Phi)$ is the pullback under $\alpha_{ij}$ of an ample $\bR$-divisor on $Z_j$. In particular, it is semiample, and its semiample contraction is $\alpha_{ij}$.
\end{proof}

The following lemma transfers bigness from a closed fiber to the generic fiber uniformly over the parameter polytope, and will be used later in the proof of boundedness.

\begin{lem}\label{bigness on the generic fiber}
Let $\pi:\cX\to Z$ be a projective family of smooth surfaces over an integral variety $Z$ with generic point $\eta$, and let $\cC$ be a convex polytope of $\bR$-Cartier divisors on $\cX$. For any $L\in\cC$ and any point $z\in Z$, write $L_z:=L|_{\cX_z}$. Assume that
\begin{enumerate}
\item there exists $L_0\in\cC$ such that $L_{0,\eta}$ is big, and
\item there exist finitely many diagrams \[\cX_\eta\xrightarrow{f_i}Y_i\xrightarrow{g_i}T_i,\qquad 1\leq i\leq k,\] where $f_i$ is a birational morphism, $Y_i$ is $\bQ$-factorial, and $g_i$ is a contraction, such that for every $L\in\cC$ with $L_\eta$ pseudo-effective, one of these diagrams gives the good minimal model and the ample model of $L_\eta$.
\end{enumerate}
Then, after replacing $Z$ by a finite cover and shrinking it, the following holds: for every $L\in\cC$, if $L_z$ is big for some closed point $z\in Z$, then $L_\eta$ is big.

\begin{proof}
After shrinking $Z$, we may assume that for any $L\in \cC$, every irreducible component of $\Supp(L)$ dominates $Z$.

For every $i$ with $\dim T_i\leq1$, choose a covering family of curves on $Y_i$ contracted by $g_i$, and denote a general member by $C_i$. If $\dim T_i=1$, we may take the fibers of $g_i$, while if $\dim T_i=0$, we may take general members of the linear system of a very ample divisor on $Y_i$. After replacing $Z$ by a finite cover and shrinking it, we may extend all the diagrams and these covering families. In particular, we may assume that $f_{i,z}$ is birational and $C_{i,z}$ is movable for every closed point $z\in Z$.

Fix $L_1\in\cC$ and suppose that $L_{1,z}$ is big for some closed point $z\in Z$, but $L_{1,\eta}$ is not big. For $0\leq s\leq1$, set \[L_s:=sL_1+(1-s)L_0.\] Since $\cC$ is convex and $L_{0,\eta}$ is big, there exists a largest real number $0<\lambda\leq1$ such that $L_{\lambda,\eta}$ is pseudo-effective. Then $L_{\lambda,\eta}$ is pseudo-effective but not big.

Choose a diagram \[\cX_\eta\xrightarrow{f_i}Y_i\xrightarrow{g_i}T_i\] giving the good minimal model and the ample model of $L_{\lambda,\eta}$. Since $L_{\lambda,\eta}$ is not big, we have $\dim T_i\leq1$. Denote \[G_s:=(f_i)_*L_{s,\eta}.\] Since $g_i$ is the ample model of $L_{\lambda,\eta}$, we have 
$G_{\lambda}\cdot C_i=0$. On the other hand, $G_0$ is big and $C_i$ is movable, and hence $G_0\cdot C_i>0$. Since \[G_\lambda=\lambda G_1+(1-\lambda)G_0,\] it follows that \[G_1\cdot C_i\leq0.\]
After specialization to $z$, we obtain \[(f_{i,z})_*L_{1,z}\cdot C_{i,z}\leq0.\]
However, $(f_{i,z})_*L_{1,z}$ is big because $L_{1,z}$ is big and $f_{i,z}$ is birational. Since $C_{i,z}$ is movable, this gives \[(f_{i,z})_*L_{1,z}\cdot C_{i,z}>0,\] a contradiction. Therefore, $L_{1,\eta}$ is big.
\end{proof}
\end{lem}

\section{Boundedness of polarized foliated surfaces}\label{Section:Boundedness}

\subsection{Effective birationality}
We first establish an effective birationality result for polarized adjoint foliated surfaces, which will later be used to construct log bounded birational models. The main input is Birkar's effective birationality theorem for adjoint linear series \cite[Corollary 1.2]{Bir23GeometryPolarisedVarieties}.

\begin{prop}\label{effective birationality}
Let $\epsilon$ be a positive real number, and $I\subseteq \bQ\cap(0,1)$ be a finite set. Then there exist positive integers $m,n$ depending only on $\epsilon,I$ satisfying the following: Assume that 
\begin{enumerate}
    \item $(X,\cF,B,t)$ is an $\epsilon$-lc adjoint foliated surface with $t\in I$,
    \item the non-zero coefficients of $\nB$ are in $I$,
    \item $\KCF+\nB$ is pseudo-effective, and
    \item $N$ is a nef and big integral divisor on $X$.
\end{enumerate}
Then $|m(t(\KCF+\nB)+(1-t)K_X)+nN+L|$ defines a birational map for any pseudo-effective integral divisor $L$.

\begin{proof}
By Proposition~\ref{control singularities}, $X$ is $\epsilon$-lc.
Applying \cite[Corollary~1.2]{Bir23GeometryPolarisedVarieties},
there exist positive integers $m_0,l_0$, depending only on $\epsilon$, such that 
\[|m'K_X+l'N+P|\] defines a birational map for any positive
integers $m'\geq m_0$ and $l'\geq l_0m'$, and any pseudo-effective integral divisor $P$.

Since $I\subseteq\bQ\cap(0,1)$ is finite and the non-zero coefficients
of $\nB$ belong to $I$, we may choose a sufficiently divisible positive
integer $m$, depending only on $\epsilon,I$, such that, for every
$t\in I$, the divisor $mt(\KCF+\nB)$ is integral,
$m(1-t)\in\bN^{>0}$, and $m(1-t)\geq m_0$.
Set $n:=l_0m$.
Then $n\geq l_0m(1-t)$.
Moreover, \[P:=mt(\KCF+\nB)+L\] is a pseudo-effective integral divisor. It follows that
\[|m(t(\KCF+\nB)+(1-t)K_X)+nN+L|\]
defines a birational map.
\end{proof}
\end{prop}

As a consequence, we obtain effective birationality for adjoint foliated surfaces of general type, extending \cite[Theorem 1.4]{SS23EffectiveGenerationFoliatedSurfaces}.

\begin{cor}\label{effective birationality of general type}
    Let $\epsilon$ be a positive real number, and $I\subseteq \bQ\cap(0,1)$ be a finite set. Then there exists a positive integer $m$ depending only on $\epsilon,I$ satisfying the following: Assume that 
\begin{enumerate}
    \item $\fA:=(X,\cF,B,t)$ is an $\epsilon$-lc adjoint foliated surface with $t\in I$,
    \item the non-zero coefficients of $B$ are in $I$, and
    \item $\KFA$ is big.
\end{enumerate}
Then $|m\KFA|$ defines a birational map.

\begin{proof}
Let $f:X\to Y$ be the minimal model of $\KFA$ constructed by
Theorem~\ref{adjoint MMP}, and let
$\fB:=(Y,\cG,B_Y,t)$ be the pushforward of $\fA$.
Then $\fB$ is $\epsilon$-lc by the negativity lemma, and $\KFB$ is
nef and big.

Since the non-zero coefficients of $B$ and $t$ belong to the finite
set $I$, we may choose a positive integer $p$, depending only on $I$,
such that $p\KFA$, $p\KFB$, and $p(1-t)B_Y$ are integral.
By \cite[Lemma~5.3]{LX25NonalgebraictyNonabundant},
$K_{\cG}+B_Y^{\ninv}$ is pseudo-effective.

By Proposition~\ref{effective birationality}, there exist positive integers $m_0,n_0$,
depending only on $\epsilon,I$, such that
\[\big|m_0\big(t(K_{\cG}+B_Y^{\ninv})+(1-t)K_Y\big)+n_0p\KFB\big|\]
defines a birational map. Then 
\[|m_0p(t(\KCG+\nB_Y)+(1-t)(K_Y+B_Y))+n_0p^2\KFB|,\]
and hence \[|(m_0p+n_0p^2)\KFB|\]
defines a birational map. Since $\KFA\geq f^*\KFB$, it follows that $|m\KFA|$ defines a birational map, where $m=m_0p+n_0p^2$.
\end{proof}
\end{cor}

\subsection{Deriving boundedness from birational boundedness}

In this subsection, we prove the following theorem, which is the adjoint foliated surface version of \cite[Theorem 1.6]{HMX14ACCLCT}.

\begin{thm}\label{Deriving boundedness from birational boundedness}
    Let $\epsilon$ and $\delta$ be positive real numbers. Consider the set $\cP$ of $\fA=(X,\cF,B,t)$ such that 
    \begin{enumerate}
        \item $\fA$ is an $\epsilon$-lc adjoint foliated surface, 
        \item the non-zero coefficients of $B$ are $\geq \delta$,
        \item $\delta\leq t\leq 1-\delta$, and
        \item $\KFA$ is ample.
    \end{enumerate}
    If $\cP$ is log birationally bounded, then $\cP$ is a log bounded family.
\end{thm}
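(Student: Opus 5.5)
Following the sketch in the introduction, I would mimic the proof of \cite[Theorem~1.6]{HMX14ACCLCT}, using the tools of \S\ref{Section:Finiteness} in place of invariance of plurigenera. Fix one of the finitely many families $(\cX,\cE,\cG)\to Z$ from log birational boundedness. After stratifying $Z$ and taking a log resolution of $(\cX,\cE)$, I apply Lemma~\ref{delta-canonical family} with a small $\delta_0$ and then further blow-ups along multisections. The goal is that, over each stratum, every fiber $\cX_z$ is smooth, the foliation $\cG_z$ is $\delta_0$-canonical (hence lc), and $(\cX_z,\cE_z)$ is $\cG_z$-adapted log smooth with pairwise disjoint components.

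For $\fA=(X,\cF,B,t)\in\cP$ realised on the fiber $\cX_z$ via $\phi$, I set $B_z:=\phi^{-1}_*B+(1-\epsilon')\cE'_z$. Here $\cE'_z$ is the reduced exceptional part and $\epsilon'\ll\epsilon$. By Lemma~\ref{adjoint terminal}, together with the $\epsilon$-lc hypothesis and ampleness of $\KFA$, the adjoint foliated surface $(\cX_z,\cG_z,B_z,t)$ is klt. Its log canonical model is $X$: one compares discrepancies and applies the negativity lemma, exactly as in the classical case. The coefficients of $B$ need not lie in a finite set, so $(B_z,t)$ varies in a polytope $\cC$ of divisors supported on $\cE$, with $t\in[\delta,1-\delta]$ and coefficients in $[\delta,1-\epsilon']$. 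To make Theorem~\ref{finiteness of models} applicable on the generic fiber, I add a small general ample non-invariant divisor $A$ to the boundary. This is harmless because $\KFA$ is ample, so a small perturbation does not change the ample model.

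Next I apply Theorem~\ref{finiteness of models} and Corollary~\ref{finite model decomposition} on $\cX_\eta$ to get finitely many diagrams, compatible along faces. I need a point of $\cC$ that is big on $\cX_\eta$. For this I use \cite[Lemma~5.3]{LX25NonalgebraictyNonabundant} to replace $t$ by $1-\delta$, and Corollary~\ref{effective birationality of general type} together with semicontinuity of $h^0$. Then Lemma~\ref{bigness on the generic fiber} shows that every $(B_z,t)$ big on some closed fiber is big on $\cX_\eta$.

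The last step is to spread out the finitely many diagrams after a finite cover and shrinking of $Z$. Openness of ampleness is applied only at one chosen interior point of each cell, and ampleness at all other parameters is propagated by the face-compatibility argument: a positive combination of an ample divisor and a pullback of an ample divisor from a face model is ample. Hence, for every closed $z$ in the open set and every parameter, the fiber $Z_{i,z}$ of the spread-out ample model is the ample model of $(\cX_z,\cG_z,B_z,t)$. By uniqueness of ample models it is isomorphic to $X$, carrying the pushed-forward foliation and boundary support. Noetherian induction on the strata of $Z$ then gives log boundedness.

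I expect the main obstacle to be this uniform spreading. Verifying that on each fiber the spread-out morphisms really are $K$-negative birational contractions (and not merely good models over the generic point) requires controlling discrepancies fiberwise. I would first try to handle this by shrinking so that relative Picard numbers and exceptional loci are constant, and then checking negativity on a single generic fiber.
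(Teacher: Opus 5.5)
There is a genuine gap at the step ``its log canonical model is $X$''. Set $\fB_z:=(\cX_z,\cG_z,B_z,t)$. Comparing discrepancies and applying the negativity lemma only gives $p^*\KFA\le q^*K_{\fB_z}$. Equivalently, $a(D,\fB_z)\le a(D,\fA)\le 0$ for every prime divisor $D$ of $X$ that is exceptional over $\cX_z$.

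Log birational boundedness puts no constraint on such $D$; these are the curves of $X$ contracted by $\phi^{-1}$. When the inequality is strict, $q^*K_{\fB_z}-p^*\KFA$ is not $p$-exceptional, and $X$ need not be the ample model of $\fB_z$. For example, let $X\to\cX_z$ be the blow-up of a regular point of $\cG_z$ where non-invariant components of $B_z$ of total coefficient $1+c$ meet, with $0<c\ll 1$, $K_{\fB_z}$ ample, and $D\not\subset\Supp B$. Then $\KFA=\pi^*K_{\fB_z}-cD$ is ample, but the ample model of $\fB_z$ is $\cX_z$ itself.

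So one must first arrange that $\phi$ is a morphism. Your requirement that the components of $\cE_z$ be pairwise disjoint would achieve this: with $\delta_0$-canonicity it makes $\fB_z$ terminal by Lemma~\ref{adjoint terminal}(5)--(6), which is incompatible with $a(D,\fB_z)\le 0$. However, it cannot be arranged. $B_z$ puts coefficient $1-\epsilon'$ on every $\phi$-exceptional curve, and blowing up a point where two of them meet (for instance when $\phi$ contracts a chain) produces a new $\phi$-exceptional curve meeting both. Lemma~\ref{components not intersect} only separates the components of the crepant-pullback boundary, not those of $\cE$.

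The paper closes this gap in Step~2 of its proof. The divisors of $X$ exceptional over $\cY_z$ have non-positive discrepancy with respect to $(\cY_z,\cG_z,(1-\epsilon)\cE_z,t)$. Hence a crepant extraction of exactly these divisors exists by \cite[Lemma~5.4]{LX25NonalgebraictyNonabundant}, and Proposition~\ref{extract divisor in family} shows these extractions are log bounded. That proposition is where uniform terminality actually comes from. On the crepant model of Lemma~\ref{components not intersect}, openness of $\delta_1$-canonicity and Lemma~\ref{adjoint terminal} give terminality on every fiber, so all the extracted divisors already appear there. A relative adjoint MMP over the generic fiber then contracts the superfluous ones. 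After replacing the family, $f\colon\cY_z\to X$ is a morphism and $K_{\fC_z}=f^*\KFA+R_z$ with $R_z\ge 0$ and $f$-exceptional. This is what identifies $X$ as the ample model.

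A smaller point: adding a small ample $A$ to the boundary is not harmless as you claim. If $\cX_z\to X$ contracts a $K_{\fB_z}$-trivial curve outside the negative part, then $K_{\fB_z}+\alpha A_z$ is positive on it and has a different ample model. The paper instead writes the divisor at the big upper corner as $\sim_{\bR}J$ with $J\ge A$, and rewrites each parameter as a convex combination with that corner (Lemma~\ref{generic fiber gmm}). Then each $K_{\fA(B,t)}$ is $\bR$-linearly equivalent to a positive multiple of an adjoint divisor whose boundary contains a multiple of $A$, which leaves all models unchanged. The remaining steps of your outline follow the paper.
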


We begin with some preparations. The following resolution lemma will be applied to the generic fiber of the bounded family. It provides a foliation-adapted log smooth model on which the components of the boundary appearing in the pullback
formula are pairwise disjoint. After spreading out this model, the separation of these components will allow us to obtain terminal adjoint foliated structures on every fiber.

\begin{lem}\label{components not intersect}
Let $\fA:=(X,\cF,B,t)$ be a klt adjoint foliated surface with $t<1$. Then there exists a resolution $\pi:Y\to X$ satisfying the following:

Assume that $\cG$ is the pullback of $\cF$, and write $\fB:=(Y,\cG,D,t)$ to be an adjoint foliated surface such that
\[\KFB=\pi^*\KFA+F,\]
where $D,F$ are two unique effective $\bR$-divisors with no common components. Then $(Y,D)$ is $\cG$-adapted log smooth and the irreducible components of $\Supp D$ are pairwise disjoint.

\begin{proof}
Let $\mu:X'\to X$ be a log resolution of $(X,B)$ and let $\cF'$ be the pullback of $\cF$ such that
\begin{itemize}
\item $\cF'$ has reduced singularities, and
\item $(X',\mu_*^{-1}B+\Exc(\mu))$ is $\cF'$-adapted log smooth.
\end{itemize}
Take $\fA':=(X',\cF',B',t)$ to be the adjoint foliated surface such that
\[K_{\fA'}=\mu^*\KFA+N,\]
where $B',N$ are effective $\bR$-divisors with no common components. Since $\fA$ is klt, every coefficient of $B'$ is strictly less than one. In particular, $(X',B')$ is klt. Since $(X',B'^{\ninv})$ is $\cF'$-adapted log smooth and $\cF'$ has reduced singularities, it follows that $(\cF',B'^{\ninv})$ is canonical.

By \cite[Proposition~2.36]{KM98BirationalGeometry}, there exists a higher resolution $\nu:Y\to X'$ such that, writing
\[K_Y+D'=\nu^*(K_{X'}+B')+F',\]
where $D',F'$ are effective $\bR$-divisors with no common components, the irreducible components of $\Supp D'$ are pairwise disjoint. By the construction in \cite[Proposition~2.36]{KM98BirationalGeometry}, the morphism $\nu$ may be chosen as a sequence of blow-ups at intersection points of irreducible components of the boundaries appearing in the pullback formulas. Since $\cF'$ has reduced singularities and $(X',B')$ is $\cF'$-adapted log smooth, each of these blow-ups preserves foliation-adapted log smoothness. Thus, $(Y,D')$ is $\cG$-adapted log smooth, where $\cG$ is the pullback of $\cF'$.

Take $\fB:=(Y,\cG,D,t)$ to be the adjoint foliated surface such that
\[\KFB=\nu^*\mu^*\KFA+F,\]
where $D,F$ are effective $\bR$-divisors with no common components. We claim that
\[\Supp D\subseteq\Supp D'.\]

Let $E$ be an irreducible component of $\Supp D$. If $\nu_*E$ is a divisor on $X'$, then $E$ is also a component of $\Supp D'$ because $\nu_*D=\nu_*D'=B'$. Thus, we may assume that $E$ is $\nu$-exceptional. Since $(\cF',B'^{\ninv})$ is canonical, we have
\[a(E,\cF',B'^{\ninv})\geq 0.\]
By the construction of $\fA'$ and $D$, we have
\[a(E,\fA')\leq a(E,\fA)<0.\]
Since $t<1$, combining these two inequalities gives
\[a(E,X',B')<0.\]
Therefore, $E$ is an irreducible component of $\Supp D'$. This proves that $\Supp D\subseteq\Supp D'$.

Since $(Y,D')$ is $\cG$-adapted log smooth and the irreducible components of $\Supp D'$ are pairwise disjoint, the same holds for $(Y,D)$. Therefore, $\pi:=\mu\circ\nu$ is the desired resolution.
\end{proof}
\end{lem}

The following proposition allows us to preserve the log
boundedness of adjoint foliated surfaces after extracting divisors with non-positive discrepancies.

\begin{prop}\label{extract divisor in family}
Let $\epsilon,\delta$ be positive real numbers. Let $\cP$ be a log bounded set of $\epsilon$-lc adjoint foliated surfaces $(X,\cF,B,t)$ with $t\leq 1-\delta$. 

Assume that $\cQ$ is a set of adjoint foliated surfaces such that for every $\fA'=(X',\cF',B',t)\in \cQ$, there exist an adjoint foliated surface $\fA=(X,\cF,B,t)\in\cP$ and a birational morphism $g: X'\to X$ such that $\cF'$ is the pullback of $\cF$ and \[K_{\fA'}=g^*\KFA.\]
Then $\cQ$ is log bounded. 

\begin{proof}
\begin{enumerate} [label=\textsl{Step} \arabic{enumi}., wide=13pt, itemsep=13pt]
\item In this step, we construct the family $(\cY',\cG',\cE')\to Z$ from the log bounded set $\cP$ with the desired properties.

By assumption, there exist a projective morphism $\cY\to Z$, where $Z$ is of finite type, and a foliated couple $(\cG,\cE)$ on $\cY$ such that for every $\fA=(X,\cF,B,t)\in\cP$, there exists a closed point $z\in Z$ such that
\begin{itemize}
\item there is an isomorphism $f:\cY_z\to X$,
\item $f_*\cG_z\simeq\cF$, and
\item $\cE_z$ coincides with the support of $B$.
\end{itemize}
Let $Z_0\subseteq Z$ denote the set of such points. By Noetherian induction, after stratifying $Z$, taking a suitable foliated log resolution over the generic point of each stratum, and spreading it out, we may assume that there exists a birational morphism $\pi:(\cY',\cE')\to(\cY,\cE)$ over $Z$ such that
\begin{itemize}
\item $Z$ is integral,
\item $Z_0$ is Zariski dense in $Z$,
\item $\cE'$ coincides with the sum of the strict transform of $\cE$ and all $\pi$-exceptional divisors,
\item if $\cG'$ is the pullback of $\cG$ and $\eta$ is the generic point of $Z$, then $\cG'_\eta$ has canonical singularities, and
\item $(\cY'_{z'},\cE'_{z'})$ is $\cG'_{z'}$-adapted log smooth for every $z'\in Z$.
\end{itemize}

\item In this step, we construct a resolution $\mu:\cV\to\cY'$ such that the induced map $\tau:\cV_z\dashrightarrow X'$ is a birational morphism.

For $0\leq s\leq1-\delta$, set $\fB(s):=(\cY',\cG',(1-\epsilon)\cE',s)$. Let $\cY'_\eta$ be the generic fiber of $\cY'\to Z$, and denote by $\fB_\eta(s)$ the restriction of $\fB(s)$ to $\cY'_\eta$. Since $\cG'_\eta$ has canonical singularities, it follows from Lemma~\ref{adjoint terminal} that $\fB_\eta(0)$ and $\fB_\eta(1-\delta)$ are klt.

Applying the construction in Lemma~\ref{components not intersect} simultaneously to these two adjoint foliated surfaces, after replacing $Z$ by a finite cover and shrinking it, we obtain a resolution $\mu:\cV\to\cY'$. Let $\cH$ be the pullback of $\cG'$, and write $\fC(s):=(\cV,\cH,\cJ(s),s)$, where $\cJ(s)$ and $\cL(s)$ are effective $\bR$-divisors with no common components such that
\[K_{\fC(s)}=\mu^*K_{\fB(s)}+\cL(s).\]
The conclusions of Lemma~\ref{components not intersect} hold for $s=0$ and $s=1-\delta$ on every fiber, and $\cH_\eta$ has canonical singularities.

Since $\fC_\eta(0)$ and $\fC_\eta(1-\delta)$ are klt, all the coefficients of $\cJ_\eta(0)$ and $\cJ_\eta(1-\delta)$ are strictly smaller than one. As these two divisors have only finitely many components, we may choose a real number $\epsilon_1>0$ such that all their coefficients are at most $1-\epsilon_1$. Fix
\[0<\delta_1<\min\left\{\frac14,\frac{\epsilon_1\delta}{\epsilon_1\delta+1-\delta}\right\}.\]
Since $\cH_\eta$ has canonical singularities, it is $\delta_1$-canonical. By the openness of $\delta_1$-canonicity \cite[Lemma~7.3]{PS19EffectiveAlgebraicIntegration}, after shrinking $Z$, we may assume that $\cH_{z'}$ is $\delta_1$-canonical for every $z'\in Z$. Therefore, Lemma~\ref{adjoint terminal} implies that $\fC_{z'}(0)$ and $\fC_{z'}(1-\delta)$ are terminal. Since $a(E,\fC_{z'}(s))$ is concave in $s$ for every exceptional divisor $E$ over $\cV_{z'}$, it follows that $\fC_{z'}(s)$ is terminal for every $z'\in Z$ and every $0\leq s\leq1-\delta$.

We now fix $\fA'=(X',\cF',B',t)\in\cQ$ and the corresponding $\fA=(X,\cF,B,t)\in\cP$ and birational morphism $g:X'\to X$. Let $z\in Z_0$ be the corresponding point and set $h:=f\circ\pi_z:\cY'_z\to X$. Since $\fA$ is $\epsilon$-lc, we have
\[K_{\fB_z(t)}\geq h^*\KFA.\]
Let $E$ be a $g$-exceptional divisor. The equality $K_{\fA'}=g^*\KFA$ and the effectivity of $B'$ imply that $a(E,\fA)\leq0$, and hence $a(E,\fB_z(t))\leq0$. If $E$ is not a divisor on $\cV_z$, then
\[a(E,\fC_z(t))\leq a(E,\fB_z(t))\leq0,\]
contradicting the terminality of $\fC_z(t)$. Thus, every $g$-exceptional divisor is a divisor on $\cV_z$, and therefore the induced map $\tau:\cV_z\dashrightarrow X'$ is a birational morphism.

\item In this step, we use MMP to prove that $\cQ$ is log bounded.

After a further stratification and Noetherian induction, we may assume that the reduced sum of all $\tau$-exceptional divisors on $\cV_z$ is the restriction of a reduced divisor $\cN$ on $\cV$. We have
\[\Supp\cN\subseteq\Supp(\mu_*^{-1}\cE')\cup\Exc(\mu).\]
Since the divisor on the right has only finitely many irreducible components, there are only finitely many possibilities for $\cN$. It therefore suffices to treat each such possibility separately.

Let $\cJ'_z(t)$ and $\cL'_z(t)$ be effective $\bR$-divisors with no common components such that, setting $\fC'_z(t):=(\cV_z,\cH_z,\cJ'_z(t),t)$, we have
\[K_{\fC'_z(t)}=\tau^*K_{\fA'}+\cL'_z(t).\]
After a further stratification, we may extend these divisors and denote the resulting divisors on $\cV$ by $\cJ'(t)$ and $\cL'(t)$. Since $\cL'_z(t)$ is $\tau$-exceptional, it follows that $\Supp\cL'_z(t)\subseteq\Supp\cN_z$.

For a sufficiently small real number $\alpha>0$, set
\[\fD:=(\cV_\eta,\cH_\eta,\cJ'_\eta(t)+\alpha\cN_\eta,t).\]
Then $\fD$ is klt. Set
\[\cD_\eta:=\cL'_\eta(t)+\alpha(\cN_\eta^{\ninv}+(1-t)\cN_\eta^{\inv}).\]
Since $K_{\fA'}=g^*\KFA$, we have
\[\KFD\sim_{\bR,\cY_\eta}\cD_\eta.\]
The divisor $\cD_\eta$ is effective and exceptional over $\cY_\eta$. Since $t\leq1-\delta$, we have $\Supp\cD_\eta=\Supp\cN_\eta$.

Run a $\KFD$-MMP over $\cY_\eta$. It terminates with a relative minimal model. On the resulting model, the pushforward of $\cD_\eta$ is effective, nef, and exceptional over $\cY_\eta$, and hence it is zero by the negativity lemma. Therefore, every irreducible component of $\cN_\eta$ is contracted, and the output is precisely the contraction determined by $\tau$.

After replacing $Z$ by a finite cover, we may extend this sequence of contractions to obtain a birational morphism $\cV\to\widetilde{\cV}$ over $Z$. Let $\widetilde{\cH}$ and $\widetilde{\cJ'}(t)$ be the pushforwards of $\cH$ and $\cJ'(t)$, respectively. Then
\[(\widetilde{\cV}_z,\widetilde{\cH}_z,\widetilde{\cJ'}_z(t),t)\simeq(X',\cF',B',t).\]
Since there are only finitely many strata and finitely many possibilities for $\cN$, it follows that $\cQ$ is log bounded.
\end{enumerate}
\end{proof}
\end{prop}

The following lemma is a consequence of Theorem~\ref{finiteness of models}.

\begin{lem}\label{generic fiber gmm}
Let $0<\lambda_1<\lambda_2<1$ and $0<\mu_1<\mu_2<1$ be real numbers. Let $\cF$ be a rank one lc foliation on a projective surface $X$, and let $\Sigma$ be a reduced divisor on $X$ such that $(X,\Sigma)$ is $\cF$-adapted log smooth. Denote \[\fA(B,t):=(X,\cF,B,t).\] Assume that $K_{\fA(\mu_2\Sigma,\lambda_2)}$ is big. Then there exist finitely many diagrams \[X\xrightarrow{f_i}Y_i\xrightarrow{g_i}Z_i,\qquad 1\leq i\leq k,\] where $f_i$ is a birational morphism and $g_i$ is a contraction, satisfying the following:

If $\mu_1\Sigma\leq B\leq\mu_2\Sigma$, $\lambda_1\leq t\leq\lambda_2$, and $K_{\fA(B,t)}$ is pseudo-effective, then there exists an index $1\leq i\leq k$ such that $f_i:X\to Y_i$ is the good minimal model of $\fA(B,t)$ and $g_i\circ f_i:X\to Z_i$ is its ample model.
\end{lem}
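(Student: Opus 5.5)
We plan to deduce the lemma directly from Theorem~\ref{finiteness of models}. The only work is to manufacture the two hypotheses that theorem needs: a general ample $\cF$-non-invariant divisor $A$, and a polytope $\cC$ in the interior of $\cL_A(V)$ consisting of klt points with $\delta\leq t\leq 1-\delta$.

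\textbf{Step 1: produce $A$.} Since $K_{\fA(\mu_2\Sigma,\lambda_2)}$ is big, we write $K_{\fA(\mu_2\Sigma,\lambda_2)}\sim_\bR A'+E$ with $A'$ a general effective ample divisor, all of whose components are $\cF$-non-invariant and transverse to $\cF$ and to $\Sigma$, and with $E\geq 0$. The difficulty is that the parameters $(B,t)$ vary, while $A$ must be fixed. Following the proof of Corollary~\ref{ample model}, we absorb a small multiple $\eta$ of $A'+E$ into the boundary, converting the invariant part via the factor $\frac{1}{1-t}$. Concretely, for $(B,t)$ in the box we compare
\[K_{\fA(B,t)}\quad\text{with}\quad K_{\fA(B,t)}+\eta(K_{\fA(\mu_2\Sigma,\lambda_2)}).\]
The latter is an adjoint divisor with boundary containing $\eta A'$. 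Because $K_{\fA(\mu_2\Sigma,\lambda_2)}$ is a fixed divisor, the map
\[(B,t)\mapsto K_{\fA(B,t)}+\eta K_{\fA(\mu_2\Sigma,\lambda_2)}\]
is a rescaling within the same affine family. A cleaner way to organise this is to work in the $\Gamma$-coordinates of Corollary~\ref{finite model decomposition}, where
\[K_X+\Gamma(B,t)=\frac{1}{1-t}K_{\fA(B,t)}.\]
In these coordinates the $\Gamma$-image of the box is a polytope $\cP$ containing $\Gamma(\mu_2\Sigma,\lambda_2)$. Any point $\Phi\in\cP$ satisfies
\[K_X+\Phi\sim_\bR (1+\eta)\bigl(K_X+\Phi'\bigr)-(\text{pseudo-effective part}),\]
and we choose a slightly enlarged polytope $\cP'$ of points of the form
\[\Phi+\eta\,(K_X+\Gamma(\mu_2\Sigma,\lambda_2)).\]
Each such point is $\sim_\bR$ to a boundary containing $\eta A'$ and has the same models as $\Phi$ after rescaling. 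I expect this bookkeeping to be the main obstacle. One must check that adding $\eta(A'+E)$ preserves the klt condition uniformly on the compact box. This should follow for small $\eta$, since $E$ is fixed and the box is compact. One must also check that $\fA$ and the perturbed structure share good minimal and ample models. That holds only where $K_{\fA(B,t)}$ is pseudo-effective, via the identity
\[K+\eta K_0=(K+\eta K_0)\]
being a positive combination that does not change models only when $K_0$ is a multiple of $K$. So instead I would fix the perturbation globally: replace each $(B,t)$ by $(B+\eta\Gamma_A,t)$ with a single fixed $\Gamma_A$ built from $A'$, and cover the original box by finitely many such perturbations using compactness and the openness argument of Step~1 of Theorem~\ref{finiteness of models}.

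\textbf{Step 2: klt and interior conditions.} On the box
\[\mu_1\Sigma\leq B\leq\mu_2\Sigma,\qquad \lambda_1\leq t\leq\lambda_2,\]
Lemma~\ref{adjoint terminal} gives the following. Since $\cF$ is lc, $(X,\Sigma)$ is $\cF$-adapted log smooth, $t\leq\lambda_2<1$, and the coefficients are at most $\mu_2<1$, every $\fA(B,t)$ is $(1-\lambda_2)(1-\mu_2)$-lc, hence klt. The same holds on a slightly larger box, so the original box lies in the interior of $\cL_A(V)$. Taking $\delta=\min\{\lambda_1,1-\lambda_2\}$ then gives the $t$-bounds.

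\textbf{Step 3: apply the theorem.} Apply Theorem~\ref{finiteness of models} to each of the finitely many perturbed polytopes $\cC$. This yields finitely many diagrams $X\to Y_i\to Z_i$. For every pseudo-effective $(B,t)$ in the box, one of them gives the good minimal model and the ample model.
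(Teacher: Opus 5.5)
\textbf{There is a genuine gap in Step~1, and it carries through to Step~3.}

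Theorem~\ref{finiteness of models} only applies to structures whose boundary contains a fixed ample divisor $A$. So you must rewrite each $K_{\fA(B,t)}$, up to a positive scalar and $\sim_\bR$, as an adjoint divisor whose boundary contains a fixed multiple of $A$, without changing the divisor itself. Both of your perturbations change the divisor.

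\begin{itemize}
\item Write $\Gamma_0:=\Gamma(\mu_2\Sigma,\lambda_2)$. The divisor you attach to $\Phi+\eta(K_X+\Gamma_0)$ is $(K_X+\Phi)+\eta(K_X+\Gamma_0)$. This is $K_X+\Phi$ plus a big divisor, or equivalently $(1+\eta)$ times the divisor at the point $\frac{1}{1+\eta}\Phi+\frac{\eta}{1+\eta}\Gamma_0$, which is moved toward $\Gamma_0$.
\item Your fallback $(B,t)\mapsto(B+\eta\Gamma_A,t)$ adds an ample divisor to $K_{\fA(B,t)}$.
\end{itemize}

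Neither keeps the good minimal model or the ample model. If $K_{\fA(B,t)}$ is pseudo-effective but not big, its ample model is a curve or a point, while the perturbed divisor is big with ample model a surface. A non-pseudo-effective $K_{\fA(B,t)}$ may also become pseudo-effective. Compactness and the openness argument of Step~1 of Theorem~\ref{finiteness of models} do not rescue this. That argument shows nearby points have compatible models on a fixed model; it says nothing about invariance of models when an ample divisor is added. Relatedly, your Step~2 claim that the original box lies in $\cL_A(V)$ is not meaningful, since its boundaries are supported on $\Sigma$ and do not contain $A$.

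\textbf{The missing idea is to extract the big piece from $\Phi$ rather than add it.} Since $\lambda_1,\mu_1>0$ and $\Gamma_0$ is the top corner of the box, for small $\epsilon>0$ every $\Phi=\Gamma(B,t)$ in the box can be written as
\[\Phi=\epsilon\Gamma_0+(1-\epsilon)\Gamma(B',t'),\]
with $(B',t')$ in a slightly enlarged box $D\leq B'\leq\mu_2\Sigma$, $\lambda_1'\leq t'\leq\lambda_2$. This is what the furthermore part of Lemma~\ref{topology} provides. Choose $J\sim_\bR K_X+\Gamma_0$ with $J\geq A$. Since $K_X+\Gamma$ is affine in $\Gamma$, you get
\[K_X+\Gamma(B,t)\sim_\bR(1-\epsilon)\Bigl(K_X+\tfrac{\epsilon}{1-\epsilon}J+\Gamma(B',t')\Bigr)=\tfrac{1-\epsilon}{1-t'}K_{\fA(B'',t')},\]
where
\[B''=B'+\tfrac{\epsilon(1-t')}{1-\epsilon}J^{\ninv}+\tfrac{\epsilon}{1-\epsilon}J^{\inv},\]
whose non-invariant part is at least $\frac{\epsilon(1-\lambda_2)}{1-\epsilon}A$. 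This is the same divisor up to a positive scalar and $\sim_\bR$, so pseudo-effectivity, the good minimal model and the ample model are all unchanged.

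The klt check must then be done for the new boundaries, not the old box. Lemma~\ref{adjoint terminal} shows $\fA(\mu_2\Sigma,t)$ is $(1-\lambda_2)(1-\mu_2)$-lc for all $0\leq t\leq\lambda_2$. Shrink $\epsilon$ so that $\fA(\mu_2\Sigma+\frac{\epsilon}{1-\epsilon}J,t)$ is klt for all such $t$; since $B''\leq\mu_2\Sigma+\frac{\epsilon}{1-\epsilon}J$, each $\fA(B'',t')$ is klt. Then a single application of Theorem~\ref{finiteness of models} to the polytope of the $(B'',t')$, with $\delta=\min\{\lambda_1',1-\lambda_2\}$, finishes the proof.
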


\begin{proof}
Given an effective $\bR$-divisor $B$ supported on $\Sigma$ and a real number $0\leq t<1$, set \[\Gamma(B,t):=\frac{t}{1-t}\KCF+\frac{1}{1-t}\nB+\iB.\] Then \[K_{\fA(B,t)}=(1-t)(K_X+\Gamma(B,t)).\] Therefore, $K_{\fA(B,t)}$ and $K_X+\Gamma(B,t)$ have the same good minimal model and the same ample model whenever they exist.

Since \[K_X+\Gamma(\mu_2\Sigma,\lambda_2)=\frac{1}{1-\lambda_2}K_{\fA(\mu_2\Sigma,\lambda_2)}\] is big, we may choose an effective $\bR$-divisor $J$ such that \[K_X+\Gamma(\mu_2\Sigma,\lambda_2)\sim_\bR J\] and $J\geq A$, where $A$ is a general effective ample $\cF$-non-invariant $\bR$-divisor.

We regard the coefficient of $\KCF$ in $\Gamma(B,t)$ as an additional coordinate. Applying the furthermore part of Lemma~\ref{topology} to the polytope \[\{(B,t)\mid 0\leq B\leq\mu_2\Sigma,\ 0\leq t\leq\lambda_2\},\] we may choose a sufficiently small real number $\epsilon>0$, a real number $0<\lambda_1'<\lambda_1$, and an effective $\bR$-divisor $D\leq\mu_1\Sigma$ with $\Supp D=\Supp\Sigma$ such that \[\Gamma(\mu_1\Sigma,\lambda_1)=\epsilon\Gamma(\mu_2\Sigma,\lambda_2)+(1-\epsilon)\Gamma(D,\lambda_1').\]

By Lemma~\ref{adjoint terminal}, $\fA(\mu_2\Sigma,t)$ is $(1-\lambda_2)(1-\mu_2)$-lc for every $0\leq t\leq\lambda_2$. Thus, by choosing $\epsilon$ sufficiently small in the preceding paragraph, we may furthermore assume that \[\fA\left(\mu_2\Sigma+\frac{\epsilon}{1-\epsilon}J,t\right)\] is klt for every $0\leq t\leq\lambda_2$.

Now fix $\mu_1\Sigma\leq B\leq\mu_2\Sigma$ and $\lambda_1\leq t\leq\lambda_2$. Comparing the coordinates of $\Gamma(B,t)$, we may choose $B'$ and $t'$ satisfying \[D\leq B'\leq\mu_2\Sigma,\qquad \lambda_1'\leq t'\leq\lambda_2,\] such that \[\Gamma(B,t)=\epsilon\Gamma(\mu_2\Sigma,\lambda_2)+(1-\epsilon)\Gamma(B',t').\] It follows that \[K_X+\Gamma(B,t)\sim_\bR(1-\epsilon)\left(K_X+\frac{\epsilon}{1-\epsilon}J+\Gamma(B',t')\right).\]

Set \[B'':=B'+\frac{\epsilon(1-t')}{1-\epsilon}J^{\ninv}+\frac{\epsilon}{1-\epsilon}J^{\inv}.\] Then \[K_X+\frac{\epsilon}{1-\epsilon}J+\Gamma(B',t')=K_X+\Gamma(B'',t')=\frac{1}{1-t'}K_{\fA(B'',t')}.\] Consequently, \[K_{\fA(B,t)}\sim_\bR\frac{(1-t)(1-\epsilon)}{1-t'}K_{\fA(B'',t')}.\] In particular, $K_{\fA(B,t)}$ is pseudo-effective if and only if $K_{\fA(B'',t')}$ is pseudo-effective, and in this case $\fA(B,t)$ and $\fA(B'',t')$ have the same good minimal model and the same ample model. Since \[\frac{\epsilon(1-t')}{1-\epsilon}J^{\ninv}\geq\frac{\epsilon(1-\lambda_2)}{1-\epsilon}A,\] the result follows from Theorem~\ref{finiteness of models}.
\end{proof}

We are now ready to prove Theorem \ref{Deriving boundedness from birational boundedness}. The strategy of the proof is as follows: Starting from a log birationally bounded family, we first pass to a bounded family of models which extracts the exceptional
divisors with non-positive adjoint discrepancies. This allows us to realize the original adjoint foliated surfaces as log canonical models of members of a bounded family. We then apply Lemma~\ref{generic fiber gmm} to the generic fiber of this bounded family and hence prove boundedness.

\begin{proof}[Proof of Theorem~\ref{Deriving boundedness from birational boundedness}]
Possibly decreasing $\epsilon$ and $\delta$, we may assume that $\epsilon,\delta\in\bQ^{>0}$ and $\delta<\min\{\frac{1}{2},1-\epsilon\}$. We use Noetherian induction throughout the proof.

\begin{enumerate}[label=\textsl{Step} \arabic{enumi}., wide=13pt, itemsep=13pt]
\item In this step, we construct a family $(\cY,\cG,\cE)\to Z$ corresponding to the log birationally bounded set $\cP$ and then modify
this family by stratifying $Z$ and taking a suitable log resolution of $(\cY,\cE)$.

By assumption, there exist a projective morphism $\pi:\cY\to Z$, where $Z$ is of finite type, and a foliated couple $(\cG,\cE)$ on $\cY$ such that for every $\fA=(X,\cF,B,t)\in\cP$, there exist a closed point $z\in Z$ and a birational map $f:\cY_z\dashrightarrow X$ satisfying
\begin{itemize}
\item $f_*\cG_z\simeq\cF$, and
\item $\cE_z$ contains the support of $f_*^{-1}B$ and all $f$-exceptional divisors.
\end{itemize}
Let $Z_0\subseteq Z$ be the set of such points. By stratifying $Z$, replacing it by a finite cover, and taking suitable log resolutions, we may assume that
\begin{itemize}
\item $Z$ is integral and $Z_0$ is Zariski dense in $Z$,
\item $(\cY_z,\cE_z)$ is $\cG_z$-adapted log smooth for every $z\in Z$,
\item $\cG_z$ is lc for every $z\in Z$,
\item every irreducible component of $\cE$ dominates $Z$ and has geometrically irreducible fibers, and
\item for every $\fA=(X,\cF,B,t)\in\cP$ represented by $z\in Z_0$, the divisor $\cE_z$ is the sum of the support of $f_*^{-1}B$ and all $f$-exceptional divisors.
\end{itemize}
The third property follows from Lemma~\ref{delta-canonical family}, while the last property follows after treating the finitely many reduced subdivisors of $\cE$ separately.

\item In this step, we reduce to the case where $f:\cY_z\dashrightarrow X$ is a birational morphism.

Fix $\fA=(X,\cF,B,t)\in\cP$ and the corresponding birational map $f:\cY_z\dashrightarrow X$. Set \[\fB_z:=(\cY_z,\cG_z,(1-\epsilon)\cE_z,t).\] Let $p:W\to X$ and $q:W\to\cY_z$ be common resolutions. Since $\fA$ is $\epsilon$-lc, we have \[q_*p^*\KFA\leq K_{\fB_z}.\] Since $\KFA$ is ample, the negativity lemma gives \[p^*\KFA\leq q^*K_{\fB_z}.\] Therefore, every exceptional divisor of $f^{-1}$ has non-positive discrepancy with respect to $\fB_z$.

Since $(\cY_z,\cE_z)$ is $\cG_z$-adapted log smooth and $\cG_z$ is lc, Lemma~\ref{adjoint terminal} implies that $\fB_z$ is $\delta\epsilon$-lc. By \cite[Lemma~5.4]{LX25NonalgebraictyNonabundant}, there exists a birational morphism $\rho:Y'_z\to\cY_z$ which extracts exactly the exceptional divisors of $f^{-1}$. Let $\cG'_z$ be the pullback of $\cG_z$, and define $\fB'_z=(Y'_z,\cG'_z,B'_z,t)$ by \[K_{\fB'_z}=\rho^*K_{\fB_z}.\] The boundary $B'_z$ is effective because all the extracted divisors have non-positive discrepancies.

Proposition~\ref{extract divisor in family} shows that all such crepant extractions belong to a log bounded family. Replacing $(\cY,\cG,\cE)\to Z$ by this family, we may assume that $f:\cY_z\to X$ is a birational morphism. Repeating the constructions of Step~1, we may preserve all the properties arranged there.

\item In this step, we prove that the adjoint canonical divisor corresponding to the upper corner of the parameter polytope is big on the generic fiber.

For every $z\in Z_0$, we have \[K_{\fB_z}\geq f^*\KFA,\] and hence $K_{\fB_z}$ is big. By \cite[Lemma~5.3]{LX25NonalgebraictyNonabundant}, the divisor $K_{\cG_z}+(1-\epsilon)\cE_z^{\ninv}$ is pseudo-effective. Set \[\fB'_z:=(\cY_z,\cG_z,(1-\epsilon)\cE_z,1-\delta).\] 
Then $K_{\fB'_z}$ is big.

By Corollary~\ref{effective birationality of general type}, there exists a positive integer $M$, depending only on $\epsilon$ and $\delta$, such that $|MK_{\fB'_z}|$ defines a birational map for every $z\in Z_0$. Set \[\fB':=(\cY,\cG,(1-\epsilon)\cE,1-\delta).\] After a further stratification and shrinking $Z$, we may assume that $MK_{\fB'}$ is Cartier, the sheaf \[\cR:=\pi_*\cO_\cY(MK_{\fB'})\] is locally free, and its formation commutes with base change.

Since $Z_0$ is Zariski dense, we may choose $z\in Z_0$ in this open subset. The rational map 
\[\Psi:\cY\dashrightarrow\bP_Z(\cR)\]
restricts to the birational map defined by $|MK_{\fB'_z}|$ on $\cY_z$. Since birationality is an open condition, it follows that the restriction of $\Psi$ to the generic fiber is birational. Therefore, writing \[\fB'_\eta:=(\cY_\eta,\cG_\eta,(1-\epsilon)\cE_\eta,1-\delta),\] the divisor $K_{\fB'_\eta}$ is big.

\item In this step, we prove that every parameter which occurs on a fiber with big adjoint canonical divisor is also big on the generic fiber.

For $\delta\cE\leq\cD\leq(1-\epsilon)\cE$ and $\delta\leq s\leq1-\delta$, set \[\Gamma(\cD,s):=\frac{s}{1-s}\KCG+\frac{1}{1-s}\cD^{\ninv}+\cD^{\inv}.\] Regarding the coefficient of $\KCG$ as an additional coordinate, the set \[\cQ:=\{\Gamma(\cD,s)\mid\delta\cE\leq\cD\leq(1-\epsilon)\cE,\ \delta\leq s\leq1-\delta\}\] is a convex polytope in $\WDiv_{\bR}(\cY)\times \bR$. By Step~3, \[K_{\cY_\eta}+\Gamma((1-\epsilon)\cE_\eta,1-\delta)=\frac{1}{\delta}K_{\fB'_\eta}\] is big. Applying Lemma~\ref{generic fiber gmm} to $(\cY_\eta,\cG_\eta,\cE_\eta)$ with \[\mu_1=\lambda_1=\delta,\qquad \mu_2=1-\epsilon,\qquad \lambda_2=1-\delta,\] we obtain finitely many diagrams which give the good minimal model and the ample model of every pseudo-effective member of $K_{\cY_\eta}+\cQ_\eta$. Therefore, after replacing $Z$ by a finite cover and shrinking it, Lemma~\ref{bigness on the generic fiber} applied to the polytope $K_{\cY/Z}+\cQ$ shows that if one of its members is big on a closed fiber, then it is big on the generic fiber.

Fix $\fA=(X,\cF,B,t)\in\cP$, represented by a point $z\in Z_0$, and let $F_z$ be the reduced $f$-exceptional divisor. Set \[\cD_z:=f_*^{-1}B+(1-\epsilon)F_z.\] Then \[\delta\cE_z\leq\cD_z\leq(1-\epsilon)\cE_z.\] Since the components of $\cE$ have geometrically irreducible fibers, $\cD_z$ extends to a divisor $\cD$ on $\cY$ satisfying \[\delta\cE\leq\cD\leq(1-\epsilon)\cE.\] Set \[\fC_z:=(\cY_z,\cG_z,\cD_z,t),\qquad \fC_\eta:=(\cY_\eta,\cG_\eta,\cD_\eta,t).\] We have \[K_{\fC_z}=f^*\KFA+R_z\] for some effective $f$-exceptional divisor $R_z$, and hence $K_{\fC_z}$ is big. Thus, \[K_{\cY_z}+\Gamma(\cD_z,t)=\frac{1}{1-t}K_{\fC_z}\] is big. By the preceding paragraph, $K_{\cY_\eta}+\Gamma(\cD_\eta,t)$ is big, and therefore $K_{\fC_\eta}$ is big.

\item In this step, we extend the finitely many ample models and identify their fibers with the original adjoint foliated surfaces.

Applying Corollary~\ref{finite model decomposition} in place of Theorem~\ref{finiteness of models} in the proof of Lemma~\ref{generic fiber gmm}, we obtain a finite polyhedral decomposition of the pseudo-effective locus of $\cQ_\eta$ and finitely many diagrams \[\cY_\eta\xrightarrow{\overline f_i}\overline{\cW}_i\xrightarrow{\overline g_i}\overline{\cV}_i,\qquad 1\leq i\leq p,\] which are compatible along faces. After replacing $Z$ by a finite cover and shrinking it, we may extend these diagrams and all the contractions corresponding to inclusions of faces to obtain \[\cY\xrightarrow{f_i}\cW_i\xrightarrow{g_i}\cV_i.\] 

For each $i$, let $\cP_i$ be the polytope corresponding to the $i$-th diagram, fix a point $\Phi_i\in\cP_i^\circ$, and denote by $L_i(\Phi)$ the divisor induced by $K_{\cY_\eta}+\Phi$ on $\cV_{i,\eta}$. Spreading out the pullback identities at the vertices of the finitely many polytopes and the compatibility identities along their faces, and shrinking $Z$ further, we may assume that all these identities hold on every fiber. Since there are only finitely many $i$, openness of ampleness allows us to assume moreover that $L_{i,z}(\Phi_i)$ is ample for every $i$ and every $z\in Z$.

We claim by induction on $\dim\cP_i$ that $L_{i,z}(\Phi)$ is ample for every $\Phi\in\cP_i^\circ$. This is clear if $\dim\cP_i=0$. Otherwise, for $\Phi\neq\Phi_i$, we may write \[\Phi=\lambda\Phi_i+(1-\lambda)\Psi,\] where $0<\lambda<1$ and $\Psi\in\cP_j^\circ$ for some proper face $\cP_j$ of $\cP_i$. By compatibility along faces, \[L_{i,z}(\Psi)\sim_{\bR}\alpha_{ij,z}^*L_{j,z}(\Psi).\] By induction, $L_{j,z}(\Psi)$ is ample, and hence $L_{i,z}(\Psi)$ is nef. Therefore, \[L_{i,z}(\Phi)\sim_{\bR}\lambda L_{i,z}(\Phi_i)+(1-\lambda)L_{i,z}(\Psi)\] is ample. It follows from the corresponding pullback identity that $g_{i,z}\circ f_{i,z}$ is the ample model of $K_{\cY_z}+\Phi_z$ for every $\Phi\in\cP_i^\circ$ and every $z\in Z$.

Return to $\fA$ and $\cD$ chosen in Step~4, and set \[\Phi:=\Gamma(\cD_\eta,t).\] By Step~4, $K_{\cY_\eta}+\Phi$ is big. Choose a polytope whose relative interior contains $\Phi$, and write \[\varphi_i:=g_i\circ f_i:\cY\to\cV_i.\] Then $\varphi_{i,z}$ is the ample model of $\fC_z$. On the other hand, \[K_{\fC_z}=f^*\KFA+R_z,\] where $R_z\geq0$ is $f$-exceptional and $\KFA$ is ample. Hence $f:\cY_z\to X$ is also the ample model of $\fC_z$. By uniqueness of the ample model, \[(\cV_{i,z},(\varphi_i)_*\cG_z,(\varphi_i)_*\cD_z,t)\simeq(X,\cF,B,t).\]

After stratifying $Z$, the families $\cV_i\to Z$, together with the pushforwards of $\cG$ and $\cE$, form finitely many log bounded families. By Noetherian induction, repeating the argument over the complement of the open subset treated above proves that $\cP$ is log bounded.
\end{enumerate}
\end{proof}

\subsection{Boundedness of polarized foliated surfaces}
In this subsection we prove the boundedness result for polarized adjoint foliated surfaces, and then apply it to derive boundedness of adjoint foliated surfaces of general type. 

The following lemma gives a uniform degree bound for foliations, which will be used to control the induced foliations in a bounded family.
\begin{lem}\label{bound foliation}
Let $\delta,v$ be two positive real numbers. Then there exists a positive real number $u$ depending only on $\delta,v$ satisfying the following: Assume that
\begin{enumerate}
    \item $\fA:=(X,\cF,B,t)$ is a klt adjoint foliated surface with $t\geq \delta$,
    \item $\KFA$ is pseudo-effective,
    \item $f:X\to Z$ is a birational morphism,
    \item $\fB:=(Z,\cG,B_Z,t)$ is the pushforward of $\fA$ on $Z$,
    \item $A$ is a very ample divisor on $Z$, and
    \item $\vol(\KFA+f^*A)\leq v$.
\end{enumerate}
Then $\KCG\cdot A\leq u$.

\begin{proof}
Let $g:X\to Y$ be the minimal model of $\KFA$ over $Z$, which is constructed by Corollary~\ref{relative adjoint MMP}. Denote the induced morphism $Y\to Z$ by $h$. Let $\fC$ be the pushforward of $\fA$ on $Y$. Then $\fC$ is klt, and $\KFC$ is nef over $Z$. By boundedness of the length of extremal rays, it follows that $\KFC+4h^*A$ is globally nef. Therefore,
\begin{align*}
    (\KFB+4A)\cdot A&=(\KFC+4h^*A)\cdot h^*A\\
    &\leq \vol(\KFC+5h^*A)\\
    &=\vol(\KFA+5f^*A)\leq 25v.
\end{align*}
Since \[A^2\leq \vol(\KFA+f^*A)\leq v,\] it follows that $(Z,A)$ belongs to a log bounded family. Hence, there exists a positive real number $s$ depending only on $v$ such that $K_Z+sA$ is pseudo-effective. Then \[K_Z\cdot A\geq -sA^2\geq -sv.\]
Since \[(t\KCG+(1-t)K_Z)\cdot A\leq 25v,\] we have \[\KCG\cdot A\leq \frac{1}{t}((1-t)sv+25v)\leq \frac{(s+25)v}{\delta}.\]
\end{proof}
\end{lem}

The following lemma gives a uniform positive lower bound for the log canonical thresholds of the polarizations. It will allow us to incorporate a fixed
small multiple of the polarization into the boundary.

\begin{lem}\label{lower bound of lct}
Let $\epsilon,\delta,v$ be three positive real numbers. Then there exists a positive real number $\tau$ depending only on $\epsilon,\delta,v$ satisfying the following: Assume that
\begin{enumerate}
\item $\fA:=(X,\cF,B,t)$ is an $\epsilon$-lc adjoint foliated surface with $t\leq 1-\delta$ and nef $\KFA$,
\item $L$ is a nef effective $\bR$-divisor on $X$,
\item $f:(X,\cF)\dashrightarrow (Y,\cG)$ is a birational map, where $\cG$ is the pushforward of $\cF$ on $Y$,
\item $\Sigma$ is a reduced divisor on $Y$ containing the support of the strict transform of $B$ and $L$, and the exceptional divisors of $f^{-1}$,
\item $A$ is a very ample divisor on $Y$ such that $A\cdot \Sigma\leq v$, and $A\cdot \KCG\leq v$,
\item $\pi:W\to X$, $\mu:W\to Y$ are common resolutions, and
\item $\vol(\pi^*L+\mu^*A)\leq v$.
\end{enumerate}
Then $(X,\cF,B+\tau(\nL+\frac{1}{1-t}\iL),t)$ is lc.

\begin{proof}
\begin{enumerate} [label=\textsl{Step} \arabic{enumi}., wide=13pt, itemsep=13pt]
\item In this step, we replace $Y$ by a bounded smooth model.

Since $A^2\leq \vol(\pi^*L+\mu^*A)\leq v$, $A\cdot \Sigma\leq v$, and $A\cdot \KCG\leq v$, Proposition~\ref{bdd criterion} implies that $(Y,\cG,\Sigma\cup\Supp(A))$ belongs to a log bounded family. Let $g:Y'\to Y$ be a bounded log resolution of $(Y,\Sigma\cup\Supp(A))$, $\cG'$ be the pullback of $\cG$, and $\Sigma'$ be the sum of the strict transform of $\Sigma$ and the reduced $g$-exceptional divisors, such that $(Y',\Sigma')$ is $\cG'$-adapted log smooth. By Lemma~\ref{delta-canonical family}, after replacing $(Y',\cG',\Sigma')$ by a log bounded higher model, we may furthermore assume that $\cG'$ is lc.

By construction, there exist a fixed positive integer $s$ and a very ample divisor $A'$ on $Y'$ such that $sg^*A-A'$ is pseudo-effective. Replacing $W$ by a higher model, we may assume that $\mu:W\to Y$ factors through $Y'$. If we denote $W\to Y'$ by $\nu$, then 
\[\vol(\pi^*L+\nu^*A')\leq \vol(\pi^*L+s\mu^*A)\leq s^2v.\]
From now on, we replace $(Y,\cG,\Sigma)$ by $(Y',\cG',\Sigma')$, $A$ by $A'$, and $v$ by $s^2v$.

\item In this step, we show the existence of the uniform lower bound of log canonical thresholds.

Denote $L_Y:=\mu_*\pi^*L$. Then we have
\[L_Y\cdot A=\pi^*L\cdot\mu^*A\leq \vol(\pi^*L+\mu^*A)\leq v.\]
Hence, the coefficient of each component of $\Supp(L_Y)$ is bounded from above by $v$. Now we take \[\tau:=\frac{\epsilon\delta}{2v}.\]
Since $t\leq 1-\delta$, by Lemma~\ref{adjoint terminal}, it follows that \[\fB:=(Y,\cG,(1-\epsilon)\Sigma+\tau(\nL_Y+\frac{1}{1-t}\iL_Y),t)\] is an lc adjoint foliated surface. 
Let \[\fA':=(X,\cF,B+\tau(\nL+\frac{1}{1-t}\iL),t).\] Then we conclude that \[K_{\fA'}=\KFA+\tau L\] is nef. Since $\fA=(X,\cF,B,t)$ is $\epsilon$-lc, and since $\Sigma$ contains the support of the strict transform of $B$ and exceptional divisors, we have \[\mu_*\pi^*K_{\fA'}\leq \KFB.\] By the negativity lemma, it follows that \[\pi^*K_{\fA'}\leq \mu^*\KFB.\] Therefore, $\fA'$ is also lc and we finish the proof.
\end{enumerate}
\end{proof}
\end{lem}

With these preparations in place, we are now ready to prove the main boundedness theorem for polarized adjoint foliated surfaces.
\begin{thm}\label{bdd of surfaces}
    Let $\epsilon,\delta,v$ be positive real numbers, and $I\subset\bQ\cap(0,1)$ be a finite set. Consider adjoint foliated surfaces $\fA:=(X,\cF,B,t)$ and integral divisors $N$ on $X$ such that
    \begin{enumerate}
        \item $\fA$ is $\epsilon$-lc,
        \item $t\in I$,
        \item the non-zero coefficients of $B$ are $\geq \delta$,
        \item either $\nB\in I$, or there exists an effective $\bQ$-divisor $\Delta\leq \nB$ such that $\Delta\in I$ and $\KCF+\Delta$ is pseudo-effective,
        \item $\KFA$ is nef,
        \item $N$ is nef and big, and
        \item $\vol(\KFA+N)\leq v$.
    \end{enumerate}
    Then the set of such adjoint foliated surfaces $(X,\cF, B,t)$ forms a log bounded family. If in addition $N\geq 0$, then the set of such adjoint foliated surfaces $(X,\cF,B+N,t)$ forms a log bounded family.

\begin{proof}
\begin{enumerate} [label=\textsl{Step} \arabic{enumi}., wide=13pt, itemsep=13pt]
\item In Steps 1 and 2, we first consider the special case where $\KFA+N$ is ample. In this step, we construct a birational model $(Y,\cH,\Sigma)$ of $(X,\cF,B)$ which belongs to a log bounded family.

Possibly decreasing $\delta$, we may assume that $\delta\leq t\leq 1-\delta$. By \cite[Lemma 5.3]{LX25NonalgebraictyNonabundant}, $K_\cF+\nB$ is pseudo-effective. Hence if $\nB\in I$, we take $\Delta=\nB$. By Proposition~\ref{effective birationality}, there exist two positive integers $m,n$ depending only on $\epsilon,I$ such that \[|m(t(\KCF+\Delta)+(1-t)K_X)+nN|\] defines a birational map. Let $\pi:W\to X$ be a foliated log resolution of $(X,\cF,B)$ such that
\[\pi^*(m(t(\KCF+\Delta)+(1-t)K_X)+nN)\sim M+F,\] where $F$ is the fixed part, and $M$ is the movable part, which is base point free and defines a birational morphism $\mu:W\to Y$. Denote $A:=\mu_*M$. Then $A$ is a very ample divisor on $Y$. Let \[L:=\pi_*M+\pi_*F+mt(\nB-\Delta)+m(1-t)B\sim m\KFA+nN.\] Then $L$ is an ample effective $\bR$-divisor on $X$, and $\vol(L)$ is bounded from above. Let $H\in |12M|$ be a general element in the linear system and $E=\on{Exc}(\pi)$. Define 
\[G:=\delta\red(\pi_*^{-1}B+F+E)+\frac{1}{2}H,\text{ and } \Sigma:=\Supp(G).\] 
Replacing $W$ by a higher model, we may assume that $(W,\Sigma)$ is log smooth. 

By \cite[Lemma 2.46]{Bir19AntiPluricanonicalSystemsFano}, $K_W+G$ is big. By \cite[Lemma 7.3]{HMX14ACCLCT}, there exists a positive real number $\lambda$ depending only on $\delta$ such that $K_W+\lambda G$ is big. Since the non-zero coefficients of $B$ are $\geq \delta$ and $\pi_*F$ is an integral divisor, we have 
\[\pi_*G=\delta\red(B+\pi_*F)+\frac{1}{2}\pi_*H\leq B+\pi_*F+\frac{1}{2}\pi_*H.\]
Choose $c>0$ such that \[1+c=\frac{1}{\delta}+\lambda c.\] Then we have 
\begin{align*}
\vol(K_W+\Sigma+10M)&\leq \vol(K_W+\frac{1}{\delta} G+10M+c(K_W+\lambda G))\\
&=\vol((1+c)(K_W+G)+10M)\\
&\leq \vol((1+c)(K_X+\pi_*G)+10\pi_*M)\\
&\leq \vol((1+c)(K_X+B+\pi_*F+6\pi_*M)+10\pi_*M)\\
&\leq \vol((1+c)(\frac{1}{m\delta}L+6L)+10L)
\end{align*}
is bounded from above. Let $\Sigma_Y:=\mu_*\Sigma$. By \cite[Lemma 3.2]{HMX13OnBirationalAutomorphisms},
\begin{align*}
    \Sigma_Y\cdot A=\Sigma\cdot M\leq \frac{2}{5}\vol(K_W+\Sigma+10M)
\end{align*}
is bounded from above. Define
\[\fB:=(W,\cG,\pi_*^{-1}B+(1-\epsilon)E,t),\] where $\cG$ is the pullback of $\cF$. Let $\cH$ be the pushforward of $\cG$ on $Y$. Since 
\[\vol(\KFB+\mu^*A)=\vol(\KFB+M)\leq \vol((m+1)\KFA+nN)\] is bounded from above, by Lemma~\ref{bound foliation}, it follows that $\KCH\cdot A$ is bounded from above. Therefore, applying Proposition~\ref{bdd criterion}, we conclude that $(Y,\cH,\Sigma_Y)$ belongs to a log bounded family. Replacing $\Sigma_Y$ by $\Supp(\Sigma_Y+A)$, we may assume that $\Sigma_Y$ contains $A$.

\item In this step, we prove the log boundedness of $(X,\cF,\Supp B)$ when $\KFA+N$ is ample.

Since \[\vol(\pi^*L+\mu^*A)\leq \vol(2L)\] is bounded from above, by Lemma~\ref{lower bound of lct}, there exists a fixed positive real number $\tau$ such that \[\fA':=(X,\cF,B+\tau(\nL+\frac{1}{1-t}\iL),t)\] is lc. Replacing $\tau$ by $\frac{\tau}{2}$, we may assume that $\fA'$ is $\frac{\epsilon}{2}$-lc. Since \[K_{\fA'}=\KFA+\tau L\] is ample, it follows from Theorem~\ref{Deriving boundedness from birational boundedness} that $\fA'$ belongs to a log bounded family. If moreover $N$ is an effective integral divisor, then by replacing $n$ with $n+1$ in Step 1, we may assume that $L\geq N$. Therefore, the log boundedness of $(X,\cF,\Supp(B+L))$ implies the log boundedness of $(X,\cF,\Supp(B+N))$.

\item In this step, we consider the general case where $\KFA+N$ is only nef and big.

By the above argument, \[\fA'=(X,\cF,B+\tau(\nL+\frac{1}{1-t}\iL),t)\] is $\frac{\epsilon}{2}$-lc, and $K_{\fA'}$ is nef and big. Let $h:X\to Z$ be the log canonical model of $\fA'$, which is constructed by Corollary~\ref{ample model}. Let $\fC$ be the pushforward of $\fA'$ on $Z$. Since \[K_{\fA'}=\KFA+\tau L=(m\tau+1)\KFA+n\tau N\]
and since $\KFA$ and $N$ are nef, it follows that $\KFA$ and $N$ are numerically trivial over $Z$. Therefore, by the negativity lemma, $\KFA$ and $N$ are $\bR$-linear trivial over $Z$. Then by Steps 1 and 2, $\fC$ belongs to a log bounded family. Since \[K_{\fA'}=h^*\KFC,\]
$h:X\to Z$ only extracts exceptional divisors with non-positive discrepancies with respect to $\fC$. Therefore, by Proposition~\ref{extract divisor in family}, $\fA'$ belongs to a log bounded family. In particular, $(X,\Supp(B+N))$ belongs to a log bounded family if $N$ is effective, as we can assume that $L\geq N$ as in Step 2. This completes the proof.
\end{enumerate}
\end{proof}
\end{thm}

As an application, we take the polarization to be a fixed multiple of the adjoint canonical divisor, and derive boundedness of adjoint foliated surfaces of general type with bounded volume.

\begin{cor}\label{bdd of surfaces of general type}
    Let $\epsilon,v$ be positive real numbers, and $I\subset\bQ\cap(0,1)$ be a finite set. Consider adjoint foliated surfaces $\fA:=(X,\cF,B,t)$ such that
    \begin{enumerate}
        \item $\fA$ is $\epsilon$-lc,
        \item $t\in I$, and the non-zero coefficients of $B$ are in $I$,
        \item $\KFA$ is nef and big, and
        \item $\vol(\KFA)\leq v$.
    \end{enumerate}
    Then the set of such adjoint foliated surfaces $(X,\cF,B,t)$ forms a log bounded family.
\end{cor}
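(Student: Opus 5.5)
The plan is to deduce Corollary~\ref{bdd of surfaces of general type} directly from Theorem~\ref{bdd of surfaces} by choosing the polarization to be a fixed integral multiple of $\KFA$. Since $I\subset\bQ\cap(0,1)$ is finite, we fix a positive integer $p$, depending only on $I$, such that $pt\in\bZ$, $p(1-t)\in\bZ$ and $p\,b\in\bZ$ for every $t\in I$ and every coefficient $b\in I$. Then $pt(\KCF+\nB)+p(1-t)(K_X+B)=p\KFA$ is an integral Weil divisor; it is $\bQ$-Cartier because $\KFA$ is $\bR$-Cartier with rational coefficients, and $X$ is $\bQ$-factorial by Proposition~\ref{control singularities}. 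We then set $N:=p\KFA$.

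We next check the hypotheses of Theorem~\ref{bdd of surfaces} with $\epsilon$ and $I$ unchanged, with $\delta:=\min I$, and with $v':=(p+1)^2v$. Conditions (1), (2) and (5) hold by assumption, and condition (3) holds since the nonzero coefficients of $B$ lie in $I$ and are therefore at least $\min I$. For condition (4), the nonzero coefficients of $\nB$ are among those of $B$, so $\nB\in I$ and the first alternative applies. Condition (6) holds because $N=p\KFA$ is nef and big. For condition (7), homogeneity of the volume gives
\[
\vol(\KFA+N)=\vol\bigl((p+1)\KFA\bigr)=(p+1)^2\vol(\KFA)\leq (p+1)^2v.
\]
Theorem~\ref{bdd of surfaces} then gives log boundedness of the structures $(X,\cF,B,t)$.

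The only step needing care is the integrality of $N$, since Theorem~\ref{bdd of surfaces} requires $N$ to be an integral divisor. The worry is that $p\KFA$ involves $p t\KCF$ and $p(1-t)K_X$ together with boundary terms, and each of these must have integral coefficients. The choice of $p$ above clears all denominators coming from $t$ and from the coefficients of $B$, including those of $\nB$. Since $\KCF$ and $K_X$ are Weil divisors, $p\KFA$ is integral. No further difficulty is expected.
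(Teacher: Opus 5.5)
Your proof follows the paper's argument: set $N:=p\KFA$ with $p$ depending only on $I$, note that $\vol(\KFA+N)=(p+1)^2\vol(\KFA)\leq(p+1)^2v$, and apply Theorem~\ref{bdd of surfaces}; your explicit check of conditions (3) and (4), with $\delta=\min I$ and $\Delta=\nB$, correctly spells out what the paper leaves implicit. One small arithmetic fix is needed: since $\KFA=t\KCF+(1-t)K_X+\nB+(1-t)\iB$, an $\cF$-invariant component of $B$ with coefficient $b$ appears in $p\KFA$ with coefficient $p(1-t)b$, and your conditions $pt,\,p(1-t),\,pb\in\bZ$ do not make this an integer (e.g.\ $t=b=\tfrac12$, $p=2$), so choose $p$ to also clear the denominators of all products $(1-t)b$, for instance by replacing $p$ with $p^2$.
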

\begin{proof}
    Since $t\in I$ and $B\in I$, there exists a positive integer $p$ depending only on $I$ such that $N:=p\KFA$ is integral. Moreover, we have \[\vol(\KFA+N)\leq (p+1)^2v.\]
    Therefore, the corollary follows from Theorem~\ref{bdd of surfaces}.
\end{proof}

\subsection{Applications to volumes}
We conclude this section with two applications to volumes. The first one gives a uniform positive lower bound of volumes of adjoint foliated surfaces of general type. 

\begin{cor}\label{lower bound of volume of general type}
Let $\epsilon$ be a positive real number, and $I\subset \bQ\cap(0,1)$ be a finite set. Then there exists a positive real number $v$ depending only on $\epsilon,I$ satisfying the following: 

If $\fA:=(X,\cF,B,t)$ is an $\epsilon$-lc adjoint foliated surface of general type such that $t\in I$ and the non-zero coefficients of $B$ are in $I$, then \[\vol(\KFA)\geq v.\]

\begin{proof}
This is an immediate consequence of Corollary~\ref{effective birationality of general type}.
\end{proof}
\end{cor}

The following result is in the same spirit as Jiao's discreteness theorem for volumes of integral divisors on Calabi--Yau type varieties \cite[Theorem~1.1]{Jiao25Discretenessvolume}. In our setting, the divisor whose volume is considered is the canonical divisor of an arbitrary rank one foliation on a Calabi--Yau adjoint foliated surface.

\begin{cor}\label{discreteness of volume of CY}
Let $\epsilon,\delta$ be two positive real numbers, and $I\subset \bQ\cap(0,1)$ be a finite set. Then there exists a discrete set $J\subset \bQ^{>0}$ depending only on $\epsilon,\delta,I$ satisfying the following. Assume that
\begin{enumerate}
\item $\fA:=(X,\cF,B,t)$ is an $\epsilon$-lc adjoint foliated surface with $t\in I$,
\item $\KFA\sim_{\bR}0$,
\item the non-zero coefficients of $B$ are $\geq \delta$,
\item either $\nB\in I$, or there exists an effective $\bQ$-divisor $\Delta\leq \nB$ such that $\Delta\in I$ and $\KCF+\Delta$ is pseudo-effective, and
\item $\cG$ is a rank one foliation on $X$.
\end{enumerate}
Then $\vol(\KCG)\in J\cup\{0\}$. In particular, if $\KCG$ is big, then $\vol(\KCG)$ is bounded from below by $\min J$.

\begin{proof}
We prove the stronger statement that the positive volumes of all integral divisors on $X$ form a discrete set depending only on $\epsilon, \delta, I$.

Let $D$ be an integral divisor on $X$. If $D$ is not big, then $\vol(D)=0$ and we are done. Now we assume that $D$ is big. It suffices to prove that for any positive real number $v$, if $\vol(D)\leq v$, then $\vol(D)$ is in a finite set.

Let $L\in |D|_\bQ$ be a general element. Take $\tau$ to be a sufficiently small positive real number such that \[\fB:=(X,\cF,B+\tau(\nL+\frac{1}{1-t}\iL),t)\] is klt. By Theorem~\ref{adjoint MMP}, we can run a $\KFB$-MMP, which is also a $D$-MMP. Let $f:X\to X'$ be the minimal model of $\KFB$. Let $\fA':=(X',\cF',B',t)$ be the pushforward of $\fA$, $D'$ be the pushforward of $D$ on $X'$. Then $D'$ is nef and big.

Since $\fA$ is Calabi--Yau, it follows that $\KFA=f^*K_{\fA'}$. Hence $\fA'$ is also an $\epsilon$-lc Calabi--Yau adjoint foliated surface. Then it follows from Proposition~\ref{control singularities} that $X'$ is $\epsilon$-lc. By Theorem~\ref{bdd of surfaces}, $X'$ belongs to a bounded family $\cP$ depending only on $\epsilon,\delta,v,I$. Therefore, by \cite[Theorem 1.2]{HJ26TotalCartierIndexBoundedFamily}, there exists a positive integer $N$ depending only on $\cP$, hence depending only on $\epsilon,\delta,v,I$, such that $ND'$ is Cartier. Thus $N^2\vol(D')\in \bZ^{>0}$, and hence \[\vol(D)=\vol(D')\] belongs to a finite set depending only on $\epsilon,\delta,v,I$.

We now take $D$ to be $\KCG$ and this completes the proof.
\end{proof}
\end{cor}

\bibliography{pfs}
\bibliographystyle{alphaurl}
\end{document}